\colorlet{mylinkcolor}{Blue}
\colorlet{mycitecolor}{Maroon}
\colorlet{myurlcolor}{PineGreen}
\newcommand{\Ab}{\operatorname{Ab}}
\newcommand{\Ann}{\operatorname{Ann}\nolimits}
\newcommand{\Zg}{\operatorname{Zg}\nolimits}
\newcommand{\Sp}{\operatorname{Sp}\nolimits}
\newcommand{\Cone}{\operatorname{Cone}\nolimits}
\newcommand{\op}{\operatorname{op}\nolimits}
\newcommand{\fp}{\operatorname{fp}\nolimits}
\newcommand{\Flat}{\operatorname{Flat}\nolimits}
\newcommand{\End}{\operatorname{End}\nolimits}
\newcommand{\Hom}{\operatorname{Hom}\nolimits}
\newcommand{\Imm}{\operatorname{Im}\nolimits}
\newcommand{\cper}{\operatorname{per }\nolimits}
\newcommand{\Ker}{\operatorname{Ker}\nolimits}
\newcommand{\Mor}{\operatorname{Mor}\nolimits}
\newcommand{\Ext}{\operatorname{Ext}\nolimits}
\newcommand{\Mod}{\operatorname{Mod}\nolimits}
\renewcommand{\mod}{\operatorname{mod}\nolimits}
\newcommand{\Z}{\operatorname{\mathbb{Z}}\nolimits}
\newcommand{\bbD}{\mathop{}\!\mathbb{D}}
\newcommand{\N}{\operatorname{\mathbb{N}}\nolimits}
\newcommand{\R}{\operatorname{\mathbb{R}}\nolimits}
\newcommand{\Obj}{\operatorname{Ob}\nolimits}
\newcommand{\length}{\operatorname{length}\nolimits}
\newcommand{\id}{\mathbf{1}}
\newcommand{\arr}[1]{\stackrel{sharp1}{\rightarrow}}
\newcommand{\larr}[1]{\stackrel{sharp1}{\leftarrow}}
\newcommand{\ca}{{\mathcal A}}
\newcommand{\cA}{{\mathcal A}}
\newcommand{\cb}{{\mathcal B}}
\newcommand{\cc}{{\mathcal C}}
\newcommand{\cC}{{\mathcal C}}
\newcommand{\cd}{{\mathcal D}}
\newcommand{\cE}{{\mathcal E}}
\newcommand{\ce}{{\mathcal E}}
\newcommand{\cF}{{\mathcal F}}
\newcommand{\CK}{{\mathcal K}}
\newcommand{\co}{{\mathcal O}}
\newcommand{\cS}{{\mathcal S}}
\newcommand{\ct}{{\mathcal T}}
\newcommand{\cT}{{\mathcal T}}
\newcommand{\cx}{{\mathcal X}}
\newcommand{\D}{{\operatorname{D}}}
\renewcommand{\tilde}[1]{\widetilde{sharp1}}
\newcommand{\ul}[1]{\underline{sharp1}}
\newcommand{\ol}[1]{\overline{sharp1}}
\renewcommand{\hat}[1]{\widehat{sharp1}}
\newcommand{\emphbf}[1]{\emph{\textbf{#1}}}
\newtheorem{theorem}{Theorem}[section]
\newtheorem{corollary}[theorem]{Corollary}
\newtheorem{lemma}[theorem]{Lemma}
\newtheorem{proposition}[theorem]{Proposition}
\newtheorem{thmx}{Theorem}
\theoremstyle{definition}
\newtheorem{definition}[theorem]{Definition}
\newtheorem{example}[theorem]{Example}
\newtheorem{remark}[theorem]{Remark}
\newtheorem{assumption}[theorem]{Setup}
\numberwithin{equation}{section}
\begin{document}

\title{A functorial approach to rank functions on triangulated categories}

\author{Teresa Conde}
\address{Teresa Conde, Fakult\"at f\"ur Mathematik, Universit\"at Bielefeld, Universitätsstra{\ss}e 25, 33615 Bielefeld, Germany}
\email{tconde@math.uni-bielefeld.de}
\author{Mikhail Gorsky}
\address{Mikhail Gorsky, Fakult\"at f\"ur Mathematik, Universit\"at Wien, Oskar-Morgenstern-Platz 1, 1090 Wien, Austria}
\email{mikhail.gorskii@univie.ac.at}
\author{Frederik Marks}
\address{Frederik Marks, Institut f\"ur Algebra und Zahlentheorie, Universit\"at Stuttgart, Pfaffenwaldring 57, 70569 Stuttgart, Germany}
\email{marks@mathematik.uni-stuttgart.de}
\author{Alexandra Zvonareva}
\address{Alexandra Zvonareva, Institut f\"ur Algebra und Zahlentheorie, Universit\"at Stuttgart, Pfaffenwaldring 57, 70569 Stuttgart, Germany and Institute of Mathematics, Czech Academy of Sciences,  \v{Z}itn\'a 25,   115 67   Praha 1,   Czech Republic}
\email{alexandra.zvonareva@mathematik.uni-stuttgart.de, zvonareva@math.cas.cz}

\keywords{rank function, additive function, triangulated category, functor category, localisation}
\subjclass[2020]{16B50,16E45,18A25,18E10,18E35,18G80}

\maketitle
\begin{center}
    \emph{Dedicated to Henning Krause on the occasion of his 60th birthday}
\end{center}

\begin{abstract}
We study rank functions on a triangulated category $\cC$ via its abelianisation $\mod\cC$. We prove that every rank function on $\cC$ can be interpreted as an additive function on $\mod\cC$. As a consequence, every integral rank function has a unique decomposition into irreducible ones. Furthermore, we relate integral rank functions to a number of important concepts in the functor category $\Mod\cC$.  
We study the connection between rank functions and functors from $\cC$ to locally finite triangulated categories, generalising results by Chuang and Lazarev. 
In the special case $\cC=\cT^c$ for a compactly generated triangulated category $\cT$, 
this connection becomes particularly nice, providing a link between rank functions on $\cC$ and smashing localisations of $\cT$. In this context,
any integral rank function can be described using the composition length with respect to certain endofinite objects in $\cT$.
Finally, if $\cC=\cper (A)$ for a differential graded algebra $A$, we classify homological epimorphisms $A\to B$ with $\cper (B)$ locally finite via special rank functions which we call idempotent.
\end{abstract}

\section{Introduction}
\label{sec:introduction}

Rank functions on triangulated categories were recently introduced  by Chuang and Lazarev in \cite{Chuang2021}. Roughly speaking, for a triangulated category $\cC$, a rank function on $\cC$ is an  assignment of a non-negative real number (or an element of another partially ordered group) to every object of $\cC$, satisfying certain axioms. Rank functions can be defined alternatively on morphisms of $\cC$.
The motivation of Chuang and Lazarev to introduce and study rank functions stemmed from the work by Cohn and Schofield on Sylvester rank functions (\cite{Cohn2008,Schofield1985}), which are defined on the category of finitely presented modules over a ring $A$ and can be used to classify ring morphisms from $A$ into simple artinian rings or skew-fields. Each Sylvester rank function naturally extends to a rank function on the perfect derived category $\cper (A)$, but not all rank functions on $\cper (A)$ arise in this way.
Another standard example of a rank function is the dimension of the total cohomology of an object in the bounded derived category of a finite-dimensional algebra. Mass functions associated with  Bridgeland stability conditions provide further interesting examples (\cite{Chuang2021,Ikeda2021h}). 

Developing the analogy between rank functions on triangulated categories and Sylvester rank functions, Chuang and Lazarev established a connection between rank functions and functors to simple triangulated categories, that is, triangulated categories with an indecomposable generator such that any triangle is a direct sum of split triangles. In particular, in \cite[Theorem 6.4]{Chuang2021} they obtained a bijection between thick subcategories $\CK$ of $\cC$ with simple Verdier quotient $\cC/\CK$ and localising prime rank functions on $\cC$, that is, normalised rank functions with integral values, subject to rather strong restrictions. In case $\cC$ is the perfect derived category $\cper (A)$ of a dg algebra $A$, these two classes are also in bijection with equivalence classes of finite homological epimorphisms from $A$ to simple artinian dg algebras (\cite[Theorem 6.5]{Chuang2021}).

The point of view we are suggesting in this paper is that  of  functor categories. Functorial methods have been successfully applied in representation theory in the past. Considering the category of 
contravariant
additive functors $\Mod(\mod A)$ from the category of finitely presented modules $\mod A$ over an Artin algebra $A$ to the category of abelian groups led to classic contributions such as Auslander--Reiten theory and Auslander correspondence, and to modern trends such as higher Auslander correspondence and higher-dimensional homological algebra. In the context of triangulated categories, the functorial approach led to important developments such as purity and Ziegler spectra for triangulated categories. 

We start by reinterpreting a rank function $\rho$ on $\cC$ as a function $\widetilde{\rho}$ on the category $\mod\cC$ of finitely presented additive functors from 
$\cC^{\op}$
to abelian groups. The function $\widetilde{\rho}$ turns out to be additive on short exact sequences and invariant under the action of translation induced from  $\cC$. This reinterpretation is our first main result, which allows us to use the deeply developed theory of additive functions on abelian categories.

\begin{thmx}[Theorem \ref{thm:TransInvariantAddFunctions<->rankFunctions}]
\label{theorem:translationResult}
	Let $\cC$ be a skeletally small triangulated category. There is a bijective correspondence between:
	\begin{enumerate}[(a)]
		\item translation-invariant additive functions $\widetilde{\rho}$ on $\mod \cC$;
		\item rank functions $\rho$ on $\cC$.
	\end{enumerate}
	The correspondence maps an additive function $\widetilde{\rho}$ to the rank function $\rho$ given by $\rho(f)\coloneqq\widetilde{\rho}(\Imm \Hom_\cC(-,f))$. The inverse maps $\rho$ to the additive function $\widetilde{\rho}$ defined by $\widetilde{\rho}(F)\coloneqq\rho(f)$ for $F\cong \Imm \Hom_\cC(-,f)$.
\end{thmx}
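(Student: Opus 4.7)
The plan is to view both sides of the correspondence as two encodings of the same data, bridged by the fact that every finitely presented functor in $\mod\cC$ is an image of a morphism between representables. To begin, I would recall the following structural lemma: any $F\in\mod\cC$ admits a projective presentation $\Hom_\cC(-,X)\xrightarrow{\Hom_\cC(-,g)}\Hom_\cC(-,Y)\to F\to 0$; completing $g$ to a triangle $X\xrightarrow{g}Y\xrightarrow{f}Z\to X[1]$ in $\cC$ and using the associated long exact $\Hom$-sequence yields $F\cong\coker\Hom_\cC(-,g)\cong\Imm\Hom_\cC(-,f)$. So every object of $\mod\cC$ has at least one ``morphism representative'' $f$.

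Next, I would show that the assignment $F\mapsto\rho(f)$ is independent of the choice of representative. Given two morphisms $f,f'$ with isomorphic images in $\mod\cC$, projectivity of representables lifts the isomorphism of images to a compatible diagram of morphisms in $\cC$ comparing $f$ and $f'$ through split epimorphisms and split monomorphisms. The rank-function axioms (direct-sum additivity and invariance under factoring through the image) then force $\rho(f)=\rho(f')$, so $\widetilde{\rho}$ is well defined on objects up to isomorphism.

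The core step is to check that $\widetilde\rho$ is additive on short exact sequences and translation-invariant, while the rank axioms recover from $\widetilde\rho$ in the opposite direction. For a short exact sequence $0\to F_1\to F_2\to F_3\to 0$ in $\mod\cC$, I would construct representative morphisms $f_i\colon Y_i\to Z_i$ simultaneously: start from a suitable lift of the sequence of projective presentations via the horseshoe lemma, complete the resulting morphisms to triangles in $\cC$, and use the octahedral axiom to align the three images into a single diagram in $\cC$. The triangle-type axiom for rank functions then yields $\widetilde\rho(F_2)=\widetilde\rho(F_1)+\widetilde\rho(F_3)$. Translation invariance is immediate from $\Imm\Hom_\cC(-,f[1])\cong(\Imm\Hom_\cC(-,f))[1]$ combined with the axiom $\rho(f[1])=\rho(f)$. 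Conversely, given a translation-invariant additive $\widetilde\rho$, defining $\rho(f):=\widetilde\rho(\Imm\Hom_\cC(-,f))$, one translates each rank-function axiom into an additivity statement in $\mod\cC$: direct sums become direct sums of images; inequalities like $\rho(gh)\le\min\{\rho(g),\rho(h)\}$ come from epi--mono factorisations of $\Imm\Hom_\cC(-,gh)$ through $\Imm\Hom_\cC(-,g)$ and $\Imm\Hom_\cC(-,h)$; the triangle axiom follows from an appropriate short exact sequence of images extracted from the long exact $\Hom$-sequence. That the two assignments are mutually inverse is then essentially by construction.

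The principal obstacle I anticipate is the additivity step. While representatives of individual functors are easy to obtain, lifting a morphism (and, worse, a short exact sequence) of functors to a compatible diagram of triangles in $\cC$ requires a careful synchronisation of projective presentations using the octahedral axiom. The bookkeeping of which triangles produce which images, and the verification that the rank-function axioms exactly match the chosen combinatorics of triangles, is where the proof will need the most care. The other direction, and the verification that the two constructions are inverse, will be comparatively straightforward once additivity is in place.
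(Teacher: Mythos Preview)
Your outline has the right architecture --- horseshoe lemma, completion to triangles, and a $3\times 3$-type argument --- but two of the steps you mark as routine are precisely where the work lies, and your plan as written does not supply the missing ideas.

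\medskip

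\textbf{Well-definedness.} You propose to lift an isomorphism $\Imm\Hom_\cC(-,f)\cong\Imm\Hom_\cC(-,f')$ to a comparison of $f$ and $f'$ through split maps and then invoke ``invariance under factoring through the image''. No such axiom is among (M1)--(M4), and in general $f$ and $f'$ have unrelated domains and codomains, so there is no direct lift to fall back on. The paper's argument is different and makes essential use of (M4): one first proves the identity
\[
\rho(f)+\rho(\Sigma f)=\rho(\id_Y)-\rho(\id_{\Cone f})+\rho(\id_{\Sigma X})
\]
from (M2)--(M3), and then shows that the right-hand side depends only on $\Imm\Hom_\cC(-,f)$ by appealing to a result of Krause that $Y\oplus Z'\oplus\Sigma X\cong Y'\oplus Z\oplus\Sigma X'$ whenever the images agree. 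This gives $\rho(f)+\rho(\Sigma f)=\rho(f')+\rho(\Sigma f')$; only then does (M4) allow one to divide by $2$ and conclude $\rho(f)=\rho(f')$. The $\Sigma$-invariance is not a cosmetic add-on here --- without it, well-definedness genuinely fails (this is why the group-valued generalisation in the paper requires $q+1$ to be regular).

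\medskip

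\textbf{Additivity.} After the horseshoe lemma and the triangulated $3\times 3$ lemma (your ``octahedral'' step), one has three triangles with cones $Z_1,Z,Z_2$ sitting in a candidate triangle $Z_1\to Z\to Z_2\to\Sigma Z_1$. Your sentence ``the triangle-type axiom then yields additivity'' is where the argument actually begins, not ends. The paper first shows that this cone triangle is \emph{split}, by passing to $\mod\cC$ and applying the abelian $3\times 3$ lemma three times to the induced $3\times 5$ diagram. Only once $Z\cong Z_1\oplus Z_2$ is known does the computation close, and even then it again runs through the factor-of-$2$ trick: one computes
\[
2\bigl(\widetilde\rho(F_1)-\widetilde\rho(F)+\widetilde\rho(F_2)\bigr)
=\sum_{i}\bigl(\rho(f_i)+\rho(\Sigma f_i)\bigr)-\bigl(\rho(f)+\rho(\Sigma f)\bigr)=0
\]
using (M4) and the identity above together with (M2) applied to the split cones. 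So both the splitting of the cone column and the reliance on (M4) are essential and absent from your sketch.

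\medskip

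A minor point: $\rho(gh)\le\min\{\rho(g),\rho(h)\}$ is not an axiom of a rank function but a consequence (it drops out of the additive picture once the theorem is proved), so it need not be verified in the direction from $\widetilde\rho$ to $\rho$.
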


\noindent This theorem was inspired by results  on characters on additive categories due to Crawley-Boevey (\cite{Crawley-Boevey1994a,Crawley-Boevey1994}) and on cohomological length functions due to Krause  (\cite{Krause2016}). 
Now we can reap the fruits of passing to the functor category. By adapting results from \cite{Crawley-Boevey1994a,Crawley-Boevey1994}, we obtain a decomposition theorem for integral rank functions.

\begin{thmx}[Theorem \ref{thm:decomposition}]
	Every integral rank function on a skeletally small triangulated category $\cC$ can be uniquely  decomposed as a locally finite sum
	of irreducible rank functions.
\end{thmx}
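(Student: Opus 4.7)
The plan is to transfer the statement via Theorem~\ref{theorem:translationResult} to the abelian category $\mod\cC$, apply the classical decomposition theory of Crawley-Boevey for additive functions on skeletally small abelian categories, and then extract from the result the desired decomposition into irreducibles at the level of $\cC$.

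First I would use Theorem~\ref{theorem:translationResult} to convert the integral rank function $\rho$ on $\cC$ into a translation-invariant additive function $\widetilde{\rho}\colon\mod\cC\to\Z_{\geq 0}$. Since $\cC$ is skeletally small, so is the abelian category $\mod\cC$, and the shift on $\cC$ induces an autoequivalence of $\mod\cC$ under which $\widetilde{\rho}$ is invariant by construction.

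Next I would invoke Crawley-Boevey's results \cite{Crawley-Boevey1994,Crawley-Boevey1994a}: every non-negative integer-valued additive function on a skeletally small abelian category admits a unique decomposition as a locally finite sum of irreducible additive functions, with the irreducibles indexed by a suitable set of ``points'' (for instance, isomorphism classes of indecomposable injectives in the $\Ind$-completion, or equivalently simple objects in appropriate Serre quotients). Applying this to $\widetilde{\rho}$ produces an expansion $\widetilde{\rho}=\sum_{p}n_{p}\widetilde{\rho}_{p}$ with $n_{p}\in\Z_{\geq 0}$.

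The main obstacle, which I expect to require the most care, is enforcing compatibility with the translation action: the shift autoequivalence permutes the parametrising points, and by the uniqueness part of Crawley-Boevey's theorem applied to $\widetilde{\rho}$ and its shift, the multiplicities $n_p$ are constant along shift-orbits. I would then reorganise the decomposition by grouping summands according to shift-orbits, obtaining pieces $\widetilde{\sigma}_{[p]}\coloneqq\sum_{q\in[p]}\widetilde{\rho}_{q}$ that are manifestly translation-invariant. To show that each $\widetilde{\sigma}_{[p]}$ is a well-defined additive function one needs, for every finitely presented $F\in\mod\cC$, that only finitely many shifts of $p$ contribute to $\widetilde{\rho}_{p}(F)$; this should follow from the fact that $F$ has only finitely many indecomposable summands appearing in its composition factors inside the relevant localisation, so only finitely many points in any single orbit meet its support. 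Local finiteness of the resulting sum over orbits then follows from local finiteness of the original Crawley-Boevey decomposition.

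Finally, I would define an integral rank function on $\cC$ to be irreducible when its image under Theorem~\ref{theorem:translationResult} is of the form $\widetilde{\sigma}_{[p]}$ for a single orbit, that is, when it admits no further non-trivial decomposition into translation-invariant integral additive functions on $\mod\cC$. Transporting the orbit-wise decomposition back through the inverse bijection of Theorem~\ref{theorem:translationResult} then yields the asserted locally finite decomposition of $\rho$ into irreducible rank functions, and uniqueness transfers from the Crawley-Boevey uniqueness statement (together with equivariance of the orbit grouping) through the bijection of Theorem~\ref{theorem:translationResult}.
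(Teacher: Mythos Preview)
Your proposal is correct and follows essentially the same route as the paper's proof: pass to $\mod\cC$ via Theorem~\ref{theorem:translationResult}, apply Crawley-Boevey's decomposition theorem, use uniqueness together with $\Sigma$-invariance to see that the irreducible summands are permuted by the shift, group them into $\Sigma$-orbits, and translate back. The only place where you overcomplicate matters is the local finiteness of each orbit sum $\widetilde{\sigma}_{[p]}$: since every irreducible in the orbit already occurs with positive multiplicity in the original locally finite decomposition of $\widetilde{\rho}$, the orbit sum is a sub-sum of that decomposition and is therefore automatically locally finite---no appeal to composition factors in a localisation is needed.
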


\noindent Here, by irreducible rank function we mean a non-zero integral rank function that cannot be further decomposed into a sum of two non-zero integral rank functions. Every prime rank function, for example, can be shown to be irreducible. 
Following this strategy and using results of Crawley-Boevey, Herzog and Krause (\cite{Crawley-Boevey1994a,Crawley-Boevey1994,Herzog1997,Krause1997}), rank functions can be further related to many notions appearing in the context of functor categories, e.g.~to cohomological endofinite functors in $\Mod\cC$, Serre subcategories of $\mod\cC$, hereditary torsion subcategories of finite type of $\Mod\cC$, and closed subsets of the Ziegler spectrum of $\Mod\cC$ (see Figure \ref{fig:intro} for details).

The functorial approach allows us to extend the bijection between localising prime rank functions on $\cc$ and thick subcategories of $\cc$ with simple Verdier quotient, established in \cite[Theorem 6.4]{Chuang2021}. For this purpose, we introduce the class of exact rank functions, which contains all localising rank functions, and we use the notion of a CE-quotient functor, introduced by Krause in \cite{Krause2005}. Instead of functors to simple triangulated categories, we are now considering functors to locally finite triangulated categories, that is, triangulated categories $\cE$ such that $\mod \cE$ is an abelian length category. Instead of prime rank functions, we use basic ones, by which we mean integral rank functions with no repetitions in the irreducible summands appearing in their decomposition. This can be seen as another normalisation condition. 

\begin{thmx}[Theorem \ref{thm: FDT_generalised}]
Let $\cC$ be a skeletally small triangulated category. There is a bijective correspondence between:
	\begin{enumerate}[(a)]
		\item exact basic rank functions on $\cc$;
		\item equivalence classes of CE-quotient functors from $\cc$ to locally finite triangulated categories.
	\end{enumerate}
	Moreover, the underlying rank function is localising if and only if the CE-quotient is equivalent to a Verdier quotient.
\end{thmx}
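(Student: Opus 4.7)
The plan is to leverage Theorem A (the correspondence between rank functions on $\cC$ and translation-invariant additive functions on $\mod\cC$) together with Theorem B (the decomposition into irreducibles) to reduce the statement to a correspondence between suitable Serre-type data on $\mod\cC$ and quotients onto length categories, and then invoke Krause's CE-quotient machinery from \cite{Krause2005} to translate such quotients back to triangulated functors $\cC\to\cE$.

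For the direction (a)$\Rightarrow$(b), let $\rho$ be an exact basic rank function on $\cC$ and let $\widetilde{\rho}$ be the corresponding translation-invariant additive function on $\mod\cC$ via Theorem A. Since $\widetilde{\rho}$ is additive, its vanishing locus $\cS_\rho\coloneqq\{F\in\mod\cC:\widetilde{\rho}(F)=0\}$ is a Serre subcategory, and it is translation-invariant because $\widetilde{\rho}$ is. The role of exactness of $\rho$ should be precisely to guarantee that the Serre quotient $\mod\cC/\cS_\rho$ is equivalent to $\mod\cE$ for some triangulated $\cE$ (as opposed to merely an abelian category that does not come from a triangulated one); the basic condition then says that the irreducible summands of $\widetilde{\rho}$ correspond bijectively, without multiplicity, to the isomorphism classes of simple objects of $\mod\cC/\cS_\rho$. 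In particular each simple has finite length, so $\mod\cC/\cS_\rho$ is a length category and $\cE$ is locally finite. By the main result of \cite{Krause2005}, $\cS_\rho$ is the kernel of a CE-quotient functor $q:\cC\to\cE$, unique up to equivalence.

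For the direction (b)$\Rightarrow$(a), given a CE-quotient $q:\cC\to\cE$ with $\cE$ locally finite, the induced functor $\mod q:\mod\cC\to\mod\cE$ is an exact Serre quotient. The composition length $\ell$ on the length category $\mod\cE$ is an additive function which is translation-invariant since the suspension of $\cE$ acts as an exact auto-equivalence permuting the simples of $\mod\cE$. Pulling back gives the translation-invariant additive function $\widetilde{\rho}\coloneqq\ell\circ\mod q$ on $\mod\cC$, and Theorem A produces a rank function $\rho$ on $\cC$. One checks that $\rho$ is exact because $\mod q$ is exact, and basic because each simple in $\mod\cE$ is counted exactly once by $\ell$. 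The two constructions are mutually inverse, as both encode the same Serre subcategory $\cS_\rho=\Ker(\mod q)$ of $\mod\cC$; equivalence of CE-quotients is precisely equality of kernels, matching the data on the rank function side.

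For the final clause, the point is to identify localising rank functions with the case in which the Serre subcategory $\cS_\rho\subseteq\mod\cC$ is the restriction of a hereditary torsion class of finite type in $\Mod\cC$, which is exactly the condition (again via \cite{Krause2005}) that $q$ be a Bousfield/Verdier quotient $\cC\to\cC/\CK$. The main obstacle I foresee is the precise matching of the notion of exactness for a rank function with the condition that $\mod\cC/\cS_\rho$ is of the form $\mod\cE$ for a triangulated $\cE$; this hinges on Krause's characterisation of which Serre subcategories of $\mod\cC$ arise as kernels of CE-quotients, and is the step where the new notion of exact rank function must be shown to be the correct generalisation of localising (while the basic condition is just the natural normalisation refining Chuang--Lazarev's prime condition).
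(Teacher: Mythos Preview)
Your overall architecture matches the paper's: pass to $\mod\cC$ via Theorem~\ref{thm:TransInvariantAddFunctions<->rankFunctions}, identify the Serre subcategory $\cS_\rho=\Ker\widetilde{\rho}$, and invoke Krause's classification of CE-quotient functors. But three of your identifications are off, and the last one is a genuine error.

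First, the length-category step. You attribute the fact that $\mod\cC/\cS_\rho$ is a length category to basicness; it is not. What is used is \emph{integrality} of $\widetilde{\rho}$, via Proposition~\ref{prop:LocAdditiveFunctions}. Basicness enters only as a normalisation making the rank function uniquely recoverable from its kernel (Corollary~\ref{cor:charac_loc_finite_abelian}); without it the map $(\text{a})\to(\text{b})$ would not be injective.

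Second, the sentence ``$\rho$ is exact because $\mod q$ is exact'' is a non sequitur: the induced functor on module categories is always exact. By definition, exactness of $\rho$ means that the right adjoint of the big localisation $\Mod\cC\to(\Mod\cC)/\overrightarrow{\cS_\rho}$ is exact. The point, which you gesture at but do not pin down, is that this is \emph{precisely} Krause's characterisation of which $\Sigma$-closed Serre subcategories of $\mod\cC$ arise from CE-quotients (Theorem~\ref{thm:consequence_coh_loc}). Once this is said, the bijection $(\text{a})\leftrightarrow(\text{b})$ is simply the composite of Theorem~\ref{thm:LongBijRankFunc} with Theorem~\ref{thm:consequence_coh_loc}, restricted by the definition of exactness; there is nothing further to check.

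Third, and most seriously, your account of the localising clause is wrong. \emph{Every} Serre subcategory of $\mod\cC$ is the restriction of a hereditary torsion class of finite type in $\Mod\cC$; that is Theorem~\ref{thm:ZieglerSpecLocallyCohCat} and it cannot distinguish Verdier quotients among CE-quotients. The paper's argument uses instead Proposition~\ref{prop: kernels}: $\rho$ is localising if and only if $\Ker\widetilde{\rho}=\Ker(\kappa^*|_{\mod\cC})$, where $\kappa\colon\cC\to\cC/\Ker\rho_{ob}$ is the Verdier localisation. One direction is easy: if $\pi$ is Verdier then $\ell_\cE$ is morphism-faithful, so $\Ker\rho=\Ann\pi$ consists of maps factoring through $\Ker\pi$, whence $\rho$ is localising. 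Conversely, if $\rho$ is localising, Proposition~\ref{prop: kernels} gives $\Ker(\kappa^*|_{\mod\cC})=\Ker\widetilde{\rho}=\Ker(\pi^*|_{\mod\cC})$ for the CE-quotient $\pi$ associated to $\rho$; since Theorem~\ref{thm:consequence_coh_loc} is a bijection via this kernel, $\pi$ and $\kappa$ are equivalent, so $\pi$ is a Verdier quotient.
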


The CE-quotient functors have a more natural interpretation in the particularly nice setting when $\cC$ is the subcategory $\cT^c$ of compact objects of a compactly generated triangulated category $\cT$. In this context, rank functions can be related to endofinite objects and definable subcategories of $\cT$ (see Figure \ref{fig:intro} for details). The connection with endofinite objects allows us to explicitly describe basic rank functions on $\cT^c$. Together with the decomposition theorem, this provides a complete description of integral rank functions in this setting. 

\begin{thmx}[Theorem \ref{thm:FundamentalCorrespondenceCompGen}]
Let $\cT$ be a compactly generated triangulated category. Every basic rank function on $\cT^c$ is of the form $\length_{\End_{\cT}(Z)} \Hom_\cT(-,Z)|_{\cT^c}$ for some basic endofinite object $Z$ in $\cT$ such that $\Sigma Z\cong Z$.
\end{thmx}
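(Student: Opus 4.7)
The plan is to chain three correspondences: Theorem~\ref{theorem:translationResult}, identifying rank functions on $\cT^c$ with translation-invariant additive functions on $\mod\cT^c$; the correspondence between integral additive functions on $\mod\cT^c$ and cohomological endofinite functors in $\Mod\cT^c$ that is summarised in Figure~\ref{fig:intro} (in the spirit of Crawley-Boevey, Herzog, and Krause); and, in the compactly generated setting, the equivalence supplied by the restricted Yoneda functor $y\colon\cT\to\Mod\cT^c$, $X\mapsto\Hom_\cT(-,X)|_{\cT^c}$, between cohomological endofinite functors in $\Mod\cT^c$ and endofinite objects of $\cT$.

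Let $\rho$ be a basic rank function on $\cT^c$, and let $\widetilde{\rho}$ be the associated translation-invariant additive function on $\mod\cT^c$. I would then invoke Theorem~\ref{thm:decomposition} to write $\widetilde{\rho}=\sum_i\widetilde{\rho}_i$ as a locally finite sum of irreducibles, each summand occurring with multiplicity one since $\rho$ is basic. Via the correspondences mentioned above, each $\widetilde{\rho}_i$ is realised by an indecomposable cohomological endofinite $F_i\in\Mod\cT^c$, and $F_i\cong y(Z_i)$ for an indecomposable endofinite $Z_i\in\cT$ (unique up to isomorphism). Assemble $Z\coloneqq\bigoplus_i Z_i$; since the $Z_i$ are pairwise non-isomorphic, $Z$ is basic.

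Next I would show that the translation invariance of $\widetilde{\rho}$ forces $\Sigma Z\cong Z$: the functor $y$ intertwines $\Sigma$ on $\cT$ with the translation on $\Mod\cT^c$, so the multiset $\{Z_i\}$ must be $\Sigma$-stable, and together with basicity this promotes to an isomorphism $\Sigma Z\cong Z$. To finish, one checks that $\rho$ equals the rank function $X\mapsto\length_{\End_\cT(Z)}\Hom_\cT(X,Z)$: the right-hand side is a translation-invariant additive function on $\mod\cT^c$ (since $y(Z)\cong\bigoplus F_i$ is a cohomological endofinite functor and $\Sigma Z\cong Z$), and by construction its decomposition into irreducibles is exactly $\sum_i\widetilde{\rho}_i=\widetilde{\rho}$, so it corresponds to $\rho$ under Theorem~\ref{theorem:translationResult}.

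The main obstacle is the bookkeeping for the composition length over the generally non-local ring $\End_\cT(Z)$ of the basic object $Z=\bigoplus Z_i$. I would handle this via the primitive idempotents coming from the decomposition of $Z$: each $\Hom_\cT(X,Z_i)$ is a module over the local ring $\End_\cT(Z_i)$ (local since an indecomposable endofinite object is pure-injective), and length over $\End_\cT(Z)$ decomposes as the sum of lengths over the $\End_\cT(Z_i)$. This matches the irreducible decomposition $\widetilde\rho=\sum_i\widetilde\rho_i$ summand by summand, ensuring the two rank functions agree on all of $\cT^c$.
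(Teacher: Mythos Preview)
Your strategy mirrors the paper's: pass through Theorem~\ref{thm:TransInvariantAddFunctions<->rankFunctions} to additive functions on $\mod\cT^c$, invoke the Crawley-Boevey--Herzog--Krause correspondence to reach endofinite cohomological functors, and then transport along the restricted Yoneda functor $\Upsilon$ to endofinite objects of $\cT$. The paper packages the first two steps into Theorem~\ref{thm:LongBijRankFunc} and Theorem~\ref{thm:lastPreliminaries}, and the third into Lemma~\ref{cor:BijAndDecompCompGenTria}; your version unpacks these and works with the irreducible decomposition directly.

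There are, however, two genuine gaps in your final paragraph. First, you assemble $Z=\bigoplus_i Z_i$ from indecomposable endofinite $Z_i$ and implicitly assume $Z$ is again endofinite. This is not automatic: a coproduct of endofinite objects need not be endofinite. The paper handles this in Proposition~\ref{prop:IrrAdditiveFunctions<->InjFinEndolength2}(2) by passing to the locally finite localisation $\cA/\overrightarrow{\Ker\widetilde{\rho}}$ and using that injectives there have finite endolength (Lemma~\ref{lem:injFinEndolength}). Second, your claim that ``length over $\End_\cT(Z)$ decomposes as the sum of lengths over the $\End_\cT(Z_i)$'' does not follow from the idempotent decomposition alone: $\Hom_\cT(X,Z)=\bigoplus_i\Hom_\cT(X,Z_i)$ is a decomposition of abelian groups, not of $\End_\cT(Z)$-modules, since post-composition by a general endomorphism of $Z$ does not preserve the summands. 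The identity $\widetilde{\rho}_H=\sum_i\widetilde{\rho}_{H_i}$ is exactly the content of Proposition~\ref{prop:IrrAdditiveFunctions<->InjFinEndolength2}(1), and its proof is not a bookkeeping exercise---it again goes through the Serre localisation at $\Ker\widetilde{\rho}_H$ and shows that both sides compute composition length in the locally finite quotient. You should either cite that result or reproduce its argument.
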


The 
correspondence between rank functions on $\cT^c$ and definable subcategories of $\cT$
suggests the following change of perspective. Instead of describing rank functions via quotients of $\cT^c$, we could describe them using certain subcategories of $\cT$. 
In fact, we show in Corollary \ref{cor:idemp-cdrho_triang} that a basic rank function on  $\cT^c$ is exact if and only if the corresponding definable subcategory is triangulated.
This triangulated category is then compactly generated and by passing to compact objects one can recover the associated CE-quotient. Here, we use that a CE-quotient functor is, roughly speaking, a restriction of a smashing localisation to the level of compacts. Kernels of such smashing localisations are usually referred to as smashing subcategories, and are essential when studying decompositions of $\cT$ into smaller triangulated categories (see, for example, \cite{Nicolas}). Additionally, 
exact rank functions on $\cT^c$ coincide with idempotent rank functions, that is, rank functions $\rho$ such that the ideal of morphisms satisfying $\rho(f)=0$ is idempotent. As a result, we obtain the following theorem:

\begin{thmx}[Theorem \ref{thm_smashing}]
	Let $\cT$ be a compactly generated triangulated category. There is a bijection between:
	\begin{enumerate}[(a)]
	   	\item idempotent basic rank functions on $\cT^c$;
		\item smashing subcategories $\cS$ of $\cT$ with $(\cT/\cS)^c$ locally finite.
	\end{enumerate}
\end{thmx}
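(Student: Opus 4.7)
The plan is to chain together two bijections already established in the paper and to complement them with a correspondence specific to compactly generated categories. By Corollary \ref{cor:idemp-cdrho_triang}, together with the observation in the introduction that on $\cT^c$ idempotent and exact rank functions coincide, the idempotent basic rank functions on $\cT^c$ are precisely the exact basic rank functions. Theorem \ref{thm: FDT_generalised} then puts these in bijection with equivalence classes of CE-quotient functors $F\colon \cT^c \to \cE$ with $\cE$ locally finite. The content of the present theorem thus reduces to a bijection between such CE-quotient functors and smashing subcategories $\cS \subseteq \cT$ with $(\cT/\cS)^c$ locally finite.

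For the forward direction, starting from a smashing subcategory $\cS \subseteq \cT$, the Verdier localisation $q\colon \cT \to \cT/\cS$ preserves coproducts and hence restricts to a triangulated functor $q^c\colon \cT^c \to (\cT/\cS)^c$. Following the philosophy, emphasised in the introduction, that a CE-quotient functor is the restriction to compacts of a smashing localisation, one checks that $q^c$ is a CE-quotient functor, and the hypothesis that $(\cT/\cS)^c$ be locally finite places $q^c$ in the class matched by Theorem \ref{thm: FDT_generalised}.

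For the reverse direction, given a CE-quotient functor $F\colon \cT^c \to \cE$ with $\cE$ locally finite, one must produce a smashing localisation of $\cT$ lifting $F$. The strategy is to pass to the functor categories: $F$ induces an exact coproduct-preserving functor $\Mod \cT^c \to \Mod \cE$ whose kernel is a hereditary torsion subcategory of finite type of $\Mod \cT^c$. Restricting this data along the restricted Yoneda embedding $\cT \to \Mod \cT^c$ produces a smashing subcategory $\cS \subseteq \cT$ with $(\cT/\cS)^c \simeq \cE$. That the two constructions are mutually inverse is then a formal check: a smashing subcategory is determined by its associated localisation, so two smashing subcategories giving equivalent CE-quotients must coincide.

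The main obstacle, I expect, is this lifting step: showing that an arbitrary CE-quotient of $\cT^c$ into a locally finite $\cE$ actually arises from a smashing localisation of $\cT$. Local finiteness is indispensable here, since it matches $\mod \cE$ with a hereditary torsion subcategory of finite type of $\Mod \cT^c$, and the latter correspond in turn to smashing subcategories of $\cT$ via the standard dictionary for compactly generated categories. Once this correspondence at the level of abelianisations is in place, restricting back to compact objects yields the stated bijection.
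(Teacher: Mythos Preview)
Your overall architecture is right: reduce to the bijection between CE-quotient functors $\cT^c\to\cE$ with $\cE$ locally finite and smashing subcategories $\cS$ with $(\cT/\cS)^c$ locally finite, via Lemma \ref{lem:CE.idemp} and Theorem \ref{thm: FDT_generalised}. (Note that you should cite Lemma \ref{lem:CE.idemp} for ``idempotent $=$ exact'' rather than Corollary \ref{cor:idemp-cdrho_triang}, which comes after this theorem.)

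The genuine gap is in your reverse direction and in the role you assign to local finiteness. You claim that the CE-quotient yields a hereditary torsion subcategory of finite type of $\Mod\cT^c$, and that ``the latter correspond in turn to smashing subcategories of $\cT$ via the standard dictionary''. This is false: hereditary torsion subcategories of finite type of $\Mod\cT^c$ correspond to Serre subcategories of $\mod\cT^c$ (Theorem \ref{thm:ZieglerSpecLocallyCohCat}), equivalently to cohomological ideals, and \emph{not} all of these come from smashing subcategories. What singles out the smashing ones is precisely the \emph{exactness} of the ideal (equivalently, that the Serre localisation has an exact right adjoint; see Theorem \ref{thm:consequence_coh_loc}). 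Local finiteness of $\cE$ plays no role whatsoever in lifting the CE-quotient to a smashing localisation; it only controls the condition on $(\cT/\cS)^c$ after the lift.

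The paper's argument bypasses the module-category detour entirely: it invokes Krause's bijection between exact ideals of $\cT^c$ and smashing subcategories of $\cT$ directly (via $I\mapsto\operatorname{Filt}I$, \cite[Corollary 12.5]{Krause2005}), and then identifies $\cE$ with $(\cT/\cS)^c$ up to direct factors (\cite[Corollary 12.7]{Krause2005}), so that $\cE$ is locally finite if and only if $(\cT/\cS)^c$ is. You should replace your lifting argument by this appeal to exact ideals.
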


\noindent 
This correspondence restricts to a bijection between localising basic rank functions and compactly generated smashing subcategories as in (b), allowing us to test a restricted version of the telescope conjecture for compactly generated triangulated categories using rank functions (see Corollary \ref{cor_smashing}).
In case $\cC$ is the perfect derived category of a dg algebra $A$, the connection between rank functions on $\cper (A)$ and smashing subcategories of the derived category $\D(A)$ provides the following generalisation of \cite[Theorem 6.5]{Chuang2021}:

\begin{thmx}[Theorem \ref{cor_dg}]
\label{thm:introF}
Let $A$ be a dg algebra. There is a bijection between:
	\begin{enumerate}[(a)]
	\item idempotent basic rank functions on $\cper (A)$;
		\item equivalence classes of homological epimorphisms $A\to B$ with $\cper (B)$ locally finite.
	\end{enumerate}
\end{thmx}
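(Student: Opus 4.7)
The plan is to deduce Theorem \ref{cor_dg} by combining Theorem \ref{thm_smashing} with the classification of smashing subcategories of the derived category of a dg algebra via homological epimorphisms, due to Nicol\'as and Saor\'in. Since for $\cT = \D(A)$ one has $\cT^c = \cper(A)$, the first step is to specialise Theorem \ref{thm_smashing} to this setting. This immediately produces a bijection between idempotent basic rank functions on $\cper(A)$ and smashing subcategories $\cS$ of $\D(A)$ with $(\D(A)/\cS)^c$ locally finite.

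Next I would invoke the Nicol\'as--Saor\'in correspondence, which furnishes a bijection between smashing subcategories $\cS$ of $\D(A)$ and equivalence classes of homological epimorphisms of dg algebras $A \to B$. Under this correspondence, $\cS$ is realised as the kernel of the smashing localisation $\D(A) \to \D(B)$ induced by the derived restriction of scalars along $A \to B$, and one obtains an induced triangle equivalence $\D(A)/\cS \simeq \D(B)$. Restricting this equivalence to compact objects, and using that $B$ is a compact generator of $\D(B)$, so that $\D(B)^c = \cper(B)$, we get
$$(\D(A)/\cS)^c \;\simeq\; \cper(B).$$
In particular, $(\D(A)/\cS)^c$ is locally finite if and only if $\cper(B)$ is.

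Composing the two bijections produces the desired correspondence between idempotent basic rank functions on $\cper(A)$ and equivalence classes of homological epimorphisms $A \to B$ with $\cper(B)$ locally finite. The step demanding the most care is the Nicol\'as--Saor\'in identification, above all checking that the triangle equivalence $\D(A)/\cS \simeq \D(B)$ furnished by a homological epimorphism matches compact objects on both sides with $\cper(B)$; this uses essentially that the smashing property of $\cS$ guarantees that the quotient functor sends the compact generator $A$ of $\D(A)$ to the compact generator $B$ of $\D(B)$. Once this identification is in place, the translation of the local finiteness condition and the final composition of bijections are formal.
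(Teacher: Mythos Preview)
Your proposal is correct and follows essentially the same approach as the paper: specialise Theorem \ref{thm_smashing} to $\cT=\D(A)$, then invoke the Nicol\'as/Bazzoni--\v{S}\v{t}ov\'{\i}\v{c}ek bijection between smashing subcategories of $\D(A)$ and equivalence classes of homological epimorphisms $A\to B$, using the identification $\D(A)/\cS\simeq\D(B)$ to match the local finiteness conditions. One small terminological slip: the smashing localisation $\D(A)\to\D(B)$ is $-\otimes^L_A B$, the left adjoint of restriction of scalars, not the restriction itself.
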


\noindent Recall that a morphism of dg algebras $A\to B$ is a
homological epimorphism if the induced restriction functor $\D(B)\to \D(A)$ is fully faithful. The bijection in Theorem \ref{thm:introF} is explicit and gives a complete description of the equivalence classes of the corresponding homological epimorphisms.

\begin{figure}[h]
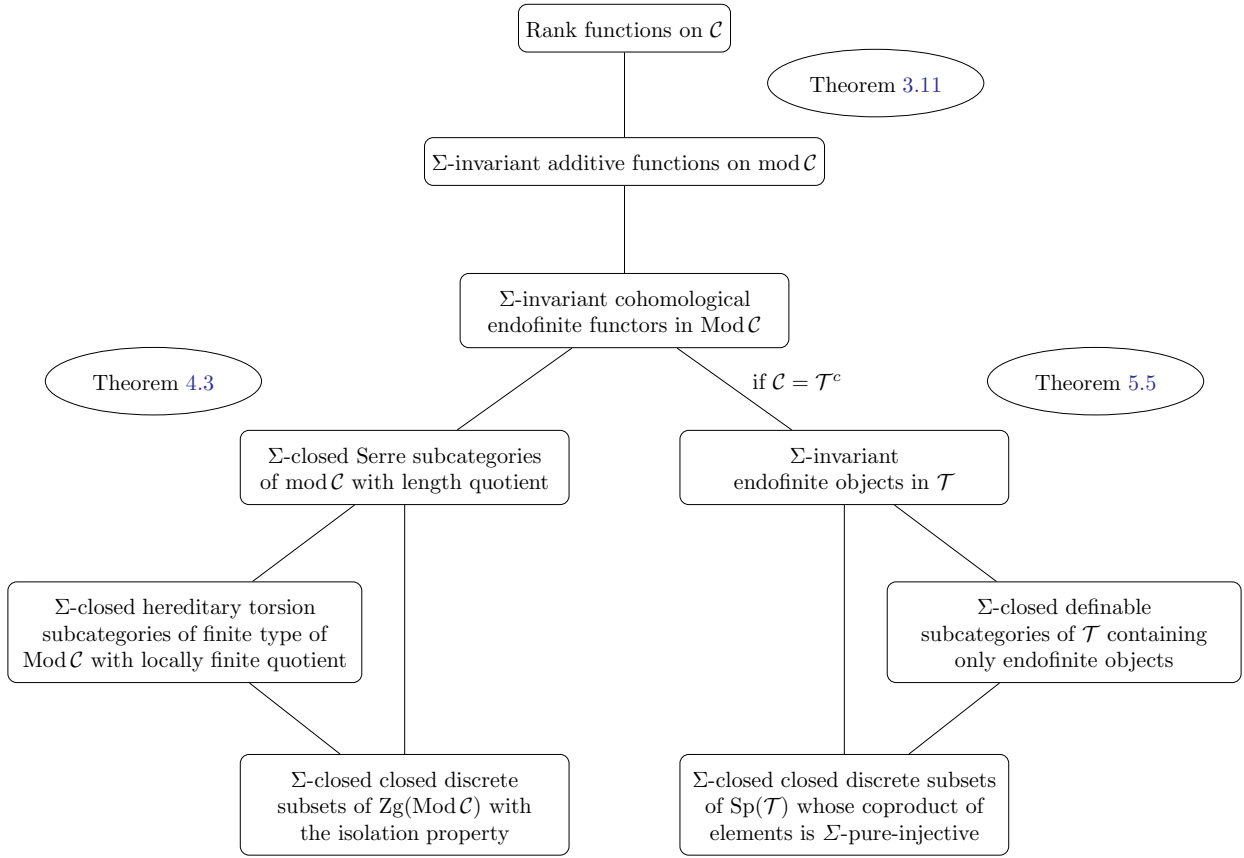
\label{fig:intro}
\begin{center}
    % Node styles
\tikzstyle{rectangle_1}=[fill=white, draw=black, shape=rectangle, text height=0.4cm, rounded corners]
\tikzstyle{fixed_width_rectangle_2}=[fill=white, draw=black, shape=rectangle, text width=5cm, text height=0.4cm, align=center, rounded corners]
\tikzstyle{fixed_width_rectangle_3}=[fill=white, draw=black, shape=rectangle, align=center, text height=0.4cm, text width=5.4cm, rounded corners]
\tikzstyle{ellipse}=[fill=white, draw=black, shape=ellipse, text height=0.4cm, text width=2.2cm, align=center]

% Edge styles
\tikzstyle{arrow}=[-]
    \scalebox{0.835}{\tikzfig{new_diagram}}
\end{center}
\caption{Correspondences considered in the paper.}
\end{figure}

\subsection*{Further directions} One of our motivations to study rank functions is their similarity to valuations on objects in exact categories considered in \cite{FangGorsky1}. An upcoming work of Xin Fang and the second named author will further explore this analogy by showing that each rank function induces a flat degeneration of the derived Hall algebra of $\cC$, whenever the latter is well-defined, and by introducing a common generalisation of valuations and rank functions in the setting of extriangulated categories.

\subsection*{Organisation of the paper}

The paper is organised as follows. In Section \ref{sec:preliminaries}, we review necessary background on locally coherent categories and additive functions. In Section \ref{sec:RankFunctions}, we prove the correspondence between rank functions on triangulated categories and additive functions on their abelianisations.  In Section \ref{sec:Integral}, we study integral rank functions, prove the decomposition theorem and study exact and localising rank functions. Section \ref{sec:compact} is dedicated to the case of rank functions on the subcategory of compact objects in a compactly generated triangulated category. In Section \ref{sec:Example_Cluster}, we analyse rank functions on the cluster category of type $A_3$ -- an example of a locally finite triangulated category. We finish the paper with two appendices. In Appendix \ref{Appendix A}, we provide a proof of the bijection between basic additive functions on the category of finitely presented objects $\fp \cA$ of a locally coherent category $\cA$ and isoclasses of basic injective objects in $\cA$ of finite endolength. This is a version of the classical correspondence due to Crawley-Boevey between irreducible additive functions and indecomposable injectives of finite endolength (see \cite{Crawley-Boevey1994a}). In Appendix \ref{appendix:group_valued}, we discuss rank functions with values in partially ordered groups (generalising $d$-periodic rank functions from \cite{Chuang2021}) and extend Theorem \ref{thm:TransInvariantAddFunctions<->rankFunctions} to this context.   

\subsection*{Acknowledgements}
Most of this work was carried out during the employment of TC at the University of Stuttgart and she acknowledges support from the Deutsche Forschungsgemeinschaft (DFG) through
the grant KO 1281/18. TC also acknowledges partial support from DFG through the grant SFB-TRR 358/1 2023 — 491392403. MG was partially supported by the French ANR grant CHARMS~(ANR-19-CE40-0017). This work is a part of a project that has received funding from the European Research Council (ERC) under the European Union’s Horizon 2020 research and innovation programme (grant agreement No. 101001159). Parts of this work were done during stays of MG at the University of Stuttgart, and he is very grateful to Steffen Koenig for the hospitality. FM acknowledges support through a postdoctoral fellowship of the Baden-Württemberg Stiftung. AZ acknowledges support by the Czech Academy of Sciences CAS (RVO 67985840). Moreover, FM and AZ acknowledge support from DFG through a scientific network on silting theory. We thank the anonymous referee for carefully reading the manuscript and for suggesting various improvements in the exposition.

\subsection*{Conventions}

All categories are assumed to be locally small. All limits and colimits considered are therefore small. The letters $\cF$ and $\cA$ shall represent abelian categories, whereas $\cC$ and $\cT$ will typically denote triangulated categories, with $\cF$ and $\cC$ being in addition skeletally small. 
All subcategories are assumed to be full and closed under isomorphisms. For a subcategory $\cS$ in a triangulated category $\cT$, we denote by $\cS^\perp$ the subcategory of $\cT$ given by $\{Z\text{ in }\cT\mid \Hom_\cT(X,Z)=0\text{ for all }X\text{ in }\cS\}$. The subcategory $^\perp\cS$ is defined dually.

\section{Background on additive functions}
\label{sec:preliminaries}

In this section, we collect background material and draw connections between the notions of finite endolength object, additive function and Ziegler spectrum. The section culminates in Theorem \ref{thm:lastPreliminaries}, which will later be used to study integral rank functions on triangulated categories.

\subsection{Locally coherent categories}
\label{subsection lcc}

An object $X$ in an additive category $\cA$ with filtered colimits is called \emphbf{finitely presented} if the functor $\Hom_\cA (X,-):\cA \to \Ab$ preserves filtered colimits. The full subcategory of all finitely presented objects in $\cA$ is denoted by $\fp \cA$.

\begin{definition}
	An abelian category $\cA$ with filtered colimits is called \emphbf{locally coherent} if  $\fp \cA$ is a skeletally small abelian subcategory of $\cA$ such that
	every object in $\cA$ is a filtered colimit of objects in $\fp \cA$.
\end{definition}
In the following, $\cA$ will be a locally coherent category. Note that any locally coherent category is automatically Grothendieck, so $\cA$ is complete and cocomplete, has injective envelopes and exact filtered colimits (see \cite[\S 2.4]{Crawley-Boevey1994}). 
A short exact sequence in $\cA$
\[
\begin{tikzcd}
0 \ar[r] & X \ar[r] & Y \ar[r] & Z\ar[r] & 0
\end{tikzcd}
\]
is said to be \emphbf{pure-exact} if it induces a short exact sequence of abelian groups
\[
\begin{tikzcd}
0 \ar[r] & \Hom_\cA (W,X) \ar[r] & \Hom_\cA (W,Y) \ar[r] & \Hom_\cA (W,Z) \ar[r] & 0
\end{tikzcd}
\]
for every $W$ in $\fp \cA$. An object $X$ is \emphbf{pure-injective} if every pure-exact sequence with the first term $X$ splits, and \emphbf{$\varSigma$-pure-injective} if any coproduct of copies of $X$ is pure-injective. Moreover, $X$ is called \emphbf{fp-injective} if $\Ext_\cA^1 (W,X)=0$ for every $W$ in $\fp \cA$. The class of fp-injective objects is closed under direct factors and coproducts -- to see this one can use the characterisation of fp-injectivity in \cite[\S 2.5]{Crawley-Boevey1994}. A standard verification shows that the fp-injective pure-injective objects coincide with the injectives in the category $\cA$. Indeed, if $X$ is injective, it is both fp-injective and pure-injective. Conversely, if $X$ is fp-injective, then any short exact sequence starting at $X$ is pure-exact, as $\Ext_\cA^1 (W,X)=0$ for every $W$ in $\fp \cA$. 
If $X$ is moreover pure-injective, this implies that any short exact sequence starting at $X$ splits, so $X$ is injective.

The notion of a finite endolength object will be of central importance in this work. We are especially interested in injective objects of finite endolength.

\begin{definition}
	An object $X$ in $\cA$ has \emphbf{finite endolength} if the $\End_{\cA} (X)$-module $\Hom_\cA (W,X)$ has finite length for every $W$ in $\fp \cA$. 
\end{definition}

By \cite[\S 3.5, \S 3.6]{Crawley-Boevey1994}, every finite endolength object $X$ in $\cA$ is $\varSigma$-pure-injective and can be decomposed as a coproduct of indecomposable objects with local endomorphism rings. In fact, this decomposition is essentially unique by the Krull--Remak--Schmidt--Azumaya theorem. The class of finite endolength objects is closed under finite coproducts, direct factors, and  products and coproducts of copies of the same object (see \cite[Corollaries 13.1.15, 13.1.13]{Krause2022}). 
Note that the injective objects of finite endolength coincide with the fp-injective objects of finite endolength, since finite endolength objects are, in particular, pure-injective and for pure-injective objects fp-injectivity is equivalent to injectivity, as mentioned above.

 A locally coherent category $\cA$ is \emphbf{locally finite} if every object in $\fp \cA$ has finite composition length, that is, $\fp \cA$ is a \emphbf{length category}. Locally finite categories have particularly nice properties. By \cite[Proposition 7.1]{Herzog1997}, the injective objects in such categories are precisely the fp-injectives and, consequently, the class of injectives is closed under coproducts. Moreover, all injective objects have finite endolength:

\begin{lemma}
\label{lem:injFinEndolength}
Suppose that $\cA$ is locally finite. Then every injective object in $\cA$ has finite endolength. 
\end{lemma}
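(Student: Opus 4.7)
The plan is to use induction on the composition length of $W$ in $\fp \cA$, which is finite by local finiteness of $\cA$, measuring by the length of $\Hom_\cA(W, X)$ as a left $\End_\cA(X)$-module (via postcomposition). The base case $W = 0$ is trivial.

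For the inductive step, suppose $W \neq 0$ and pick a simple subobject $S \subseteq W$ in $\fp \cA$; since $\fp \cA$ is an abelian subcategory, the quotient $W/S$ still lies in $\fp \cA$ and has strictly smaller length. Injectivity of $X$ makes $\Hom_\cA(-, X)$ exact, so applying it to $0 \to S \to W \to W/S \to 0$ yields a short exact sequence of $\End_\cA(X)$-modules
\[
0 \to \Hom_\cA(W/S, X) \to \Hom_\cA(W, X) \to \Hom_\cA(S, X) \to 0.
\]
By additivity of length in short exact sequences and the induction hypothesis applied to $W/S$, it suffices to show that $\Hom_\cA(S, X)$ has finite $\End_\cA(X)$-length whenever $S \in \fp \cA$ is simple.

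The central claim is that, for such $S$, the $\End_\cA(X)$-module $\Hom_\cA(S, X)$ is either zero or simple, and in particular of length at most one. To see this, I would take two nonzero morphisms $\phi, \psi : S \to X$; simplicity of $S$ forces both to be monomorphisms, and injectivity of $X$ allows the map $\psi \circ \phi^{-1} : \phi(S) \to X$ (where $\phi^{-1}$ inverts the induced isomorphism $S \xrightarrow{\sim} \phi(S)$) to extend to an endomorphism $e : X \to X$. A direct check gives $e \circ \phi = \psi$, so every nonzero element of $\Hom_\cA(S, X)$ generates the whole module over $\End_\cA(X)$; this is the simplicity.

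Assembling these ingredients, the induction closes and shows that $\Hom_\cA(W, X)$ has finite length over $\End_\cA(X)$ for all $W \in \fp \cA$, which is precisely the finite endolength of $X$. There is no real obstacle here; the only subtlety worth flagging is that injectivity of $X$ plays a double role—it supplies exactness of $\Hom_\cA(-, X)$ for the induction, and, independently, the extension property for morphisms from $\phi(S) \subseteq X$ to $X$ used in the simplicity step.
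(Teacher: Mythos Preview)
Your proof is correct and follows essentially the same approach as the paper: first establish that $\Hom_\cA(S,X)$ is zero or simple over $\End_\cA(X)$ for simple $S$ (using that nonzero maps from a simple are monic and then extending via injectivity), and then induct on the composition length of $W$ using exactness of $\Hom_\cA(-,X)$. The paper's argument is just a terser version of yours.
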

\begin{proof}
Consider an injective object $Q$ in $\cA$ and let $L$ be a simple object in $\fp \cA$ with $\Hom_{\cA}(L,Q)\neq 0$. Since every non-zero $f\in \Hom_{\cA}(L,Q)$ is a monomorphism and $Q$ is injective, every such $f$ generates $\Hom_{\cA}(L,Q)$ as a simple $\End_{\cA}(Q)$-module. By assumption, we can use induction on the composition length of every $W$ in $\fp \cA$ to conclude that $Q$ has finite endolength.
\end{proof}

 The prototype of a locally coherent category is the category $\Mod \cC$ of all additive contravariant functors from a skeletally small abelian or triangulated category $\cC$ to the category of abelian groups. Morphisms in $\Mod \cc$ are natural transformations between additive contravariant functors and a sequence of functors is a short exact sequence if and only if evaluating on each object of $\cc$ yields a short exact sequence in the category of abelian groups. We shall denote the category $\fp(\Mod \cC)$ simply by $\mod \cC$. The objects in $\mod \cC$ are exactly the functors which occur as cokernels of morphisms between \emphbf{representable functors}, i.e. between functors naturally isomorphic to $\Hom_{\cC}(-,X):\cC^{\op} \to \Ab$ for some $X$ in $\cC$. The direct factors of representables are the finitely presented projective objects in $\Mod \cC$. 
Let $H$ be an object in $\Mod \cC$ and consider an object $X$ in $\cC$. By identifying $\Hom_{\Mod \cC}(\Hom_\cC(-,X), H)$ with $H(X)$ via the Yoneda lemma,  $H(X)$ becomes a left module over the ring $\End_{\Mod \cC}(H)$. 

\begin{lemma}
\label{lem:finiteendolengthforfunctors}
	 An object $H$ in $\Mod\cC$ has finite endolength if and only if $H(X)$ has finite length as an $\End_{\Mod \cC}(H)$-module for every $X$ in $\cC$.
\end{lemma}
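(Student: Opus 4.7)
The plan is to leverage the Yoneda identification $\Hom_{\Mod\cC}(\Hom_\cC(-,X),H)\cong H(X)$ as $\End_{\Mod\cC}(H)$-modules, which is already recalled just before the statement, and then use projective presentations in $\mod \cC$ to reduce the general finitely presented case to the representable case.

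For the forward implication, I would simply specialise the definition of finite endolength: if $H$ has finite endolength, then $\Hom_{\Mod\cC}(W,H)$ has finite length over $\End_{\Mod\cC}(H)$ for every $W$ in $\mod\cC$. Applying this to $W=\Hom_\cC(-,X)$ and using the Yoneda identification shows that $H(X)$ has finite length as an $\End_{\Mod\cC}(H)$-module.

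For the converse, I would use that every $F$ in $\mod\cC$ admits a presentation
\[
\Hom_\cC(-,X)\longrightarrow \Hom_\cC(-,Y)\longrightarrow F\longrightarrow 0
\]
by representables, as recalled in the paragraph before the lemma. Applying the left exact functor $\Hom_{\Mod\cC}(-,H)$ and identifying the resulting representable Hom-groups with $H(X)$ and $H(Y)$ via Yoneda, we obtain an exact sequence
\[
0\longrightarrow \Hom_{\Mod\cC}(F,H)\longrightarrow H(Y)\longrightarrow H(X)
\]
of $\End_{\Mod\cC}(H)$-modules. By hypothesis $H(Y)$ has finite length, so its submodule $\Hom_{\Mod\cC}(F,H)$ also has finite length, and $H$ has finite endolength.

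I do not expect any real obstacle: the only thing to check carefully is that the Yoneda isomorphism is $\End_{\Mod\cC}(H)$-linear (which follows from functoriality of $\Hom_{\Mod\cC}(-,H)$ in the second variable), and that taking $\Hom_{\Mod\cC}(-,H)$ of a projective presentation produces an exact sequence of $\End_{\Mod\cC}(H)$-modules, which is immediate from the bimodule structure. Everything else is a direct application of definitions.
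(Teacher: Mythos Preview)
Your proof is correct and essentially identical to the paper's. The only cosmetic difference is that the paper uses just the epimorphism $\Hom_\cC(-,Y)\twoheadrightarrow F$ rather than the full two-term presentation, since the resulting monomorphism $\Hom_{\Mod\cC}(F,H)\hookrightarrow H(Y)$ of $\End_{\Mod\cC}(H)$-modules is already enough.
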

\begin{proof}
	One implication is clear, since representable functors are finitely presented. Conversely, suppose that $H(X)\cong \Hom_{\Mod \cC}(\Hom_\cC(-,X), H)$ is an $\End_{\Mod \cC}(H)$-module of finite length for every $X$ in $\cC$. Consider a functor $F$ in $\mod \cC$. There exists an epimorphism $\alpha:\Hom_\cC (-,X) \twoheadrightarrow F$ for some $X$ in $\cC$. By applying the functor $\Hom_{\Mod \cC} (-,H)$ to $\alpha$, we obtain a monomorphism $\Hom_{\Mod \cC} (F,H) \hookrightarrow \Hom_{\Mod \cC}(\Hom_\cC(-,X), H)$ of $\End_{\Mod \cC}(H)$-modules. Since $\Hom_{\Mod \cC}(\Hom_\cC(-,X), H)$ has finite length, so does $\Hom_{\Mod \cC} (F,H)$.
\end{proof}

\subsection{The Ziegler spectrum of a locally coherent category}
\label{subsec:zieglerspectrumlcc}
With every locally coherent category $\cA$ we may associate a topological space $\Zg \cA$, called
the \emphbf{Ziegler spectrum} of $\cA$. Its underlying set is the set of all isoclasses of indecomposable injective objects in $\cA$. 
Given a full subcategory $\cS$ of $\fp \cA$, define
\[
\co (\cS)\coloneqq\{[Q] \in \Zg \cA \mid \Hom_\cA (X,Q)\neq 0\text{ for some }X\text{ in }\cS \}.
\]
These are the open sets for the Ziegler topology on $\Zg \cA$.

Before stating the next result, recall that a full subcategory of $\ca$ is \emphbf{torsion} if it is closed under quotients, extensions and coproducts (see \cite{Dickson}). A torsion subcategory $\cx$ of $\cA$ is \emphbf{hereditary} if, in addition, it is closed under subobjects, and it is of \emphbf{finite type} if the right adjoint of the inclusion $\cx\hookrightarrow\cA$ commutes with filtered colimits. Finally, a subcategory $\cS$ of an abelian category is called \emphbf{Serre} if for every short exact sequence in the underlying abelian category
\[
\begin{tikzcd}
0 \ar[r] & X \ar[r] & Y \ar[r] & Z\ar[r] & 0,
\end{tikzcd}
\]
we have that $Y$ belongs to $\cS$ if and only if both $X$ and $Z$ belong to $\cS$. 

\begin{theorem}[{\cite[Theorems 2.8, 3.8]{Herzog1997}, \cite[Corollaries 2.10, 4.5]{Krause1997}}]
\label{thm:ZieglerSpecLocallyCohCat}
Let $\cA$ be a locally coherent category. The assignments $\cx \mapsto \cx \cap \fp \cA$ and $\cS \to \co (\cS)$ induce an inclusion-preserving bijective correspondence between, respectively:
	\begin{enumerate}[(a)]
	\item hereditary torsion subcategories of $\cA$ of finite type;
	\item Serre subcategories of $\fp \cA$;
	\item open sets of $\Zg \cA$.
	\end{enumerate}
\end{theorem}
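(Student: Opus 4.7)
The plan is to break the three-way correspondence into the bijections (a) $\leftrightarrow$ (b) and (b) $\leftrightarrow$ (c), exploiting the locally coherent structure of $\cA$ throughout.

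For the bijection between hereditary torsion subcategories of finite type and Serre subcategories of $\fp\cA$, I would first observe that if $\cx \subseteq \cA$ is such a torsion subcategory, then closure under subobjects, quotients and extensions in $\cA$ restricts to the abelian subcategory $\fp\cA$, so $\cx \cap \fp\cA$ is Serre. For the inverse, given a Serre subcategory $\cS \subseteq \fp\cA$, I would form $\overrightarrow{\cS}$, the full subcategory of $\cA$ consisting of those objects all of whose finitely presented subobjects lie in $\cS$, equivalently the closure of $\cS$ under filtered colimits in $\cA$. Exactness of filtered colimits in the Grothendieck category $\cA$, together with the Serre property of $\cS$, yields closure under subobjects, quotients, extensions and coproducts, while the finite-type property is automatic by construction. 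The verifications that $\overrightarrow{\cx \cap \fp\cA} = \cx$ and $\overrightarrow{\cS} \cap \fp\cA = \cS$ rely, respectively, on the finite-type hypothesis on $\cx$ (which lets one reconstruct it from its finitely presented members) and on the fact that a finitely presented object lying inside a filtered colimit of objects from $\cS$ factors through some finite stage.

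For the bijection between Serre subcategories of $\fp\cA$ and open sets of $\Zg\cA$, I would first verify that every open set arises as $\co(\cS)$ for some Serre $\cS$: for arbitrary $\cS' \subseteq \fp\cA$ one has $\co(\cS') = \co(\langle\cS'\rangle)$, where $\langle\cS'\rangle$ denotes the Serre closure, since subobjects, quotients and extensions of objects in $\cS'$ map nontrivially into precisely the same indecomposable injectives as $\cS'$ itself. The reconstruction sending an open set $U$ back to the subcategory
\[
\{X \in \fp\cA : \Hom_\cA(X, Q) = 0 \text{ for all } [Q] \notin U\}
\]
is a candidate inverse to $\cS \mapsto \co(\cS)$, and the crux is to show that the torsion-free class $(\overrightarrow{\cS})^{\perp}$ in $\cA$ is cogenerated by the indecomposable injectives outside $\co(\cS)$. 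This rests on the existence of injective envelopes in $\cA$ and on the fact that the localisation $\cA/\overrightarrow{\cS}$ is again locally coherent, which in turn uses that $\overrightarrow{\cS}$ is of finite type.

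The main obstacle is the reconstruction statement that an object $X \in \fp\cA$ lies in $\cS$ if and only if $\Hom_\cA(X, Q) = 0$ for every $[Q] \in \Zg\cA \setminus \co(\cS)$. Proving this amounts to identifying $\Zg\cA \setminus \co(\cS)$ with the Ziegler spectrum of the quotient $\cA/\overrightarrow{\cS}$, via the section functor of the finite-type localisation, which is only available because $\overrightarrow{\cS}$ is hereditary torsion of finite type. Once this input is in place, inclusion-preservation is immediate from the definitions of both assignments, and composing the two pairwise bijections delivers the stated three-way correspondence.
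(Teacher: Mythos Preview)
The paper does not supply a proof of this theorem: it is stated as a background result and attributed to \cite[Theorems 2.8, 3.8]{Herzog1997} and \cite[Corollaries 2.10, 4.5]{Krause1997}, with no accompanying argument. So there is nothing in the paper to compare your proposal against.

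That said, your outline is broadly faithful to the arguments in those cited sources: the decomposition into the two bijections (a)$\leftrightarrow$(b) and (b)$\leftrightarrow$(c), the construction of $\overrightarrow{\cS}$ as the filtered-colimit closure of $\cS$, and the identification of $\Zg\cA\setminus\co(\cS)$ with $\Zg(\cA/\overrightarrow{\cS})$ via the section functor are exactly the ingredients Herzog and Krause use. One point where your sketch is a bit thin is the claim that $\overrightarrow{\cS}$ is closed under subobjects: this does not follow immediately from exactness of filtered colimits, and in the references it is handled by first showing that $\overrightarrow{\cS}$ coincides with the class of objects all of whose finitely generated (equivalently, finitely presented) subobjects lie in $\cS$. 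You allude to this equivalent description but do not use it where it is needed. Apart from that, your proposal is a correct high-level summary of the standard proof.
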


\subsection{Additive functions on abelian categories}
\label{subsec:additivefunctionslcc}
Let $\cA$ be a locally coherent category and let $H$ be an injective object in $\cA$ which has finite endolength. 
We can assign to any object $W$ in $\fp \cA$ the non-negative integer
\begin{equation}
\label{eq:addFuncFromObj}
\widetilde{\rho}_H(W)\coloneqq\length_{\End_{\cA} (H)}\Hom_\cA(W,H).
\end{equation}
This assignment is additive on short exact sequences in $\fp \cA$, i.e.  
\[
\widetilde{\rho}_H(Y)=\widetilde{\rho}_H(X)+\widetilde{\rho}_H(Z) \text{ for any short exact sequence } \begin{tikzcd}[column sep=scriptsize]
0 \ar[r] & X \ar[r] & Y \ar[r] & Z\ar[r] & 0
\end{tikzcd} \text{ in } \fp \cA.
\]

\begin{definition}
	\label{def:additiveZfunction}
	An \emphbf{additive function} $\widetilde{\rho}$ on a skeletally small abelian category $\cF$ is an assignment of a non-negative real number $\widetilde{\rho}(X)$ to each object $X$ of $\cF$, which is additive on short exact sequences.
	We call $\widetilde{\rho}$ \emphbf{integral} if it takes values in $\Z$.
\end{definition}

Note that defining an additive function on $\cF$ is equivalent to defining a group morphism $\widetilde{\rho}:K_0 (\cF)\to \mathbb{R}$ satisfying $\widetilde{\rho}([W])\geq 0$ for every $W$ in $\cF$. It is clear that the sum of additive functions is again an additive function. In fact, any family $(\widetilde{\rho}_i)_{i\in I}$ of additive functions on $\cF$ for which the set $\{i \in I \mid \widetilde{\rho}_i(W)\neq 0\}$ is finite for every $W$ in $\cF$ gives rise to a well-defined additive function $\sum_{i\in I} \widetilde{\rho}_i$. This function is called the \emphbf{locally finite sum} of $(\widetilde{\rho}_i)_{i\in I}$.
Another way of creating new additive functions out of existing ones is by precomposition with an exact functor.

\begin{lemma}
\label{lem:AddFuncPrecomposedFunctor}
	Let $\cF$ and $\cF'$ be skeletally small abelian categories and suppose that $\Gamma:\cF \to \cF'$ is an exact functor. Consider an additive function $\widetilde{\rho}$ on $\cF'$. The assignment $\widetilde{\rho}^\Gamma(W)\coloneqq\widetilde{\rho}(\Gamma (W))$ for $W$ in $\cF$ defines an additive function $\widetilde{\rho}^\Gamma$ on $\cF$. If $\Gamma$ is essentially surjective and the additive function $\widetilde{\rho}$ decomposes as a sum of two non-zero additive functions, then $\widetilde{\rho}^\Gamma$ decomposes in this way as well.
\end{lemma}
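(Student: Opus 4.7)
The plan is to verify the two claims separately, both of which are short bookkeeping arguments once one unpacks the definitions.

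First I would establish that $\widetilde{\rho}^\Gamma$ is an additive function on $\cF$. Non-negativity is immediate from the definition, since $\widetilde{\rho}^\Gamma(W) = \widetilde{\rho}(\Gamma(W))\geq 0$. For additivity, take a short exact sequence
\[
\begin{tikzcd}
0 \ar[r] & X \ar[r] & Y \ar[r] & Z\ar[r] & 0
\end{tikzcd}
\]
in $\cF$. Because $\Gamma$ is exact, its image
\[
\begin{tikzcd}
0 \ar[r] & \Gamma(X) \ar[r] & \Gamma(Y) \ar[r] & \Gamma(Z)\ar[r] & 0
\end{tikzcd}
\]
is a short exact sequence in $\cF'$, and applying the additivity of $\widetilde{\rho}$ to it gives $\widetilde{\rho}^\Gamma(Y)=\widetilde{\rho}^\Gamma(X)+\widetilde{\rho}^\Gamma(Z)$.

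For the second claim, suppose $\widetilde{\rho}=\widetilde{\rho}_1+\widetilde{\rho}_2$ with $\widetilde{\rho}_1,\widetilde{\rho}_2$ non-zero additive functions on $\cF'$. The first part of the lemma applied to each summand produces additive functions $\widetilde{\rho}_1^\Gamma$ and $\widetilde{\rho}_2^\Gamma$ on $\cF$, and directly from the definition one has $\widetilde{\rho}^\Gamma = \widetilde{\rho}_1^\Gamma + \widetilde{\rho}_2^\Gamma$. It remains to show that each $\widetilde{\rho}_i^\Gamma$ is non-zero. Since $\widetilde{\rho}_i\neq 0$, there exists an object $X_i$ in $\cF'$ with $\widetilde{\rho}_i(X_i)\neq 0$. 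Using that $\Gamma$ is essentially surjective, pick $W_i$ in $\cF$ with $\Gamma(W_i)\cong X_i$. Because additive functions factor through $K_0$ and are therefore invariant under isomorphism, $\widetilde{\rho}_i^\Gamma(W_i)=\widetilde{\rho}_i(\Gamma(W_i))=\widetilde{\rho}_i(X_i)\neq 0$, as required.

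There is no real obstacle here: the whole statement reduces to the observation that exact essentially surjective functors transport additive functions and cannot annihilate non-zero ones. I would write the argument in a single short paragraph in the paper itself.
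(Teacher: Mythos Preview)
Your proof is correct and follows essentially the same approach as the paper: the first part is immediate from exactness of $\Gamma$, and the second part uses essential surjectivity to show that neither $\widetilde{\rho}_1^\Gamma$ nor $\widetilde{\rho}_2^\Gamma$ can vanish. The paper's proof is terser (it simply says ``the first part is clear'') but the argument is the same.
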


\begin{proof}
	The first part is clear. Suppose that $\Gamma$ is essentially surjective and that $\widetilde{\rho}=\widetilde{\rho}_1+\widetilde{\rho}_2$ is a decomposition of $\widetilde{\rho}$ as a sum of two non-zero additive functions. Then $\widetilde{\rho}^ \Gamma=\widetilde{\rho}_1^ \Gamma+\widetilde{\rho}_2^ \Gamma$ and neither $\widetilde{\rho}_1^ \Gamma$ nor $\widetilde{\rho}_2^ \Gamma$ can be zero, since every $W'$ in $ \cF '$ is isomorphic to $\Gamma (W)$ for some $W$ in $ \cF$.
\end{proof}

An additive function is \emphbf{irreducible} if it is non-zero,  integral, and cannot be expressed as a sum of two non-zero integral additive functions. As indicated above, finite endolength injective objects in $\cA$ produce integral additive functions on $\fp\cA$. To a certain extent, the converse is also true.

\begin{theorem}[{\cite{Crawley-Boevey1994a}}]
\label{thm:IrrAdditiveFunctions<->InjFinEndolength}
	Let $\cA$ be a locally coherent category. Following the notation in \eqref{eq:addFuncFromObj}, the assignment $[H]\mapsto \widetilde{\rho}_{H}$ induces a bijective correspondence between:
	\begin{enumerate}[(a)]
		\item isoclasses of indecomposable injective objects in $\cA$ of finite endolength;
		\item irreducible additive functions on $\fp\cA$.
	\end{enumerate}
Moreover, every integral additive function on a skeletally small abelian category can be written in a unique way as a locally finite sum of irreducibles.
\end{theorem}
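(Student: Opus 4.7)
The plan is to prove both the bijection and the decomposition theorem simultaneously by factoring each integral additive function through an explicit locally finite quotient of $\fp\cA$ and then reading off its decomposition from the simple objects of that quotient. For the final statement about an arbitrary skeletally small abelian category $\cF$, one reduces to the locally coherent setting by embedding $\cF$ as $\fp\cA$ for $\cA$ the ind-completion of $\cF$. The forward map $[H]\mapsto\widetilde{\rho}_H$ is routine to check: injectivity of $H$ makes $\Hom_\cA(-,H)$ exact and $\length_{\End_\cA(H)}$ is additive on short exact sequences of $\End_\cA(H)$-modules, so $\widetilde{\rho}_H$ is additive, while finite endolength guarantees integer values.

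For the main construction, given any integral additive function $\widetilde{\rho}$ on $\fp\cA$, set $\cS_{\widetilde{\rho}}\coloneqq\{X\in\fp\cA\mid\widetilde{\rho}(X)=0\}$. Additivity forces $\cS_{\widetilde{\rho}}$ to be a Serre subcategory, and the induced function $\bar{\widetilde{\rho}}$ on the quotient $\bar{\cF}\coloneqq\fp\cA/\cS_{\widetilde{\rho}}$ is faithful with $\bar{\widetilde{\rho}}(X)\geq 1$ for every non-zero $X$. Since $\bar{\widetilde{\rho}}$ strictly decreases along strict chains of subobjects or quotient objects, $\bar{\cF}$ is an abelian length category. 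By Theorem \ref{thm:ZieglerSpecLocallyCohCat}, $\cS_{\widetilde{\rho}}$ is of the form $\cx\cap\fp\cA$ for a hereditary torsion subcategory of finite type $\cx\subseteq\cA$, and $\bar{\cF}\simeq\fp(\cA/\cx)$, so $\cA/\cx$ is locally finite.

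Writing $\pi\colon\cA\to\cA/\cx$ for the quotient and $\iota\colon\cA/\cx\hookrightarrow\cA$ for its fully faithful right adjoint, for each isoclass $[S]$ of a simple object in $\fp(\cA/\cx)$ let $H_S$ be the injective envelope of $S$ in $\cA/\cx$. By Lemma \ref{lem:injFinEndolength}, $H_S$ has finite endolength, so $\iota(H_S)$ is an indecomposable injective of finite endolength in $\cA$. Jordan--H\"older in $\fp(\cA/\cx)$ produces well-defined multiplicities $m_{[S]}(X)$ of $S$ in $\pi(X)$, and a calculation combining the adjunction with the identification of morphisms into an injective envelope as counting composition factors yields
\[
\widetilde{\rho}_{\iota(H_S)}(X)=m_{[S]}(X),\qquad \widetilde{\rho}=\sum_{[S]}\bar{\widetilde{\rho}}(S)\,\widetilde{\rho}_{\iota(H_S)},
\]
a locally finite sum because each $\pi(X)$ has finite length. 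This gives the existence of the decomposition and, taking $\widetilde{\rho}$ irreducible, shows surjectivity of the forward map.

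The most delicate part is rigidity: irreducibility of each $\widetilde{\rho}_{\iota(H_S)}$, injectivity of $[H]\mapsto\widetilde{\rho}_H$ on indecomposable finite-endolength injectives, and uniqueness of the decomposition. The common mechanism is that any integral additive function $\widetilde{\sigma}$ with $0\leq\widetilde{\sigma}\leq\widetilde{\rho}$ pointwise satisfies $\cS_{\widetilde{\rho}}\subseteq\cS_{\widetilde{\sigma}}$ and therefore factors through $\bar{\cF}$; since additive functions on the length category $\bar{\cF}$ are freely parametrised by their values on simples, any decomposition of $\widetilde{\rho}$ must be a regrouping of the canonical one. This forces irreducibility to be equivalent to ``value $1$ on a single simple of $\bar{\cF}$'', pins down the inverse map as $\widetilde{\rho}\mapsto\iota(H_S)$ for the unique such simple $S$, and yields uniqueness of the locally finite decomposition. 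I expect the main obstacle to be the adjunction-based identification $\widetilde{\rho}_{\iota(H_S)}=m_{[S]}^\pi$ -- specifically, keeping the $\End$-module structures straight and verifying that the length of $\Hom_{\cA/\cx}(\pi(X),H_S)$ over $\End_{\cA/\cx}(H_S)$ coincides with the multiplicity of $S$ in a composition series of $\pi(X)$.
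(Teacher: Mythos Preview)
Your proposal is correct and follows essentially the same route as the standard Crawley-Boevey/Herzog argument that the paper cites for this theorem and whose mechanics reappear in the paper's Appendix~A (Proposition~\ref{prop:IrrAdditiveFunctions<->InjFinEndolength2}): localise at $\Ker\widetilde{\rho}$, observe the quotient is a length category, and read off the irreducible constituents from the simple objects via their injective envelopes. The identification $\widetilde{\rho}_{\iota(H_S)}(X)=m_{[S]}(\pi(X))$ that you flag as the main obstacle is exactly the computation carried out in the paper's proof of Proposition~\ref{prop:IrrAdditiveFunctions<->InjFinEndolength2}(1), using that $\iota$ is fully faithful (so the $\End$-module structures match via adjunction, as in item~(4) of Subsection~\ref{subsection:fundamental_corresp}) and that $\Hom_{\cA/\cx}(L,H_S)$ is a simple $\End(H_S)$-module when $L\cong S$ and zero otherwise; you should also invoke item~(3) of Subsection~\ref{subsection:fundamental_corresp} to know $\iota(H_S)$ is injective in $\cA$, which you use implicitly.
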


\noindent The second part of Theorem \ref{thm:IrrAdditiveFunctions<->InjFinEndolength} follows from \cite{Crawley-Boevey1994a} by observing that the correspondence $\cA \mapsto \fp \cA$ provides a bijection between locally coherent categories and skeletally small abelian categories (up to equivalence). The inverse maps a skeletally small abelian category to its (additive) ind-completion -- this is a particular case of the correspondence in \cite[\S1.4]{Crawley-Boevey1994} (see also \cite[\S 2, Proposition 2]{Roos1969}). Alternatively, see \cite[Lemma 13.1.1]{Krause2022}.

\subsection{Fundamental correspondence for integral additive functions}
\label{subsection:fundamental_corresp}
In this subsection, we define basic additive functions on $\fp\cA$, extend Theorem \ref{thm:IrrAdditiveFunctions<->InjFinEndolength} to a bijection between these functions and isoclasses of basic injective objects in $\cA$ of finite endolength, and link this bijection to Theorem \ref{thm:ZieglerSpecLocallyCohCat} by restricting the corresponding classes therein. To this end, we recall some background on Serre localisations dating back to \cite{Gabriel1962} (see also \cite{Herzog1997,Krause1997}).

Let $\cA$ be locally coherent and $\cS$ be a Serre subcategory of $\fp \cA$. We denote by $\overrightarrow{\cS}$ the smallest torsion subcategory of $\cA$ containing $\cS$. The category $\overrightarrow{\cS}$ is the hereditary torsion subcategory of $\cA$ associated with $\cS$ in Theorem \ref{thm:ZieglerSpecLocallyCohCat} and it is precisely the closure of $\cS$ under filtered colimits in $\cA$ (see \cite[\S 4.4]{Crawley-Boevey1994}). Since $\overrightarrow{\cS}$ is a hereditary torsion subcategory of $\cA$ of finite type, we get a localisation sequence
\begin{equation}
		\label{eq:loc}
		\begin{tikzcd}
		\overrightarrow{\cS} \ar[r, "I"] & \cA \ar[r, "E"] \ar[l, "P",bend left]  & \cA/\overrightarrow{\cS}, \ar[start anchor=-161,l,"R",bend left]
		\end{tikzcd}
\end{equation}	
where both $\overrightarrow{S}$ and the Serre localisation $\ca/\overrightarrow{S}$ are locally coherent categories. This satisfies the following properties, which will be of particular importance in the proof of Theorem \ref{thm:lastPreliminaries} (see Appendix \ref{Appendix A}):
\begin{enumerate}
			\item $I$ is the inclusion and $E$ is the Serre localisation functor with $\Imm I=\Ker E$. The functors $(I,P)$ and $(E,R)$ form adjoint pairs. 
			The functor $R$ is fully faithful and identifies $\cA/\overrightarrow{\cS}$ with the full subcategory of $\cA$ given by all objects $X$ with $\Hom_\cA(\overrightarrow{\cS},X)=0=\Ext_\cA^1(\overrightarrow{\cS},X).$ 
			\item The functors $R$ and $P$ commute with filtered colimits, and so the functors $I$ and $E$ restrict to finitely presented objects. More precisely, $\fp\overrightarrow{\cS}=\cS$ and 
			$$(\fp\cA)/\cS\simeq\fp(\cA/\overrightarrow{\cS})=\Imm(E|_{\fp \cA}).$$
			\item An object $\overline{Q}$ in $\cA/\overrightarrow{\cS}$ is injective if and only if $R(\overline{Q})$ is injective in $\cA$. 
			The assignment $[\overline{Q}]\mapsto[R(\overline{Q})]$ induces a homeomorphism from the Ziegler spectrum $\Zg(\cA/\overrightarrow{\cS})$ of $\cA/\overrightarrow{\cS}$ to the closed subset $\Zg \cA \setminus \co (\cS)$ of $\Zg\cA$ endowed with the subspace topology (\cite[Proposition 3.6]{Herzog1997}, \cite[Corollary 4.4]{Krause1997}).
			\item The functor $R$ preserves and reflects finite endolength: an object $Z$ in $\cA/\overrightarrow{S}$ has finite endolength if and only if $R(Z)$ has finite endolength. Indeed, for every $W$ in $\fp\cA$ and $Z$ in $\cA/\overrightarrow{\cS}$, the natural isomorphism 
\[
	\begin{tikzcd}
	\Hom_{\cA/\overrightarrow{\cS}}(E(W),Z)\ar[r, "\psi_{W,Z}"] & \Hom_{\cA}(W,R(Z))
	\end{tikzcd}
	\]
	 satisfies $\psi_{W,Z}(\alpha \circ f)=R(\alpha) \circ \psi_{W,Z}(f)$ for $f\in\Hom_{\cA/\overrightarrow{\cS}}(E(W),Z)$ and $\alpha\in \End_{\cA/\overrightarrow{\cS}} (Z)$. Moreover, $R$ induces a ring isomorphism $\End_{\cA/\overrightarrow{\cS}} (Z)\to \End_{\cA} (R(Z))$. Hence, the structure of $\Hom_{\cA/\overrightarrow{\cS}}(E(W),Z)$ as an $\End_{\cA/\overrightarrow{\cS}} (Z)$-module coincides with that of $\Hom_{\cA}(W,R(Z))$ as an $\End_{\cA} (R(Z))$-module. Since $\fp(\ca/\overrightarrow{S})$ coincides with $\Imm( E|_{\fp \cA})$, $Z$ has finite endolength if and only if $R(Z)$ does.
\end{enumerate}

\begin{remark}
The left-hand side of the localisation sequence \eqref{eq:loc} is less well-behaved in terms of endolength. It is not difficult to see that $Z$ in $\overrightarrow{\cS}$ has finite endolength whenever $I(Z)$ does. However, the converse is not true. For a counterexample, take $\ca$ to be $\Ab=\Mod \Z$ and let $\cS$ be the subcategory of finite abelian groups. Note that $\fp \cA$ is the category of finitely generated abelian groups and $\cS$ is a Serre subcategory of $\fp \cA$. In fact, $\cS$ is a length category.
For every prime $p$, the Pr\"ufer group $\Z(p^\infty)$ is the filtered colimit over $(\N,\leq)$ of the monomorphisms $\Z/p^n \Z \hookrightarrow \Z/ p^{n+1}$ induced by multiplication with $p$. Hence, every $\Z(p^\infty)$ belongs to the locally finite category $\overrightarrow{\cS}$. Moreover, each $\Z(p^\infty)$ is an injective in $\cA$ (see \cite[Theorem 3.4, Remark (2)]{Matlis1958}) and, consequently, also in $\overrightarrow{\cS}$, so Lemma \ref{lem:injFinEndolength} guarantees that $\Z(p^\infty)$ has finite endolength in $\overrightarrow{\cS}$. However, $\Z(p^\infty)$ does not have finite endolength in $\cA$. This is because $\Hom_{\ca}(\Z,\Z(p^\infty))\cong \Z(p^\infty)$ has infinite length as a module over $\End_{\ca}(\Z(p^\infty))$. To see this, note that $\End_{\ca}(\Z(p^\infty))$ is the ring $\Z_p$ of the $p$-adic integers, i.e.~up to isomorphism, $\End_{\ca}(\Z(p^\infty))$ is the filtered limit over $(\N,\leq^{\op})$ of the epimorphisms $\Z/p^{n+1} \Z \twoheadrightarrow \Z/ p^{n}\Z$. By construction, there exist ring morphisms $\Z_p \to \Z/p^n\Z$ for every $n\in \N$. These turn each $\Z/p^n\Z$ into a module over $\Z_p$, and the monomorphisms $\Z/p^n \Z \hookrightarrow \Z/ p^{n+1}\Z$ give an infinite chain of $\Z_p$-submodules of $\Z(p^\infty)$.
\end{remark}

Given an additive function $\widetilde{\rho}$ on a skeletally small abelian category $\cF$, define
\[\Ker \widetilde{\rho}\coloneqq\{X \text{ in }\cF \mid \widetilde{\rho}(X)=0 \}.
\]
Observe that $\Ker\widetilde{\rho}$ is a Serre subcategory of $\cF$.
The next proposition follows from \cite{Herzog1997} using the bijection between locally coherent categories and skeletally small abelian categories.

\begin{proposition}[{\cite[Proposition 8.4]{Herzog1997}}]
	\label{prop:LocAdditiveFunctions}
Let $\cF$ be a skeletally small abelian category and let $\widetilde{\rho}$ be an additive function on $\cF$.  If $\widetilde{\rho}$ is integral, then $\cF/\Ker\widetilde{\rho}$ is a length category.
\end{proposition}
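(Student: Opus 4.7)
The plan is to push the function $\widetilde{\rho}$ down to the Serre quotient $\cF' \coloneqq \cF/\Ker\widetilde{\rho}$ and to exploit the fact that the induced function detects the zero object in order to bound the length of any chain of subobjects.

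First I would construct the induced integral additive function $\overline{\widetilde{\rho}}$ on $\cF'$. Let $E \colon \cF \to \cF'$ denote the Serre localisation functor. By the well-known right exactness of $K_0$ applied to the Serre localisation, the sequence $K_0(\Ker\widetilde{\rho}) \to K_0(\cF) \to K_0(\cF') \to 0$ is exact, so the kernel of the second map is generated by the classes $[S]$ with $S$ in $\Ker\widetilde{\rho}$. Since $\widetilde{\rho} \colon K_0(\cF) \to \Z$ vanishes on these classes by definition, it descends to a homomorphism $\overline{\widetilde{\rho}} \colon K_0(\cF') \to \Z$ with $\overline{\widetilde{\rho}} \circ K_0(E) = \widetilde{\rho}$. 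Non-negativity on objects is automatic from essential surjectivity of $E$: any $Y$ in $\cF'$ is isomorphic to $E(X)$ for some $X$ in $\cF$, whence $\overline{\widetilde{\rho}}(Y) = \widetilde{\rho}(X) \geq 0$. If one prefers to avoid $K$-theoretic language, the same function can be defined directly by setting $\overline{\widetilde{\rho}}(Y) \coloneqq \widetilde{\rho}(X)$ for an arbitrary lift $X$ of $Y$, with well-definedness checked via the calculus of fractions describing morphisms in $\cF'$.

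The critical observation is that $\overline{\widetilde{\rho}}$ is \emph{faithful} in the sense that $\overline{\widetilde{\rho}}(Y) = 0$ forces $Y \cong 0$: for $Y \cong E(X)$, the identity $\widetilde{\rho}(X) = 0$ places $X$ in $\Ker\widetilde{\rho}$ and hence kills $Y$ in $\cF'$. From this the length property follows at once. For any object $Y$ of $\cF'$ and any strict chain of subobjects $0 = Y_0 \subsetneq Y_1 \subsetneq \cdots \subsetneq Y_n = Y$, additivity of $\overline{\widetilde{\rho}}$ applied to the short exact sequences $0 \to Y_{i-1} \to Y_i \to Y_i/Y_{i-1} \to 0$ yields $\overline{\widetilde{\rho}}(Y) = \sum_{i=1}^n \overline{\widetilde{\rho}}(Y_i/Y_{i-1}) \geq n$, because each successive quotient is non-zero and $\overline{\widetilde{\rho}}$ then takes a value at least $1$. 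Consequently every object of $\cF'$ is simultaneously artinian and noetherian, admits a composition series of length at most $\overline{\widetilde{\rho}}(Y)$, and $\cF'$ is a length category.

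The main obstacle I expect is verifying that $\widetilde{\rho}$ descends to a well-defined additive function on $\cF'$; once this is settled, the bound on chain lengths via additivity is essentially automatic. An alternative route that sidesteps any appeal to localisation of $K_0$ is to argue directly with the universal property of the Serre quotient, using that any two lifts of an object $Y$ are connected by a roof whose legs have kernels and cokernels in $\Ker \widetilde{\rho}$.
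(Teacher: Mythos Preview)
Your argument is correct and self-contained. The paper does not spell out a proof; it merely cites \cite[Proposition 8.4]{Herzog1997} and remarks that the statement follows from Herzog's locally coherent version via the correspondence $\cA \mapsto \fp\cA$ between locally coherent categories and skeletally small abelian categories. Your route is more direct and elementary: you descend $\widetilde{\rho}$ to the Serre quotient using the right-exact sequence $K_0(\Ker\widetilde{\rho}) \to K_0(\cF) \to K_0(\cF/\Ker\widetilde{\rho}) \to 0$, observe that the induced integral additive function has trivial kernel, and bound the length of any strict chain of subobjects by the value of the function on the ambient object. The paper's approach has the virtue of keeping the statement inside the locally coherent framework it relies on elsewhere; yours avoids any detour through ind-completions and makes the mechanism transparent.
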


An object $H$ in a locally coherent category shall be called \emphbf{basic} if it can be decomposed as a coproduct of pairwise non-isomorphic indecomposable objects with local endomorphism rings. Similarly, say that an additive function is \emphbf{basic} if it is integral and the summands appearing in its decomposition as a locally finite sum of irreducible additive functions are pairwise distinct. The following corollary can be deduced from the proof of Theorem \ref{thm:IrrAdditiveFunctions<->InjFinEndolength} appearing in \cite[Theorem 8.6]{Herzog1997}.

\begin{corollary}
\label{cor:charac_loc_finite_abelian}
    A skeletally small abelian category $\cF$ is a length category if and only if it admits a basic additive function $\widetilde{\rho}$ with $\Ker \widetilde{\rho}=0$. In this case, the function $\widetilde{\rho}$ is unique and it is given by the composition length of objects in $\cF$.
\end{corollary}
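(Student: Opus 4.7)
The plan is to split the proof into three parts, using the ind-completion $\cA$ of $\cF$ (which is locally coherent with $\fp\cA\simeq\cF$ and locally finite precisely when $\cF$ is a length category), together with Theorem \ref{thm:IrrAdditiveFunctions<->InjFinEndolength} and Proposition \ref{prop:LocAdditiveFunctions}.

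For the ``if'' direction, suppose $\cF$ admits a basic additive function $\widetilde{\rho}$ with $\Ker \widetilde{\rho}=0$. Since basic additive functions are by definition integral, Proposition \ref{prop:LocAdditiveFunctions} immediately yields that $\cF=\cF/\Ker \widetilde{\rho}$ is a length category.

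For the ``only if'' direction, assume $\cF$ is a length category and set $\widetilde{\rho}(X):=\length_{\cF}(X)$. This assignment is integer-valued, additive on short exact sequences, and has trivial kernel. To see that it is basic, I would decompose it via Theorem \ref{thm:IrrAdditiveFunctions<->InjFinEndolength}: in the locally finite category $\cA$, every indecomposable injective is the injective envelope $E(S)$ of a simple object $S$ in $\cF$, and each such $E(S)$ has finite endolength by Lemma \ref{lem:injFinEndolength}. The key point is to establish the identity $\widetilde{\rho}_{E(S)}(W)=[W:S]$, where $[W:S]$ denotes the Jordan--H\"older multiplicity. To check this, first inspect simple objects $T$: if $T\not\cong S$, then $\Hom_\cA(T,E(S))=0$ because the socle of $E(S)$ is $S$; and if $T\cong S$, then every non-zero map $S\to E(S)$ factors through the socle, so $\Hom_\cA(S,E(S))\cong \End_\cA(S)$ is one-dimensional over the division ring $\End_\cA(S)$, and hence a simple $\End_\cA(E(S))$-module of length one. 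Additivity then gives $\widetilde{\rho}_{E(S)}(W)=[W:S]$ for every $W$ in $\cF$, and summing over isoclasses of simples yields $\length_{\cF}=\sum_S\widetilde{\rho}_{E(S)}$, a locally finite sum of pairwise distinct irreducibles, hence basic.

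For uniqueness, let $\widetilde{\rho}$ be any basic additive function on the length category $\cF$ with $\Ker \widetilde{\rho}=0$. Decompose $\widetilde{\rho}=\sum_{i\in I}\widetilde{\rho}_{H_i}$ via Theorem \ref{thm:IrrAdditiveFunctions<->InjFinEndolength}, with the $H_i$ pairwise non-isomorphic; by the preceding paragraph each $H_i\cong E(S_i)$. For any simple $S$ of $\cF$, the hypothesis $\widetilde{\rho}(S)>0$ forces $\Hom_\cA(S,H_i)\neq 0$ for some $i$, whence $H_i\cong E(S)$. Combined with the no-repetition condition, this identifies the index set with the isoclasses of simples of $\cF$, giving $\widetilde{\rho}=\sum_S\widetilde{\rho}_{E(S)}=\length_{\cF}$. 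The main technical obstacle is the length-one computation for $\Hom_\cA(S,E(S))$ as an $\End_\cA(E(S))$-module; everything else is a direct assembly of Theorem \ref{thm:IrrAdditiveFunctions<->InjFinEndolength}, Proposition \ref{prop:LocAdditiveFunctions}, and Lemma \ref{lem:injFinEndolength}.
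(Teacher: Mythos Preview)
Your proof is correct. The ``if'' direction is identical to the paper's, and your uniqueness argument is essentially the same idea, just unpacked more explicitly.

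The genuine difference is in the ``only if'' direction. The paper stays entirely inside $\cF$: since any additive function on a length category is determined by its values on simples, the integral functions $\widetilde{\rho}_i$ defined by $\widetilde{\rho}_i(L_j)=\delta_{ij}$ are immediately seen to form the complete list of irreducibles, and $\widetilde{\ell}=\sum_i\widetilde{\rho}_i$ is basic by inspection. You instead pass to the ind-completion $\cA$, invoke Lemma~\ref{lem:injFinEndolength} and the bijection in Theorem~\ref{thm:IrrAdditiveFunctions<->InjFinEndolength} to identify irreducibles as $\widetilde{\rho}_{E(S)}$, and then compute $\widetilde{\rho}_{E(S)}(W)=[W:S]$. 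Your route is more conceptual---it ties the statement into the general machinery and recovers the Jordan--H\"older multiplicities as the irreducible summands---but it is also heavier: the paper's argument needs neither the ind-completion nor Lemma~\ref{lem:injFinEndolength}, and the irreducibility of the delta functions is a one-line observation rather than a computation with injective envelopes. Both arrive at the same decomposition, so the choice is a matter of taste; the paper's version is the shorter path here.
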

\begin{proof}
Let $\cF$ be a length category. The composition length of objects yields an integral additive function $\widetilde{\ell}$ with trivial kernel. To see that it is basic, suppose
 that $\{L_i\}_{i\in I}$ is the set of representatives of isoclasses of simple objects in $\cF$.
 Due to additivity, any additive function on $\cF$ is completely determined by its values on the simples $\{L_i\}_{i\in I}$.
 The integral additive functions $\widetilde{\rho}_i$ given by $\widetilde{\rho}_i(L_j)=\delta_{ij}$ form a complete set of irreducible additive functions on $\cF$. Thus, $\widetilde{\ell}=\sum_{i\in I}\widetilde{\rho}_i$ is a basic additive function on $\cF$ with trivial kernel.  The uniqueness follows from the last part of Theorem \ref{thm:IrrAdditiveFunctions<->InjFinEndolength}. Conversely, suppose that $\cF$ admits a basic additive function $\widetilde{\rho}$ with trivial kernel. Then $\cF/\Ker \widetilde{\rho}\simeq\cF$ is a length category by Proposition \ref{prop:LocAdditiveFunctions}.
\end{proof}

As a consequence of Corollary \ref{cor:charac_loc_finite_abelian}, we can deduce the converse of Lemma \ref{lem:injFinEndolength}.

\begin{lemma}
\label{lem:convInjFinEndolength}
Suppose that $\cA$ is a locally coherent category such that every injective has finite endolength. Then the category $\cA$ is locally finite.
\end{lemma}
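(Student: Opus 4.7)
The plan is to produce a single integral additive function on $\fp\cA$ with zero kernel; then Proposition~\ref{prop:LocAdditiveFunctions} will immediately give that $\fp\cA=\fp\cA/\Ker\widetilde{\rho}$ is a length category, which is exactly the statement that $\cA$ is locally finite.

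To construct such a function, I would exploit that $\cA$, being locally coherent, is in particular Grothendieck and hence admits an injective cogenerator $E$ --- for instance one may take $E$ to be the injective envelope of $\bigoplus_V U/V$, where $U$ is a generator of $\cA$ and $V$ ranges over a set of representatives for the isomorphism classes of subobjects of $U$. A short check confirms that $E$ cogenerates: given a non-zero object $X$ in $\cA$, choose a non-zero morphism $f\colon U\to X$; then the image $\im f\cong U/\Ker f$ appears (up to isomorphism) as a summand of $\bigoplus_V U/V$, hence embeds into $E$, and the injectivity of $E$ allows us to extend this embedding along the monomorphism $\im f\hookrightarrow X$ to a morphism $X\to E$, which is necessarily non-zero. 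The hypothesis of the lemma now guarantees that $E$ has finite endolength, so formula~\eqref{eq:addFuncFromObj} produces an integral additive function $\widetilde{\rho}_E$ on $\fp\cA$ via $\widetilde{\rho}_E(W)\coloneqq\length_{\End_\cA(E)}\Hom_\cA(W,E)$.

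It remains to observe that $\Ker\widetilde{\rho}_E=0$: for any non-zero $W$ in $\fp\cA$, the cogenerator property of $E$ forces $\Hom_\cA(W,E)\neq 0$ and hence $\widetilde{\rho}_E(W)\geq 1$. Invoking Proposition~\ref{prop:LocAdditiveFunctions} then finishes the argument. The only delicate ingredient here is the existence of an injective cogenerator of $\cA$, which is standard for Grothendieck categories; finite endolength of that cogenerator is immediate from the hypothesis, and exactness of $\Hom_\cA(-,E)$ on $\fp\cA$ (needed for additivity of $\widetilde{\rho}_E$) is precisely what the injectivity of $E$ provides.
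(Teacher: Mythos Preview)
Your argument is correct and follows essentially the same strategy as the paper: build a single injective object whose associated additive function on $\fp\cA$ has trivial kernel, then deduce that $\fp\cA$ is a length category. The only differences are cosmetic: the paper constructs its injective explicitly as $H=\prod_{i}Q_i$ with $Q_i$ the injective envelope of a representative of each isoclass in $\fp\cA$, and finishes via Corollary~\ref{cor:charac_loc_finite_abelian} (after passing to a basic function), whereas you invoke the abstract existence of an injective cogenerator in a Grothendieck category and apply Proposition~\ref{prop:LocAdditiveFunctions} directly --- a marginally cleaner endgame, since no ``throwing out repeats'' is needed.
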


\begin{proof}
Let $\{M_i\}_{i\in I}$ be a set of representatives of isoclasses of objects in $\fp\ca$ and let $\{Q_i\}_{i\in I}$ be the set of their injective envelopes in $\ca$. The object $H\coloneqq\prod_{i\in I}Q_i$ is injective and, thus, has finite endolength by our assumption. The kernel $\Ker \widetilde{\rho}_H$ of the additive function $\widetilde{\rho}_H$ on $\fp\cA$ is trivial. Throwing out repeating summands from the decomposition of $\widetilde{\rho}_H$ yields a basic additive function with trivial kernel. By Corollary \ref{cor:charac_loc_finite_abelian}, $\cA$ is locally finite.
\end{proof}

As we have just seen, locally finite categories can be identified among locally coherent categories by the existence of a special integral additive function (see Corollary \ref{cor:charac_loc_finite_abelian}) or by the fact that all of their injective objects have finite endolength (see Lemmas \ref{lem:injFinEndolength} and \ref{lem:convInjFinEndolength}). According to results in \cite{Krause1998}, locally finite categories are also characterised by the fact that their Ziegler spectrum is discrete and satisfies the isolation property. For a locally coherent category $\ca$, we say that a closed subset $C$ of $\Zg \cA$ satisfies the \emphbf{isolation property} if for every  point $[Q] \in C$ such that $\{[Q]\}$ is open in its closure $\overline{\{[Q]\}}$, the Serre localisation $(\fp \cA)/\Ker(\Hom_\ca (-,Q)|_{\fp\cA})$ has a simple object. 
Here, as usual, the set $\overline{\{[Q]\}}$ is endowed with the subspace topology inherited from $\Zg \cA$. 
The isolation property is a useful hypothesis that is observed in many cases. For example, it always holds for $\Zg\cA$ if the Krull--Gabriel dimension of $\fp\cA$ is defined or if $\cA$ has no superdecomposable injectives -- this follows from \cite[Corollary 12.5, Theorem 11.2]{Krause1998}. Note that in \cite{Krause1998} the isolation property is defined in a slightly different context. The precise relation to the version we use will be explained in Appendix \ref{Appendix A}.
The following theorem encapsulates the takeaway message of this section. Its proof is also given in Appendix \ref{Appendix A}.

\begin{theorem}
	\label{thm:lastPreliminaries}
		Let $\cA$ be a locally coherent category. The assignments $[H]\mapsto \widetilde{\rho}_{H}$,  $\widetilde{\rho}\mapsto \overrightarrow{\Ker\widetilde{\rho}}$, $\cx \mapsto \cx \cap \fp \cA$ and $\cS \mapsto \Zg \cA \setminus\co(\cS)$ induce a bijective correspondence between, respectively:
		\begin{enumerate}[(a)]
			\item isoclasses of basic injective objects in $\cA$ of finite endolength;
			\item basic additive functions on $\fp\cA$;
			\item hereditary torsion subcategories of finite type $\cx$ of $\cA$ such that $\cA/\cx$ is a locally finite category;
			\item Serre subcategories $\cS$ of $\fp \cA$ such that $(\fp\cA)/\cS$ is a length category;
			\item closed discrete subsets of $\Zg \cA$ satisfying the isolation property.
		\end{enumerate}
\end{theorem}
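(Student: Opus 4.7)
The plan is to chain the equivalences by using the Serre localisation \eqref{eq:loc} at $\overrightarrow{\cS}$ and the kernel $\Ker \widetilde{\rho}$ of an additive function as the bridge between the categorical side (c)--(e) and the functional side (a)--(b). The previously established Theorems \ref{thm:IrrAdditiveFunctions<->InjFinEndolength} and \ref{thm:ZieglerSpecLocallyCohCat} together with Proposition \ref{prop:LocAdditiveFunctions}, Corollary \ref{cor:charac_loc_finite_abelian} and the structural properties (1)--(4) in Section \ref{subsection:fundamental_corresp} will do most of the work; the only genuinely new content is identifying each class internally and matching the bijections.

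I would first treat (c) $\leftrightarrow$ (d) $\leftrightarrow$ (e). Theorem \ref{thm:ZieglerSpecLocallyCohCat} provides the bijection $\cx \leftrightarrow \cS \leftrightarrow \co(\cS)$ between all hereditary torsion subcategories of finite type, all Serre subcategories of $\fp \cA$, and all open sets of $\Zg \cA$; restricting to complements of open sets gives the closed sets. Imposing the three additional conditions reduces to the single requirement that $\cA/\overrightarrow{\cS}$ be locally finite. By the characterisation of locally finite categories from \cite{Krause1998} recalled before the theorem (and revisited in Appendix \ref{Appendix A}), this is equivalent to $\Zg(\cA/\overrightarrow{\cS})$ being discrete and satisfying the isolation property. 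Using the homeomorphism $\Zg(\cA/\overrightarrow{\cS}) \cong \Zg \cA \setminus \co(\cS)$ of point (3), this condition translates into (e); and since $(\fp \cA)/\cS = \fp(\cA/\overrightarrow{\cS})$ by point (2), it also translates into (d).

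For (b) $\leftrightarrow$ (d), I would send $\widetilde{\rho}\mapsto\Ker \widetilde{\rho}$ in one direction: this is Serre in $\fp\cA$ and, by Proposition \ref{prop:LocAdditiveFunctions}, $(\fp\cA)/\Ker \widetilde{\rho}$ is a length category. In the other direction, given $\cS$ as in (d), the composition length $\widetilde{\ell}$ on the length category $(\fp\cA)/\cS$ is a basic additive function with trivial kernel by Corollary \ref{cor:charac_loc_finite_abelian}. Pulling it back along the exact, essentially surjective quotient functor $\Gamma\colon \fp \cA \to (\fp\cA)/\cS$ via Lemma \ref{lem:AddFuncPrecomposedFunctor} yields an additive function on $\fp\cA$ with kernel precisely $\cS$. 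Basicness of this pullback is secured by Lemma \ref{lem:AddFuncPrecomposedFunctor}: any non-trivial decomposition of a pulled-back summand would descend to a non-trivial decomposition of an irreducible of $\widetilde{\ell}$, which is impossible; and distinct irreducibles stay distinct after pullback since $\Gamma$ is essentially surjective. The two constructions are mutually inverse by uniqueness of the basic decomposition granted by Theorem \ref{thm:IrrAdditiveFunctions<->InjFinEndolength}.

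Finally, (a) $\leftrightarrow$ (b) upgrades Theorem \ref{thm:IrrAdditiveFunctions<->InjFinEndolength} from the irreducible to the basic level. Given a basic $\widetilde{\rho} = \sum_i \widetilde{\rho}_{H_i}$ with the $H_i$ pairwise non-isomorphic indecomposable injectives of finite endolength, I would work inside the locally finite quotient $\cA/\overrightarrow{\Ker \widetilde{\rho}}$, where, via the homeomorphism in point (3), the $H_i$ correspond to the complete list $\overline{H_i}$ of indecomposable injectives. In a locally finite category, injectives are closed under coproducts and automatically have finite endolength by Lemma \ref{lem:injFinEndolength}, so $\overline{H}\coloneqq \coprod_i \overline{H_i}$ is a basic injective of finite endolength in the quotient. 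Applying the fully faithful functor $R$, which preserves injectivity, indecomposability, non-isomorphism and finite endolength by points (3) and (4), produces $H = R(\overline{H})$ in $\cA$, and additivity delivers $\widetilde{\rho}_H = \sum_i \widetilde{\rho}_{H_i} = \widetilde{\rho}$; the reverse direction is immediate from Krull--Remak--Schmidt--Azumaya applied to a basic $H$ together with Theorem \ref{thm:IrrAdditiveFunctions<->InjFinEndolength}. The main obstacle I expect is exactly this last step: in a general locally coherent category, coproducts of injectives need not be injective and coproducts of finite endolength objects need not have finite endolength, so carrying the construction out in $\cA$ directly fails. Routing through the locally finite Serre quotient, where both closures hold automatically, and then transporting back via $R$ is essential.
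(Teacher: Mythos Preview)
Your strategy matches the paper's almost exactly: link (c)--(d)--(e) via Theorem~\ref{thm:ZieglerSpecLocallyCohCat} plus the locally-finite characterisation, link (b) to (d) via $\Ker\widetilde{\rho}$ and the pulled-back composition length, and upgrade (a)$\leftrightarrow$(b) from irreducibles to basics by working inside the locally finite quotient and transporting back along $R$. Two points, however, are not quite closed.

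First, your appeal to Lemma~\ref{lem:AddFuncPrecomposedFunctor} for the irreducibility of the pulled-back summands goes in the wrong direction: that lemma says a decomposition of $\widetilde{\rho}$ on the target induces one of $\widetilde{\rho}^\Gamma$ on the source, whereas you need the converse. The fix is a descent argument you do not state: if $\widetilde{\rho}_i^\Gamma=\sigma+\tau$ with $\sigma,\tau$ non-zero integral additive functions on $\fp\cA$, then both are bounded by $\widetilde{\rho}_i^\Gamma$ and hence vanish on $\cS$, so they factor through $K_0((\fp\cA)/\cS)$ and give a non-trivial decomposition of $\widetilde{\rho}_i$ downstairs. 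The paper handles this implicitly by instead proving injectivity of (b)$\to$(d) via Proposition~\ref{prop:IrrAdditiveFunctions<->InjFinEndolength2}(1), reading off the irreducible summands from the complement in $\Zg\cA$.

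Second, the passage from ``$\Zg(\cA/\overrightarrow{\cS})$ discrete with the isolation property'' to ``$\Zg\cA\setminus\co(\cS)$ discrete with the isolation property'' is not a purely topological transfer along the homeomorphism. Discreteness is, but the isolation property is phrased in terms of the quotient $(\fp\cA)/\Ker(\Hom_\cA(-,Q)|_{\fp\cA})$ of the \emph{ambient} $\fp\cA$, not of $\fp(\cA/\overrightarrow{\cS})$. One must check that for $Q=R(\overline{Q})$ these two Serre quotients agree, which the paper isolates as a separate lemma (Lemma~\ref{lem:IsoClosedIsoQuotient}) and proves by a three-fold factorisation of localisation functors. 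Your sketch needs this ingredient.
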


The correspondence between (a) and (b) restricts to the bijection in Theorem \ref{thm:IrrAdditiveFunctions<->InjFinEndolength}. From the proof of Theorem \ref{thm:lastPreliminaries} it is apparent that this correspondence respects decompositions. Each basic object $H$ admits, by definition, a decomposition $H=\coprod_{i\in I}H_i$ into a coproduct  of  pairwise non-isomorphic indecomposable objects $H_i$ with local endomorphism rings. If $H$ is injective, so are the $H_i$. If, moreover, $H$ has finite endolength as in (a), then the objects $H_i$ have finite endolength, as explained in Subsection \ref{subsection lcc}. We then have $\widetilde{\rho}_H=\sum_{i\in I}\widetilde{\rho}_{H_i}$. The correspondence between (c), (d) and (e) is a restricted version of the bijections in Theorem \ref{thm:ZieglerSpecLocallyCohCat}. 

We conclude this section with further observations that will be used later on. These remarks can be deduced from Theorem \ref{thm:lastPreliminaries} and from the material in Appendix \ref{Appendix A}.

\begin{remark}
\label{pullback}
The inverse of the assignment $\widetilde{\rho} \mapsto \Ker \widetilde{\rho}$ from (b) to (d) maps a Serre subcategory $\cS$ of $\fp\cA$  to $\widetilde{\ell}^E$, where $E:\fp\cA\to (\fp\cA)/\cS$ denotes the corresponding localisation functor and  $\widetilde{\ell}$ denotes the composition length on $(\fp\cA)/\cS$ (see Lemma \ref{lem:AddFuncPrecomposedFunctor} and the proof of Proposition \ref{prop:IrrAdditiveFunctions<->InjFinEndolength2}).
\end{remark}

\begin{remark}\label{IsolatedRemark}
Going from (a) to (e) in Theorem \ref{thm:lastPreliminaries} corresponds precisely to taking the indecomposable summands $\{[H_i]\}_{i\in I}$ of $H=\coprod_{i\in I}H_i$ (see part (1) of Proposition \ref{prop:IrrAdditiveFunctions<->InjFinEndolength2}). In fact, the sets appearing in (e) could be equivalently described as subsets of $\Zg \cA$ such that the coproduct of representatives of all their elements has finite endolength. Another equivalent characterisation of the  sets appearing in (e) is the following:
\begin{enumerate}
    \item[(e')] closed discrete subsets of $\Zg\cA$ such that the coproduct of (representatives of) all their elements is $\varSigma$-pure-injective.
\end{enumerate}
Since objects of finite endolength are $\varSigma$-pure-injective, it is clear that all the sets appearing in (e) are in (e'). The converse follows from Lemmas \ref{lem:IsoClosedIsoQuotient} and \ref{lem:IsolationLocFin} and the properties of localisation sequences. 
\end{remark}

\section{Rank functions on triangulated categories}
\label{sec:RankFunctions}

In this section, we study rank functions on  triangulated categories via additive functions on the corresponding module categories.  We show that rank functions on $\cC$ can be identified with translation-invariant additive functions on $\mod \cC$.

\subsection{The module category of a triangulated category}\label{subsection RF1}
Suppose henceforth that $\cC$ is a skeletally small triangulated category with translation functor $\Sigma: \cC \to \cC$. In that case, the category $\Mod \cC$ is locally coherent and $\mod \cC$ is a Frobenius abelian category with the direct factors of representable functors as projective-injective objects. Hence, a functor $F$ in $\Mod \cC$ is finitely presented if and only if it admits a copresentation  by representable functors or, equivalently, if $F$ is naturally isomorphic to $\Imm \Hom_\cC(-,f)$ for some morphism $f$ in $\cC$. 

Recall that any triangulated functor $\Phi: \cC \to \cd$ between skeletally small triangulated categories gives rise to an exact \emphbf{restriction functor} $\Phi_*:\Mod \cd \to \Mod \cc$ via precomposition with $\Phi$. The functor $\Phi_*$ has an exact left adjoint $\Phi^*:\Mod \cc \to \Mod \cd$, the \emphbf{induction functor}, which is characterised as the unique colimit preserving functor from $\Mod\cc$ to $\Mod \cd$ that maps $\Hom_\cC(-,f)$  to $\Hom_\cd (-,\Phi(f))$ (see \cite[\S2.2, Lemma 2.2]{Krause2000} or \cite[\S1, Lemmas 3.6, 3.7]{Krause2005}). In particular, $\Phi^*$ preserves representable functors and finitely presented objects, so it restricts to a functor $\Phi^*|_{\mod \cc}:\mod \cc \to \mod \cd$. In fact, since $\Phi^*$ is exact, we get $\Phi^*(\Imm \Hom_\cC(-,f))\cong \Imm \Phi^*(\Hom_\cC(-,f))\cong \Imm \Hom_\cd(-,\Phi f)$. The induction $\Sigma^*: \Mod \cC \to \Mod \cC$ of the translation functor  is an equivalence,  mapping $\Hom_\cC(-,f)$  to $\Hom_\cC (-,\Sigma f)$. It is not hard to see that, up to a natural isomorphism, $\Sigma^*$ is given by the restriction of $\Sigma^{-1}$.

A functor from a triangulated category to an abelian category is called \emphbf{cohomological} if it maps triangles to exact sequences. Unless mentioned otherwise, all the cohomological functors we consider are assumed to be objects in $\Mod \cC$. We use the following well-known characterisation of cohomological functors.

\begin{lemma}[{\cite[Lemma 2.7]{Krause2000}}]
\label{lem:cohomologicalFpinjectiveFlat}
	Let $\cC$ be a skeletally small triangulated category. The following statements are equivalent for an object $H$ in $\Mod \cC$:
	\begin{enumerate}[(a)]
		\item $H$ is a cohomological functor;
		\item $H$ is fp-injective;
		\item $H$ is a filtered colimit of representable functors.
	\end{enumerate}
\end{lemma}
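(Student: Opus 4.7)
The plan is to prove the three equivalences cyclically, via (a) $\Rightarrow$ (c) $\Rightarrow$ (b) $\Rightarrow$ (a), each step exploiting a different ingredient: a filteredness argument for the category of elements, the stability of fp-injectivity under filtered colimits, and a diagram chase using the Frobenius property of $\mod\cC$ together with Yoneda.

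For (a) $\Rightarrow$ (c), I would use the standard presentation of any $H\in\Mod\cC$ as a colimit of representables indexed by its category of elements $\int H$, whose objects are pairs $(X,x)$ with $x\in H(X)$ and whose morphisms $(X,x)\to(Y,y)$ are arrows $f\colon X\to Y$ in $\cC$ with $H(f)(y)=x$. It then suffices to show that $\int H$ is filtered when $H$ is cohomological. Any two objects $(X,x),(Y,y)$ admit a common target $(X\oplus Y,(x,y))$ via the biproduct of $\cC$. Given two parallel arrows $f,g\colon(X,x)\to(Y,y)$, I would complete the difference to a triangle $X\xrightarrow{f-g}Y\xrightarrow{h}Z\to\Sigma X$ in $\cC$; the cohomological property then gives $y\in\Ker H(f-g)=\Imm H(h)$, so $y=H(h)(z)$ for some $z\in H(Z)$, and the morphism $h\colon(Y,y)\to(Z,z)$ coequalises $f$ and $g$ in $\int H$.

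For (c) $\Rightarrow$ (b), I would note that each representable $\Hom_\cC(-,X)$ is projective-injective in the Frobenius abelian category $\mod\cC$, and is therefore fp-injective in $\Mod\cC$. For any $W\in\mod\cC$ a presentation by representables lets one compute $\Ext^1_{\Mod\cC}(W,-)$ as the cokernel of a map between evaluation functors, which commutes with filtered colimits; thus fp-injectivity is preserved under filtered colimits, so (c) implies (b). For (b) $\Rightarrow$ (a), given a triangle $X\xrightarrow{f}Y\xrightarrow{g}Z\to\Sigma X$ I would split the exact sequence $\Hom_\cC(-,X)\to\Hom_\cC(-,Y)\to\Hom_\cC(-,Z)$ into the short exact sequences
\[
0\to\Imm\Hom_\cC(-,f)\to\Hom_\cC(-,Y)\to\Imm\Hom_\cC(-,g)\to 0
\]
and
\[
0\to\Imm\Hom_\cC(-,g)\to\Hom_\cC(-,Z)\to\Hom_\cC(-,Z)/\Imm\Hom_\cC(-,g)\to 0,
\]
all of whose terms lie in $\mod\cC$. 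Applying $\Hom_{\Mod\cC}(-,H)$, using fp-injectivity of $H$ to kill the resulting $\Ext^1_{\Mod\cC}$-terms, and identifying the relevant Hom-groups via the Yoneda lemma with values of $H$, one reads off exactness of $H(Z)\to H(Y)\to H(X)$ at $H(Y)$.

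I expect the main obstacle to lie in (a) $\Rightarrow$ (c): one has to combine the additive structure of $\cC$ and the cone construction to verify both filteredness axioms for $\int H$, and only the coequaliser axiom genuinely uses the cohomological hypothesis. The other two implications are essentially formal consequences of the locally coherent structure of $\Mod\cC$ and the Frobenius property of $\mod\cC$.
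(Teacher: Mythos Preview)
Your proposal is correct and follows the standard route; the paper itself does not supply a proof but simply cites \cite[Lemma 2.7]{Krause2000}, and your cyclic argument (a) $\Rightarrow$ (c) $\Rightarrow$ (b) $\Rightarrow$ (a) is essentially how that result is obtained. The key ingredients you identify---filteredness of the category of elements via biproducts and cones, closure of fp-injectivity under filtered colimits, and the Frobenius property of $\mod\cC$ combined with Yoneda---are exactly the right ones, and your assessment that only the coequaliser axiom in (a) $\Rightarrow$ (c) genuinely uses the cohomological hypothesis is accurate.
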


We say that an object $H$ in $\Mod \cC$ is \emphbf{endofinite} if $H(X)$ has finite length as a module over $\End_{\Mod \cC}(H)$ for every $X$ in $\cC$. By Lemma \ref{lem:finiteendolengthforfunctors}, the endofinite objects in $\Mod \cC$ are exactly the objects of finite endolength. Note that the term endofinite was used differently in \cite{Krause2000} (see Definition 1.2 loc.~cit.). We use this term in a broader sense. Since fp-injective objects of finite endolength are injective by Subsection \ref{subsection lcc}, we have the following corollary.

\begin{corollary}
\label{cor:CohEndofin=InjFinEndoLength}
	Finite endolength injectives in $\Mod \cC$ are precisely the endofinite cohomological functors.
\end{corollary}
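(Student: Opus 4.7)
The plan is to deduce the corollary by combining the two explicitly cited results, Lemma \ref{lem:finiteendolengthforfunctors} (endofinite = finite endolength in $\Mod\cC$) and Lemma \ref{lem:cohomologicalFpinjectiveFlat} (cohomological = fp-injective), together with the general fact from Subsection \ref{subsection lcc} that in a locally coherent category an object is injective if and only if it is both fp-injective and pure-injective. Since $\Mod\cC$ is locally coherent for any skeletally small triangulated $\cC$, this characterisation is available.

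For the forward direction, I would start with a finite endolength injective $H$ in $\Mod\cC$. Lemma \ref{lem:finiteendolengthforfunctors} immediately upgrades ``finite endolength'' to ``endofinite'' in the sense used here. Since injectives are in particular fp-injective, Lemma \ref{lem:cohomologicalFpinjectiveFlat} then gives that $H$ is cohomological, producing the desired conclusion.

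For the converse, suppose $H$ is an endofinite cohomological functor. Lemma \ref{lem:finiteendolengthforfunctors} again translates endofiniteness into finite endolength, and Lemma \ref{lem:cohomologicalFpinjectiveFlat} tells us that $H$ is fp-injective. The remaining point is injectivity, and this is where the general preliminary input enters: every finite endolength object is $\varSigma$-pure-injective (and hence pure-injective) by the facts recalled in Subsection \ref{subsection lcc}, so $H$ is fp-injective and pure-injective, therefore injective in the locally coherent category $\Mod\cC$.

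There is no genuine obstacle here, as the statement is a direct combination of the preceding two lemmas with a standard fact about locally coherent categories; the only thing to be careful about is to invoke pure-injectivity of $H$ (which comes from $\varSigma$-pure-injectivity of finite endolength objects) in order to conclude injectivity from fp-injectivity, rather than confusing the two notions.
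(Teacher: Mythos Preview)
Your proposal is correct and matches the paper's own argument: the paper simply invokes the fact from Subsection~\ref{subsection lcc} that fp-injective objects of finite endolength are injective (which is exactly the combination of $\varSigma$-pure-injectivity of finite endolength objects with the characterisation of injectives as fp-injective pure-injectives), together with Lemmas~\ref{lem:finiteendolengthforfunctors} and~\ref{lem:cohomologicalFpinjectiveFlat}. You have written out the same deduction in slightly more detail.
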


\subsection{Rank functions}
\label{subsec:RankFun<->SigmaInvAddFun}
We shift our attention to rank functions on triangulated categories, as introduced by Chuang and Lazarev in \cite{Chuang2021}.

\begin{definition}[{\cite[Definition 2.3]{Chuang2021}}]
	\label{def:rankfunction}
	 A \emphbf{rank function} $\rho$ on $\cC$ assigns to each morphism $f$ in $\cC$ an element $\rho(f)\in \R$ such that $\rho$ satisfies the following axioms:
	\begin{enumerate}[(M1)]
	\item \emphbf{Non-negativity:} $\rho(f) \geq 0$ for every morphism $f$ in $\cC$;
	\item \emphbf{Additivity:} $\rho(f \oplus g) = \rho(f) + \rho(g)$ for any two morphisms $f$ and $g$;
	\item \emphbf{Rank-nullity:} $\rho(f) + \rho(g)=\rho(\id_Y)$ for any triangle
	\[
	\begin{tikzcd}
	X \ar[r, "f"] & Y \ar[r, "g"] & Z \ar[r] & \Sigma X;
	\end{tikzcd}
	\]
		\item \emphbf{$\Sigma$-invariance:} $\rho(\Sigma f) = \rho(f)$ for any morphism $f$ in $\cC$.
	\end{enumerate}
\end{definition}

Alternatively, rank functions can be defined on objects of $\cC$.

\begin{definition}[{\cite[Definition 2.1]{Chuang2021}}]
	\label{def:rankfunctionobj}
	 A \emphbf{rank function on objects} $\rho_{ob}$ of $\cC$ assigns to each object $X$ in $\cC$ an element $\rho_{ob}(X)\in \R$ such that $\rho_{ob}$ satisfies the axioms:
\begin{enumerate}
	\item[(O1)] \emphbf{Non-negativity:} $\rho_{ob}(X)\geq 0$ for every object $X$ in $\cC$;
	\item[(O2)] \emphbf{Additivity:}  $\rho_{ob}(X\oplus Y)=\rho_{ob}(X)+\rho_{ob}(Y)$ for $X$ and $Y$ in $\cC$;
	\item[(O3)] \emphbf{Triangle inequality:} $\rho_{ob}(Y) \leq \rho_{ob}(X) + \rho_{ob}(Z)$ for any triangle
	\[
	    \begin{tikzcd}
        	X \ar[r] & Y \ar[r] & Z \ar[r] & \Sigma X;
	    \end{tikzcd}
	\]
	\item[(O4)] \emphbf{$\Sigma$-invariance:} $\rho_{ob}(\Sigma X) = \rho_{ob} (X)$ for every $X$ in $\cC$.
\end{enumerate}
\end{definition}
\noindent By \cite[Proposition 2.4]{Chuang2021}, both definitions above are equivalent by setting $\rho_{ob}(X)\coloneqq\rho(\id_X)$ and
\[\rho(f: X\to Y)\coloneqq\dfrac{\rho_{ob}(X)+\rho_{ob}(Y)-\rho_{ob}(\Cone f)}{2}.\]

\begin{remark}\label{remark functions}
Note that it is implicit in axiom (M2) that rank functions are constant on isomomorphism classes of morphisms, since coproducts are defined only up to isomorphism. Two morphisms $f$ and $f'$ in some category are isomorphic ($f \sim f'$) if there exist isomorphisms $\alpha$ and $\beta$ such that $f' \circ \alpha=\beta \circ f$.  For any assignment $\rho$ on $\Mor \cC$ satisfying (M2), $f \sim f'$ implies that both $f$ and $f'$ represent $f\oplus 0$, so the values of $\rho$ on $f$ and $f'$ coincide. 
 The invariance of $\rho$ on isomorphism classes of morphisms independently follows from (M3). It is enough to check that (M3) implies that $\rho(f)=\rho(f\circ \alpha)=\rho(\beta\circ f)$ for isomorphisms $\alpha$ and $\beta$. A triangle starting with $f$ 
\[
\begin{tikzcd}[ampersand replacement=\&]
	X \ar[r, "f"] \& Y \ar[r, "g"] \& Z \ar[r, "h"] \& \Sigma X
\end{tikzcd}
\]
is isomorphic to the sequence of morphisms
\[
\begin{tikzcd}[column sep = large,ampersand replacement=\&]
	X' \ar[r, "f \circ \alpha"] \& Y \ar[r, "g"] \& Z \ar[r, "\Sigma (\alpha^{-1}) \circ h"] \& \Sigma X'
\end{tikzcd}
\]
 which therefore must also be a triangle in $\cC$. The application of (M3) to both triangles yields $\rho(f)=\rho(f \circ \alpha)$. The proof of the equality $\rho(f)=\rho(\beta \circ f)$ is analogous. Since  $\cC$ is skeletally small,  $\Mor \cC/ \sim$ forms a set.
By the above, $\rho$ can be regarded as an actual function on this set.
Analogously, any assignment satisfying (O2) is constant on isomorphism classes of objects in  $\cC$, and so can be regarded as an actual function on a set.
\end{remark}

\begin{example}
\label{ex:fin-dim_alg}
Let $A$ be a finite-dimensional algebra over a field $k$. Denote by $\operatorname{D^b}(A)$ the bounded derived category of finite-dimensional $A$-modules and by  $\operatorname{K^b(proj}A\hspace{-2pt}\operatorname{)}\simeq \cper (A)$ the homotopy category of bounded complexes of finite-dimensional projective $A$-modules, which naturally embeds into $\operatorname{D^b}(A)$. For any object $X$ in $\operatorname{K^b(proj}A\hspace{-2pt}\operatorname{)}$ we can consider the rank function on the category $\operatorname{D^b}(A)$ defined on objects as follows:
\[
	\vartheta_X(Y)\coloneqq\sum_{i\in\Z}\dim_k\Hom_{\operatorname{D^b}(A)}(\Sigma^iX,Y).
\]
Indeed, $\vartheta_X(Y)$ is well-defined, since $\dim_k\Hom_{\operatorname{D^b}(A)}(\Sigma^iX,Y)\neq 0$ only for finitely many $i\in \Z$. Axioms (O1), (O2) and (O4) are clear from the construction. Applying $\Hom_{\operatorname{D^b}(A)}(\Sigma^iX,-)$ to a triangle
\[
    \begin{tikzcd}[ampersand replacement=\&]
	    Y \ar[r] \& Z \ar[r] \& W \ar[r] \& \Sigma Y
    \end{tikzcd}
\]
yields a long exact sequence 
\begin{equation}\label{LongExSeqForEx}
    \begin{tikzcd}[ampersand replacement=\&]
	    \cdots \ar[r] \& \Hom_{\operatorname{D^b}(A)}(\Sigma^iX,Y) \ar[r] \& \Hom_{\operatorname{D^b}(A)}(\Sigma^iX,Z) \ar[r] \& \Hom_{\operatorname{D^b}(A)}(\Sigma^iX,W) \ar[r] \& \cdots
	\end{tikzcd}
\end{equation}
which implies that 
\[ \dim_k\Hom_{\operatorname{D^b}(A)}(\Sigma^iX,Z)\leq \dim_k\Hom_{\operatorname{D^b}(A)}(\Sigma^iX,Y)+\dim_k\Hom_{\operatorname{D^b}(A)}(\Sigma^iX,W)\]
for any $i\in\Z$ and thus $\vartheta_X(Z)\leq \vartheta_X(Y)+\vartheta_X(W).$

In particular, in case $X=A$, we get that $\vartheta_A(Y)$ is the dimension of the total cohomology of $Y$.
Dually, for any object $Z$ in $\operatorname{D^b}(A)$ we can consider the following rank function on the category  $\operatorname{K^b(proj}A\hspace{-2pt}\operatorname{)}$:
 \[
 	\vartheta^Z(Y)\coloneqq\sum_{i\in\Z}\dim_k\Hom_{\operatorname{D^b}(A)}(Y,\Sigma^iZ).
 \]
 Note that the same formula defines a rank function on $\operatorname{D^b}(A)$, if the object $Z$ is in the homotopy category $\operatorname{K^b(inj}A\hspace{-2pt}\operatorname{)}$ of bounded complexes of finite-dimensional injective $A$-modules.
  
 Let us denote by $\bbD\coloneqq\Hom_k(-,k)$ the standard duality. The Nakayama functor 
 \[\nu\coloneqq-\otimes_A^L\bbD A: \operatorname{K^b(proj}A\hspace{-2pt}\operatorname{)} \to \operatorname{K^b(inj}A\hspace{-2pt}\operatorname{)}\]
 satisfies the property
\begin{equation}\label{Serre}
    \bbD \Hom_{\operatorname{D^b}(A)}(X,Y)\cong \Hom_{\operatorname{D^b}(A)}(Y,\nu X) 
\end{equation}
for any objects $X$ in $\operatorname{K^b(proj}A\hspace{-2pt}\operatorname{)}$ and $Y$ in $\operatorname{D^b}(A)$.
One can easily see that the following two rank functions on $\operatorname{D^b}(A)$ coincide:
\[ \vartheta_X=\vartheta^{\nu X} \text{ for } X \text{ in } \operatorname{K^b(proj}A\hspace{-2pt}\operatorname{)}.\]
Observe that in the case  when the global dimension of $A$ is finite, the categories $\operatorname{D^b}(A)$, $\operatorname{K^b(proj}A\hspace{-2pt}\operatorname{)}$ and $\operatorname{K^b(inj}A\hspace{-2pt}\operatorname{)}$ coincide, and so it follows from \eqref{Serre} that $\nu$ becomes a Serre functor on $\operatorname{D^b}(A)$.
\end{example}

\begin{example}
Similarly to the previous example, we can take $A$ to be a dg algebra over a field $k$. Then for an object $X$ in the perfect derived category $\cper (A)$ one can define a rank function $\vartheta_X$ on the subcategory $\D_{\operatorname{fd}}(A)$ of the derived category of $A$,
consisting of dg $A$-modules whose total cohomology is finite-dimensional over $k$. As in Example \ref{ex:fin-dim_alg}, one can also consider a rank function $\vartheta^Z$ on $\cper (A)$ for $Z$ in $\D_{\operatorname{fd}}(A)$.
\end{example}

\begin{example}
Let $\mathbb{X}$ be a projective scheme over a field $k$. Denote by $\cper (\mathbb{X})$ the derived category of perfect complexes over $\mathbb{X}$ and by $\operatorname{D^b}(\operatorname{coh}\mathbb{X})$ the bounded derived category of coherent sheaves over $\mathbb{X}$. Similarly to Example \ref{ex:fin-dim_alg}, for $X\in\cper (\mathbb{X})$ we can consider rank functions of the form $\vartheta_X$ on $\operatorname{D^b}(\operatorname{coh}\mathbb{X})$ and rank functions of the form $\vartheta^Z$ on $\cper (\mathbb{X})$ for $Z$ in $\operatorname{D^b}(\operatorname{coh}\mathbb{X})$. The fact that the corresponding functions are well-defined follows from \cite[Lemmas 7.46, 7.49]{Rouquier}.
\end{example}

\subsection{Rank functions via the module category}
\label{Subsecaddtorank}

An additive function $\widetilde{\rho}$ on $\mod \cC$ induces an assignment $\rho$ on the class $\Mor \cC$ of morphisms in $\cC$ by setting $\rho(f)\coloneqq \widetilde{\rho}(\Imm \Hom_{\cC}(-,f))$. The assignment $\rho$ satisfies the conditions (M1), (M2) and (M3).
Indeed, (M1) and (M2) are immediate. For (M3) consider a triangle in $\cC$ 
	\[
	\begin{tikzcd}
	X \ar[r, "f"] & Y \ar[r, "g"] & Z \ar[r] & \Sigma X
	\end{tikzcd}
	\]
	and the induced exact sequence in $\mod \cC$
	\[
	\begin{tikzcd}
	\Hom_\cC(-,X) \ar[r,"f^\dagger"] & \Hom_\cC(-,Y) \ar[r, "g^\dagger"] & \Hom_\cC(-,Z).
	\end{tikzcd}
	\]
    By assumption,
	$\widetilde{\rho}(\Hom_\cC(-,Y))=\widetilde{\rho}(\Imm f^\dagger)+\widetilde{\rho}(\Imm g^\dagger)$,
	which can be rewritten as $\rho(\id_Y)=\rho(f) + \rho(g)$.
	
Going in the opposite direction, given an assignment $\rho: \Mor \cC \to \R$ satisfying conditions (M1), (M2) and (M3), it may be possible to find morphisms $f$ and $f'$ in $\cC$ such that $\rho(f)\neq \rho (f')$ but $\Imm \Hom_{\cC}(-,f)\cong \Imm \Hom_{\cC}(-,f')$.  Hence, we may not be able to construct a well-defined additive function $\widetilde{\rho}$ from $\rho$, and the injective correspondence $\widetilde{\rho} \mapsto \rho$ may in fact not be surjective. Such pairs of morphisms $(f, f')$ do not exist if $\cC$ is Krull--Schmidt.

\begin{lemma}
    Let $\cC$ be Krull--Schmidt and $\rho: \Mor \cC \to \R$ be an assignment satisfying condition (M2). Let $X\xrightarrow{f} Y$ and $X'\xrightarrow{f'} Y'$ be morphisms in $\cC$ such that $\Imm \Hom_{\cC}(-,f)\cong \Imm \Hom_{\cC}(-,f')$. Then  $\rho(f)=\rho(f').$
\end{lemma}
\begin{proof}
    Denote by $\pi$  the projection $\Hom_{\cC}(-,X)\rightarrow \Imm \Hom_{\cC}(-,f)$ and by $\iota$ the inclusion $\Imm \Hom_{\cC}(-,f)\rightarrow \Hom_{\cC}(-,Y)$. The morphisms $\pi'$ and $\iota'$ are defined accordingly. Twisting $\pi'$ and $\iota'$ with an isomorphism we can identify $\Imm \Hom_{\cC}(-,f)$ and $\Imm \Hom_{\cC}(-,f')$. 
Since $\cC$ is Krull--Schmidt, we may apply \cite[Corollary 1.4]{KrauseSaorin} and this guarantees the existence of a decomposition $\Hom_{\cC}(-,Y)=\Hom_{\cC}(-,Y_1)\oplus \Hom_{\cC}(-,Y_2)$ such that $\iota=\begin{pmatrix}\iota_1 \\  0\end{pmatrix}$ and $\iota_1$ is left minimal. Analogously, there is a decomposition $\Hom_{\cC}(-,Y')=\Hom_{\cC}(-,Y'_1)\oplus \Hom_{\cC}(-,Y'_2)$ such that $\iota'=\begin{pmatrix}\iota'_1 \\  0\end{pmatrix}$ with $\iota'_1$ left minimal.

Since $\Hom_{\cC}(-,Y)$ and $\Hom_{\cC}(-,Y')$ are injective in $\mod \cC$, the identity $\Imm \Hom_{\cC}(-,f) = \Imm \Hom_{\cC}(-,f')$ gives a pair of morphisms between $\Hom_{\cC}(-,Y)$ and $\Hom_{\cC}(-,Y')$ which induces an isomorphism $\Hom_{\cC}(-,Y_1)\cong \Imm \Hom_{\cC}(-,Y'_1)$ by the minimality of $\iota_1$ and $\iota'_1$. Note that $\iota_1 \sim \iota'_1$ via this isomorphism (see Remark \ref{remark functions} for notation). Dually we get decompositions $\Hom_{\cC}(-,X)=\Hom_{\cC}(-,X_1)\oplus \Hom_{\cC}(-,X_2)$ with $\pi= \begin{pmatrix}\pi_1 &  0\end{pmatrix}$ and $\Hom_{\cC}(-,X')=\Hom_{\cC}(-,X'_1)\oplus \Hom_{\cC}(-,X'_2)$ with $\pi'=\begin{pmatrix}\pi'_1 &  0\end{pmatrix}$ together with an isomorphism $\Hom_{\cC}(-,X_1)\cong \Imm \Hom_{\cC}(-,X'_1)$ providing $\pi_1 \sim \pi'_1$. 

Finally, $\Hom_{\cC}(-,f)=\iota\circ\pi=\begin{pmatrix}\iota_1\circ\pi_1 & 0 \\ 0& 0\end{pmatrix}$ and $\Hom_{\cC}(-,f')=\iota'\circ\pi'=\begin{pmatrix}\iota'_1\circ\pi'_1 & 0 \\ 0& 0\end{pmatrix}$. By the Yoneda lemma, $\iota_1\circ\pi_1=\Hom_{\cC}(-,f_1)$ and $\iota'_1\circ\pi'_1=\Hom_{\cC}(-,f'_1)$ for some $X_1\xrightarrow{f_1}Y_1$ and $X'_1\xrightarrow{f'_1}Y'_1$. Since $\iota_1\circ\pi_1 \sim \iota'_1\circ\pi'_1$, we have $f_1 \sim f'_1$. By (M2), we get $\rho(f)=\rho(f_1)+\rho(0)=\rho(f_1)=\rho(f'_1)=\rho(f'_1)+\rho(0)=\rho(f').$
\end{proof}

Even if we obtain a well-defined function $\widetilde{\rho}$ via the assignment $\widetilde{\rho}(F)\coloneqq\rho(f)$ for $F\cong \Imm \Hom_\cC(-,f)$, the main difficulty is proving additivity of $\widetilde{\rho}$. The functions for which axioms (M1), (M2) and (M3) hold satisfy the following properties.
\begin{lemma}
	\label{lem:PropPreRankFunctions}
	Let $\rho: \Mor \cC \to \R$ be an assignment satisfying properties (M1), (M2) and (M3). Then the following statements hold:
	\begin{enumerate}
		\item $\rho(f)+ \rho(\Sigma f)=\rho(\id_Y)-\rho(\id_{\Cone f})+\rho(\id_{\Sigma X})$ for every $f: X \to Y$ in $\cC$;
		\item  $\rho(f)+ \rho(\Sigma f)=\rho(f')+ \rho(\Sigma f')$ for morphisms $f$ and $f'$ satisfying $\Imm \Hom_{\cC}(-,f)\cong \Imm \Hom_{\cC}(-,f')$.
	\end{enumerate}
\end{lemma}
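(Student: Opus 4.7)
Plan. For part (1), I would apply axiom (M3) three times — once to the triangle $X\xrightarrow{f}Y\xrightarrow{g}\Cone f\xrightarrow{h}\Sigma X$ and once to each of its first two rotations — to obtain
\[
\rho(f)+\rho(g)=\rho(\id_Y),\qquad \rho(g)+\rho(h)=\rho(\id_{\Cone f}),\qquad \rho(h)+\rho(-\Sigma f)=\rho(\id_{\Sigma X}).
\]
Adding the first and third and subtracting the second yields $\rho(f)+\rho(-\Sigma f)=\rho(\id_Y)-\rho(\id_{\Cone f})+\rho(\id_{\Sigma X})$. To finish (1), I would establish the auxiliary identity $\rho(-u)=\rho(u)$ for every morphism $u$. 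The key observation is that whenever $(u,g,h)$ is a triangle, so is $(-u,-g,h)$, since $(-\id,\id,-\id)$ defines an isomorphism of candidate triangles. Applying (M3) to the first rotations $(g,h,-\Sigma u)$ and $(-g,h,\Sigma u)$ then gives $\rho(g)+\rho(h)=\rho(\id_{\Cone u})=\rho(-g)+\rho(h)$, hence $\rho(g)=\rho(-g)$; combining with (M3) on the original triangles forces $\rho(u)=\rho(-u)$. Specialising to $u=\Sigma f$ concludes part (1).

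For part (2), by (1) the claim reduces to showing that
\[
\rho(\id_Y)-\rho(\id_{\Cone f})+\rho(\id_{\Sigma X})=\rho(\id_{Y'})-\rho(\id_{\Cone f'})+\rho(\id_{\Sigma X'})
\]
whenever $F\coloneqq \Imm \Hom_\cC(-,f)\cong \Imm \Hom_\cC(-,f')$. Applying $\Hom_\cC(-,-)$ to the triangle containing $f$ yields the 4-term exact sequence
\[
0\to F\to \Hom_\cC(-,Y)\to \Hom_\cC(-,\Cone f)\to \Hom_\cC(-,\Sigma X)\to \Sigma^*F\to 0
\]
in $\mod\cC$, with the analogous sequence for $f'$ sharing the same endpoints $F$ and $\Sigma^*F$. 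Since $\mod\cC$ is Frobenius with representables being projective–injective, I would split each 4-term sequence into three short exact sequences through intermediate quotient objects $L=\Hom_\cC(-,Y)/F$ and $M=\Hom_\cC(-,\Cone f)/L$ (and analogously $L',M'$), and apply Schanuel's lemma three times: first to the pair of injective copresentations of $F$, obtaining $L\oplus \Hom_\cC(-,Y')\cong L'\oplus \Hom_\cC(-,Y)$; next, after unifying the left-hand sides by adjoining trivial extensions, to the pair of extensions of this common object by $\Hom_\cC(-,\Cone f)\oplus \Hom_\cC(-,Y')$ and $\Hom_\cC(-,\Cone f')\oplus \Hom_\cC(-,Y)$; and finally to the pair of projective presentations of $\Sigma^*F$ via $\Hom_\cC(-,\Sigma X)$ and $\Hom_\cC(-,\Sigma X')$. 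Combining these gives an isomorphism of direct sums of representables modulo a common non-representable summand $M'$, which after cancellation in the Grothendieck group yields
\[
[\Hom_\cC(-,Y)]-[\Hom_\cC(-,\Cone f)]+[\Hom_\cC(-,\Sigma X)]=[\Hom_\cC(-,Y')]-[\Hom_\cC(-,\Cone f')]+[\Hom_\cC(-,\Sigma X')]
\]
in $K_0(\mod\cC)$.

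The main obstacle is lifting this $K_0(\mod\cC)$-identity to the corresponding equality of $\rho(\id_\cdot)$-values. By (M2), the assignment $W\mapsto \rho(\id_W)$ is additive on direct sums and extends to the free abelian group on isoclasses of objects of $\cC$. To handle the non-representable summand $M'$ appearing on both sides of the Schanuel isomorphism, I would track its contribution through its own projective presentation, which arises from a triangle of $\cC$; part (1) applied to that triangle then shows that $M'$ contributes identically on both sides, so the cancellation persists at the level of $\rho(\id_\cdot)$. Iterating this argument across each of the three Schanuel steps yields the desired equality and completes part (2).
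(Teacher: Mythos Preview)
Your argument for part (1) is correct and essentially the same as the paper's; you are in fact more careful than the paper about the sign appearing in the rotated triangle, and your verification that $\rho(-u)=\rho(u)$ is a nice touch (the paper tacitly uses this via the remark preceding the lemma that any $\rho$ satisfying (M2) is constant on the equivalence classes $f\sim f'$ given by $f'\alpha=\beta f$ for isomorphisms $\alpha,\beta$).

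For part (2) there is a genuine gap. Carrying out your three Schanuel steps correctly, one ends up with two short exact sequences
\[
0\to N_2\to \Hom_\cC(-,\Sigma X\oplus Z'\oplus Y)\to \Sigma^*F\to 0,\qquad
0\to N_2\to \Hom_\cC(-,\Sigma X'\oplus Z\oplus Y')\to \Sigma^*F\to 0
\]
with the \emph{same} outer terms and projective--injective middle terms. Schanuel applied once more only yields $N_2\oplus\Hom_\cC(-,\Sigma X'\oplus Z\oplus Y')\cong N_2\oplus\Hom_\cC(-,\Sigma X\oplus Z'\oplus Y)$, and cancelling the common non-representable summand $N_2$ (or $M'$, in your notation) is exactly the problem: $\mod\cC$ is not assumed Krull--Schmidt, so such cancellation is not automatic. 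Your proposed remedy --- to ``track the contribution of $M'$ through its own projective presentation and apply part (1)'' --- does not make sense as stated: at this stage $\rho$ is only defined on morphisms of $\cC$, not on objects of $\mod\cC$, so there is no ``contribution of $M'$'' to speak of, and part (1) only produces an identity among values $\rho(\id_W)$ for $W$ in $\cC$, not a mechanism for cancelling $M'$ from an isomorphism.

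The paper sidesteps this by a symmetrisation trick: using (M2) it rewrites
\[
\rho(\id_Y)-\rho(\id_Z)+\rho(\id_{\Sigma X})=\rho(\id_{Y\oplus Z'\oplus \Sigma X})-\rho(\id_{Z'})-\rho(\id_Z),
\]
and similarly for $f'$. The two right-hand sides visibly agree once one knows $Y\oplus Z'\oplus\Sigma X\cong Y'\oplus Z\oplus\Sigma X'$ in $\cC$, and the paper quotes this isomorphism from \cite[Lemma~A.1]{Krause2016}. Your Schanuel route is essentially an attempt to reprove that lemma; it is the right idea, but to complete it you must actually establish the isomorphism of objects in $\cC$ (equivalently, of representables) rather than merely the equality in $K_0(\mod\cC)$.
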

\begin{proof}
Let $f:X \to Y$ and $f':X' \to Y'$ be morphisms in $\cC$ such that $\Imm \Hom_{\cC}(-,f)\cong \Imm \Hom_{\cC}(-,f')$. Consider triangles
\[
\begin{tikzcd}
X \ar[r, "f"] & Y \ar[r, "g"] & Z \ar[r, "h"] & \Sigma X,
\end{tikzcd} \quad
\begin{tikzcd}
X' \ar[r, "f'"] & Y' \ar[r, "g'"] & Z' \ar[r, "h'"] & \Sigma X'.
\end{tikzcd}
\]
Using (M3) and (M2), we deduce that
\begin{align}
\label{eq:PropPreRankFunctions}
\rho(f)+\rho(\Sigma f)&=(\rho(f)+\rho(g))-(\rho(g)+\rho(h)) + (\rho(h)+\rho(\Sigma f))=\rho(\id_Y)-\rho(\id_Z)+\rho(\id_{\Sigma X})\\
&=\rho(\id_{Y\oplus Z'\oplus \Sigma X})-\rho(\id_{Z'}) - \rho(\id_{Z}) \nonumber.
\end{align}
The second equality in \eqref{eq:PropPreRankFunctions} proves statement (1). A similar identity holds for $\rho(f')+\rho(\Sigma f')$. Using the assumption on $f$ and $f'$ in (2), by \cite[Lemma A.1]{Krause2016}, the objects $Y\oplus Z'\oplus \Sigma X$ and $Y'\oplus Z\oplus \Sigma X'$ are isomorphic, so $\rho(\id_{Y\oplus Z'\oplus \Sigma X})=\rho(\id_{Y'\oplus Z\oplus \Sigma X'})$ following Remark \ref{remark functions}. Hence $\rho(f)+\rho(\Sigma f)=\rho(f')+\rho(\Sigma f')$, which proves assertion (2).
\end{proof}

In order to relate rank functions on $\cC$ to additive functions on $\mod \cC$, we need the following notion. 
 An additive function $\widetilde{\rho}$ on $\mod \cC$ is called \emphbf{$\Sigma$-invariant} if $\widetilde{\rho}({\Sigma}^*F)=\widetilde{\rho}(F)$ for every $F$ in $\mod \cC$. 
We are now ready to state our first main result.

\begin{theorem}
	\label{thm:TransInvariantAddFunctions<->rankFunctions}
	Let $\cC$ be a skeletally small triangulated category. There is a bijective correspondence between:
	\begin{enumerate}[(a)]
		\item $\Sigma$-invariant additive functions $\widetilde{\rho}$ on $\mod \cC$;
		\item rank functions $\rho$ on $\cC$.
	\end{enumerate}
	The correspondence maps an additive function $\widetilde{\rho}$ to the rank function $\rho$ given by $\rho(f)\coloneqq\widetilde{\rho}(\Imm \Hom_\cC(-,f))$. The inverse maps $\rho$ to the additive function $\widetilde{\rho}$ defined by $\widetilde{\rho}(F)\coloneqq\rho(f)$ for $F\cong \Imm \Hom_\cC(-,f)$.
\end{theorem}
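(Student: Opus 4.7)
The plan is to verify that the two assignments are mutually inverse bijections, with the technical content concentrated in producing the inverse map.

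For the passage from (a) to (b), the computations in Subsection 3.3 preceding the theorem already show that any $\Sigma$-invariant additive function $\widetilde{\rho}$ on $\mod\cC$ yields an assignment $\rho(f):=\widetilde{\rho}(\Imm\Hom_\cC(-,f))$ satisfying (M1), (M2) and (M3). For (M4), I would note the natural isomorphism $\Sigma^*\Hom_\cC(-,Y)\cong\Hom_\cC(-,\Sigma Y)$, which yields $\Sigma^*\Imm\Hom_\cC(-,f)\cong\Imm\Hom_\cC(-,\Sigma f)$, so that $\Sigma$-invariance of $\widetilde{\rho}$ immediately forces $\rho(\Sigma f)=\rho(f)$.

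For the reverse passage, the first task is to verify well-definedness of $\widetilde{\rho}(F):=\rho(f)$ whenever $F\cong\Imm\Hom_\cC(-,f)$. Here Lemma \ref{lem:PropPreRankFunctions}(2) combined with (M4) does the job: if $\Imm\Hom_\cC(-,f)\cong\Imm\Hom_\cC(-,f')$, then $\rho(f)+\rho(\Sigma f)=\rho(f')+\rho(\Sigma f')$, and (M4) collapses this to $\rho(f)=\rho(f')$. Non-negativity of $\widetilde{\rho}$ follows from (M1), and its $\Sigma$-invariance from (M4) via the identification $\Sigma^*\Imm\Hom_\cC(-,f)\cong\Imm\Hom_\cC(-,\Sigma f)$.

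The main step is the additivity of $\widetilde{\rho}$ on short exact sequences in $\mod\cC$, which I would handle in two substeps. First, for any triangle $X\xrightarrow{f}Y\xrightarrow{g}Z\to\Sigma X$ in $\cC$, the induced sequence
\[0\to\Imm\Hom_\cC(-,f)\to\Hom_\cC(-,Y)\to\Imm\Hom_\cC(-,g)\to 0\]
in $\mod\cC$ is short exact, and axiom (M3) rewritten as $\rho(\id_Y)=\rho(f)+\rho(g)$ is precisely the additivity identity for $\widetilde{\rho}$ on this distinguished class of sequences. Second, for an arbitrary short exact sequence $0\to F_1\to F_2\to F_3\to 0$, I would choose a representation $F_2\cong\Imm\Hom_\cC(-,f_2)$ with $f_2\colon X_2\to Y$, so that $F_2\hookrightarrow\Hom_\cC(-,Y)$, and then realize the subfunctor $F_1\hookrightarrow\Hom_\cC(-,Y)$ as $\Imm\Hom_\cC(-,f_1)$ for some $f_1\colon X_1\to Y$, using that any finitely presented subfunctor of a representable arises this way by Yoneda. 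Completing $f_1$ and $f_2$ to triangles provides three short exact sequences of the first (triangle-type) form, and organising them in a $3\times 3$ lemma diagram with leftmost non-trivial column $0\to F_1\to\Hom_\cC(-,Y)\to\Hom_\cC(-,Y)/F_1\to 0$ and middle column $0\to F_2\to\Hom_\cC(-,Y)\to\Hom_\cC(-,Y)/F_2\to 0$ yields the identity $\widetilde{\rho}(F_2)=\widetilde{\rho}(F_1)+\widetilde{\rho}(F_3)$ after adding and subtracting the three triangle-type identities supplied by the first substep.

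That the two constructions are mutually inverse is immediate from the defining formulas. The step I expect to require the most care is the $3\times 3$ reduction, in particular the compatibility of the lifts $f_1$ and $f_2$ to morphisms landing in the same object $Y$ and the identification of the quotients $\Hom_\cC(-,Y)/F_i$ as images of the corresponding rotated morphisms; both rely on the Frobenius structure of $\mod\cC$ and on standard use of the Yoneda lemma to pass between morphisms in $\cC$ and morphisms between representable functors.
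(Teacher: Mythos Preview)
Your passage from (a) to (b), the well-definedness argument, and the verification of $\Sigma$-invariance are fine and match the paper. The gap is in the additivity step.

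The $3\times 3$ diagram you describe does not assemble. With leftmost column $0\to F_1\to\Hom_\cC(-,Y)\to G_1\to 0$ and middle column $0\to F_2\to\Hom_\cC(-,Y)\to G_2\to 0$ sharing the same middle entry $\Hom_\cC(-,Y)$, the only natural horizontal map in the middle row is the identity, whose cokernel is zero; then the rightmost column would have to start $0\to F_3\to 0$, which is impossible for $F_3\neq 0$. Put differently, your two triangle-type identities yield $\widetilde{\rho}(F_2)-\widetilde{\rho}(F_1)=\widetilde{\rho}(G_1)-\widetilde{\rho}(G_2)$, so you still need additivity on $0\to F_3\to G_1\to G_2\to 0$; but $G_1$ is not representable, so this sequence is \emph{not} of triangle type, and iterating the same reduction (embedding $G_1$ into $\Hom_\cC(-,Z_1)$, etc.) never terminates. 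There are only two triangle-type identities available from your construction, not three.

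The paper's route is genuinely different and avoids this circularity. Rather than embedding the middle term $F$, it chooses presentations $F_i\cong\Imm\Hom_\cC(-,f_i)$ for the \emph{outer} terms $i=1,2$, uses the Horseshoe lemma in $\mod\cC$ to produce a compatible $f\colon X_1\oplus X_2\to Y_1\oplus Y_2$ presenting $F$, and then applies the \emph{triangulated} $3\times 3$ lemma (May) in $\cC$ to the resulting commuting square. The point is that the column of cones $Z_1\to Z\to Z_2\to\Sigma Z_1$ is forced to split, so $\Cone f\cong Z_1\oplus Z_2$; one then computes $2(\widetilde{\rho}(F_1)-\widetilde{\rho}(F)+\widetilde{\rho}(F_2))$ directly using Lemma~\ref{lem:PropPreRankFunctions}(1), (M2) and (M4) and finds it vanishes. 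The key insight you are missing is that the argument must take place in $\cC$ (where the octahedral/$3\times 3$ structure controls cones) rather than purely in $\mod\cC$.
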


\begin{proof}
	Let $\widetilde{\rho}$ be a $\Sigma$-invariant additive function on $\mod \cC$. As discussed before, the assignment $\rho$ given by $\rho(f)\coloneqq\widetilde{\rho}(\Imm \Hom_\cC(-,f))$ satisfies (M1), (M2) and (M3). Moreover, we have
	\begin{equation}
	\label{eq:qTransInv}
	\rho(\Sigma f)
	=\widetilde{\rho}(\Imm \Hom_\cC(-,\Sigma f))
	=\widetilde{\rho}( \Sigma^*(\Imm \Hom_\cC(-,f)))
	=\widetilde{\rho}( \Imm \Hom_\cC(-,f))
	=\rho( f).
	\end{equation}
	Hence, $\rho$ is a rank function on $\cC$ and the mapping $\widetilde{\rho} \mapsto \rho$ is well-defined and injective.
	
	In order to prove that the correspondence is surjective, consider a rank function $\rho$ on $\cC$. We want to define an additive function $\widetilde{\rho}$ on $\mod \cc$ such that $\rho(f)=\widetilde{\rho}(\Imm \Hom_\cC(-,f))$ for $f$ in $\cC$. Let $F$ be in $\mod \cC$ and set $\widetilde{\rho}(F)\coloneqq\rho(f)$, where $f$ is such that $F \cong \Imm \Hom_{\cC}(-,f)$. Then $\widetilde{\rho}(F)$ is well-defined, since any two morphisms $f$ and $f'$ in $\cC$ with $\Imm \Hom_{\cC}(-,f) \cong \Imm \Hom_{\cC}(-,f')$ satisfy the identity $\rho(f)=\rho(f')$ by Lemma~\ref{lem:PropPreRankFunctions}. Moreover, an identity analogous to \eqref{eq:qTransInv} guarantees that $\widetilde{\rho}$ is $\Sigma$-invariant. It remains to show that $\widetilde{\rho}(F)=\widetilde{\rho}(F_1)+\widetilde{\rho}(F_2)$ whenever $F$ is the extension of  $F_2$ by $F_1$ in $\mod \cC$. Consider such $F$, $F_1$ and $F_2$ in $\mod \cC$. Using the Frobenius structure on $\mod \cC$ (see Subsection \ref{subsection RF1}), there are morphisms $f_1:X_1\to Y_1$ and $f_2:X_2\to Y_2$ in $\cC$ such that $F_1\cong \Imm \Hom_{\cC}(-,f_1)$ and $F_2\cong \Imm \Hom_{\cC}(-,f_2)$. We can complete the following commutative diagram using the Horseshoe lemma
	\[
	\begin{tikzcd}
	&0 \ar[d]& \\
	\Hom_\cC (-,X_1) \ar[d,  "\iota^\dagger"']\ar[r, two heads]  & F_1 \ar[r, hook]\ar[d] & \Hom_\cC (-,Y_1)\ar[d,  "\kappa^\dagger"]\\
	\Hom_\cC (-,X_1 \oplus X_2) \ar[r, , dashed, two heads] \ar[d,  "\pi^\dagger"'] & F \ar[r, dashed,hook] \ar[d]& \Hom_\cC (-,Y_1 \oplus Y_2). \ar[d, "\nu^\dagger"] \\
	\Hom_\cC (-,X_2)  \ar[r, two heads]  & F_2 \ar[r, hook] \ar[d] & \Hom_\cC (-,Y_2) \\
	&0&
	\end{tikzcd}
	\]
	Here the morphisms $\iota^\dagger,\pi^\dagger$ and $\kappa^\dagger, \nu^\dagger$ come from split triangles in $\cc$ and the top and the bottom rows are induced from $f_1$ and $f_2$. By the Yoneda lemma, the middle row of the diagram above stems from a morphism $f$ in $\cC$ and we have a commutative diagram whose columns split
	\begin{equation}
	\label{eq:two_squares}
	\begin{tikzcd}
	X_1 \ar[r, "f_1"] \ar[d,"\iota"] & Y_1  \ar[d,"\kappa"]\\
	X_1 \oplus X_2 \ar[r, "f"] \ar[d,"\pi"] & Y_1 \oplus Y_2.\ar[d,"\nu"] \\
	X_2 \ar[r, "f_2"] & Y_2 
	\end{tikzcd}
    \end{equation}
	Note that $f_2=\nu \circ f \circ \iota'$, where $\iota'$ is a section of $\pi$, so $f_2$ is the unique morphism making the bottom square of \eqref{eq:two_squares} commute. By the triangulated version of the $3\times3$ lemma (see \cite[Lemma 2.6]{May2001}), the diagram \eqref{eq:two_squares} can be completed to a diagram with triangles as rows and columns, as partially depicted below:
	\[
	\begin{tikzcd}
	X_1 \ar[r, "f_1"] \ar[d,"\iota"] & Y_1  \ar[d,"\kappa"] \ar[r] & Z_1 \ar[d, dashed, "\exists\,\gamma"]\ar[r] & \Sigma X_1 \ar[d,"\Sigma \iota"] \\
	X_1 \oplus X_2 \ar[r, "f"] \ar[d,"\pi"] & Y_1 \oplus Y_2 \ar[r]  \ar[d,"\nu"]& Z \ar[d, dashed,"\exists\,\delta"] \ar[r] & \Sigma (X_1 \oplus X_2). \ar[d,"\Sigma \pi"]\\
	X_2 \ar[r,  "f_2"] & Y_2 \ar[r, dashed] & Z_2 \ar[r, dashed] & \Sigma X_2
	\end{tikzcd}
	\]
	This induces a commutative diagram in $\mod \cC$
	\begin{equation}
	\label{eq:3x5diagram}
	\begin{tikzcd}
	[column sep=small]
	&0 \ar[d]& 0 \ar[d]& 0 \ar[d]& 0 \ar[d]& 0 \ar[d] &\\
	0 \ar[r] &F_1 \ar[d] \ar[r]& \Hom_\cC(-,Y_1) \ar[d]\ar[r]& \Hom_\cC(-,Z_1)\ar[d, "\gamma^\dagger"]\ar[r]& \Hom_\cC(-,\Sigma X_1)\ar[d]\ar[r] & \Sigma^*F_1 \ar[d]\ar[r]&  0\\
	0 \ar[r] &F \ar[d] \ar[r]& \Hom_\cC(-,Y_1\oplus Y_2) \ar[d]\ar[r]& \Hom_\cC(-,Z)\ar[d, "\delta^\dagger"]\ar[r]& \Hom_\cC(-,\Sigma (X_1\oplus X_2))\ar[d]\ar[r] & \Sigma^*F \ar[d]\ar[r]&  0,\\
	0 \ar[r] &F_2 \ar[d] \ar[r]& \Hom_\cC(-,Y_2) \ar[d]\ar[r]& \Hom_\cC(-,Z_2)\ar[d]\ar[r]& \Hom_\cC(-,\Sigma X_2)\ar[d]\ar[r] & \Sigma^*F_2 \ar[d]\ar[r]&  0\\
	&0&0&0&0&0&
	\end{tikzcd}
	\end{equation}
	where the rows and all columns, except possibly the middle one, are exact. We claim that the central column is exact too. For that, we apply the $3 \times 3$ lemma three times. Consider the two leftmost columns in \eqref{eq:3x5diagram} and take the cokernels of the horizontal monomorphisms --  in this way we obtain a third column which must be a short exact sequence by the $3 \times 3$ lemma. A similar strategy is applied to the two rightmost columns: we consider the kernels of the horizontal epimorphisms and get a third column which is exact. It is then possible to enclose the middle column of \eqref{eq:3x5diagram} between two exact columns, so that we obtain a $3\times 3$ diagram whose rows and outermost columns are exact. The central column of \eqref{eq:3x5diagram} is therefore exact, since $\delta^\dagger\circ \gamma^\dagger=0$. In particular, it is split exact, since all objects involved are projective-injective in $\mod \cC$. Thus, the triangle 
	\[
	\begin{tikzcd}
 Z_1 \ar[r, "\gamma"] & Z \ar[r, "\delta"] & Z_2 \ar[r] & \Sigma Z_1
	\end{tikzcd}
	\]
	is split and $Z\cong Z_1 \oplus Z_2$. As a result, we get
	\begin{align}
	\label{eq:additiveToRank}
	2(\widetilde{\rho}(F_1) - \widetilde{\rho}(F) + \widetilde{\rho}(F_2)) = &  2(\rho(f_1)-\rho(f)+\rho(f_2))\\
	=&(\rho(f_1)+\rho(\Sigma f_1))-(\rho(f)+\rho(\Sigma f))+(\rho(f_2)+\rho(\Sigma f_2))\nonumber\\
	=& \begin{multlined}[t]
	(\rho(\id_{Y_1})-\rho(\id_{Z_1})+\rho(\id_{\Sigma X_1})) -(\rho(\id_{Y_1\oplus Y_2})-\rho(\id_{Z_1\oplus Z_2})+\rho(\id_{\Sigma (X_1\oplus X_2)}))  \\+ (\rho(\id_{Y_2})-\rho(\id_{Z_2})+\rho(\id_{\Sigma X_2})) \end{multlined}\nonumber\\
	=& 0.\nonumber
	\end{align}
	Here, the second equality follows from (M4), the third is a consequence of Lemma \ref{lem:PropPreRankFunctions} and the last equality results from (M2). This finishes the proof.
\end{proof}

\begin{remark}
\label{rmk:inequalities}
 The inequalities $\rho(f\circ g) \leq \rho(f)$ and $\rho(f\circ g) \leq \rho(g)$, proved in \cite{Chuang2021}, follow from Theorem \ref{thm:TransInvariantAddFunctions<->rankFunctions} noting that $\Imm \Hom_\cc(-,f\circ g)$ is a subobject of $\Imm\Hom_\cc(-,f)$ and a quotient of $\Imm\Hom_\cc(-,g)$. Likewise the inequalities $\rho(f)+\rho(g)\leq \rho\begin{pmatrix}f & h \\ 0& g\end{pmatrix}$ and $\rho(f+g)\leq\rho(f)+\rho(g)$ in \cite{Chuang2021} follow from Theorem \ref{thm:TransInvariantAddFunctions<->rankFunctions} and~(M2). For the first inequality, note that $F\coloneqq\Imm \Hom_\cc(-,\begin{pmatrix}f\\ 0\end{pmatrix})$ is a subobject of $H\coloneqq\Imm \Hom_\cc(-,\begin{pmatrix}f & h\\ 0 &g\end{pmatrix})$ and that $G\coloneqq\Imm \Hom_\cc(-,\begin{pmatrix}0& g\end{pmatrix})$ is a quotient of $H$. Let $C$ be the cokernel of the canonical monomorphism $\iota: F\to H$. Note that $C$ lies in $\mod \cc$ and
 \[
 \rho\begin{pmatrix}f & h \\ 0& g\end{pmatrix}=\widetilde{\rho}(H)=\widetilde{\rho}(F)+\widetilde{\rho}(C)=\rho \begin{pmatrix}f\\ 0\end{pmatrix}+\widetilde{\rho}(C)=\rho(f)+\widetilde{\rho}(C).
 \]
 It is not hard to check that the canonical epimorphism $\pi: H \to G$ is such that $\pi\circ \iota =0$, so $\pi$ must factor through the cokernel of $\iota$. Hence $G$ is a quotient of $C$ and $\widetilde{\rho}(C)\geq \widetilde{\rho}(G)=\rho(g)$, which proves the desired inequality. For the inequality $\rho(f+g)\leq\rho(f)+\rho(g)$, notice that $\Imm\Hom_\cc (-,f+g)$ is a subquotient of $\Imm\Hom_\cc (-,f\oplus g)$.
 
\end{remark}

\begin{remark} 
There is a result due to Krause 
similar to Theorem \ref{thm:TransInvariantAddFunctions<->rankFunctions}, but concerning slightly different classes of functions (see \cite{Krause2016}). One side of the bijective correspondence there is given by integral additive functions $\widetilde{\rho}$ on $\mod \cC$ satisfying the following condition, instead of $\Sigma$-invariance: for each $F$ in $\mod \cC$, there exists $n \in \mathbb{Z}$ such that $\widetilde{\rho}((\Sigma^*)^nF) = 0.$ On the other side of the bijection, instead of rank functions, Krause considers functions for which (O3) and (O4) in Definition \ref{def:rankfunctionobj} are replaced by other suitable conditions, and which he calls cohomological functions. A typical example of a cohomological function is given by $Y \mapsto \dim_k\Hom_{\operatorname{D^b}(A)}(X,Y)$ for $Y$ in $\operatorname{D^b}(A)$ and some fixed object $X$ in $\cper (A)$, where $A$ is a finite-dimensional algebra over a field $k$ (compare with Example \ref{ex:fin-dim_alg}). Results similar to \cite[Theorem 1.4.(3)]{Krause2016} will be discussed in the next section.
\end{remark}

\section{Integral rank functions}
\label{sec:Integral}
 We apply the general theory from Section \ref{sec:preliminaries} and the conclusions from Section \ref{sec:RankFunctions} to study integral rank functions, which will be our main focus for the rest of the paper. 
 
 Assume, as before, that $\cC$ is a skeletally small triangulated category.
 We call a rank function $\rho$ on $\cC$  \emphbf{integral} if $\rho(f)\in \Z$ for every morphism $f$ in $\cc$. Note that similarly to the case of additive functions, we can define a \emphbf{locally finite sum}  of rank functions $(\rho_i)_{i\in I}$ whenever for each $f$ in $\cC$ only finitely many $\rho_i(f)$ are non-zero. Observe that the class of integral rank functions is  closed under locally finite sums.
Given a rank function $\rho$ on $\cC$, we shall 
denote its restriction to objects by $\rho_{ob}$ and its lift to $\mod \cc$ by $\widetilde{\rho}$. We say that an additive function is \emphbf{$\Sigma$-irreducible} if it is $\Sigma$-invariant, integral, non-zero, and cannot be expressed as a sum of two non-zero $\Sigma$-invariant integral additive functions. Similarly, a rank function is said to be \emphbf{irreducible} if it is integral, non-zero, and cannot be written as a sum of two non-zero integral rank functions.

\begin{remark}
	\label{rmk:IntegralRankFunc<->SigmaInvAddIntFunc}
	The bijection in Theorem \ref{thm:TransInvariantAddFunctions<->rankFunctions} restricts to a bijection between ($\Sigma$-irreducible) $\Sigma$-invariant integral additive functions on $\mod \cC$ and (irreducible) integral rank functions on $\cC$.
\end{remark}

\subsection{Decomposition theorem and further bijections}

The goal of this subsection is to prove a decomposition theorem for integral rank functions and to connect rank functions to cohomological functors and Serre subcategories of $\mod \cC$.

Given two integral additive functions $\widetilde{\sigma}$ and $\widetilde{\tau}$ on $\mod \cC$, we say that $\widetilde{\sigma}$ is \emphbf{$\Sigma$-equivalent} to $\widetilde{\tau}$ if $\widetilde{\sigma} = \widetilde{\tau}^{ {(\Sigma^*)}^n}$ for some $n\in \Z$. This defines an equivalence relation on the set of all integral additive functions on $\mod \cC$. 
	The \emphbf{$\Sigma$-orbit} $O_{\widetilde{\rho}}$ of an integral additive function $\widetilde{\rho}$ on $\mod \cC$ is the equivalence class of integral additive functions  $\Sigma$-equivalent to $\widetilde{\rho}$, i.e.~$O_{\widetilde{\rho}}=\{\widetilde{\rho}^{(\Sigma^*)^n} \mid n \in \Z\}$.
 
Note that, by Lemma \ref{lem:AddFuncPrecomposedFunctor}, an additive function $\widetilde{\rho}$ on $\mod \cC$ is irreducible if and only if $\widetilde{\rho}^{\Sigma^*}$ is irreducible, as $\Sigma^*$ and $(\Sigma^*)^{-1}$ are essentially surjective. In particular, if $\widetilde{\rho}$ is irreducible, then so is every function in $O_{\widetilde{\rho}}$.

\begin{theorem}
\label{thm:decomposition}
	Let $\cC$ be a skeletally small triangulated category. The following statements hold:
	\begin{enumerate}
		\item every $\Sigma$-invariant integral additive function on $\mod \cC$ can be expressed in a unique way as a locally finite sum of $\Sigma$-irreducible additive functions on $\mod \cC$;
		\item every integral rank function on $\cC$ can be decomposed in a unique way as a locally finite sum of irreducible rank functions on $\cC$.
	\end{enumerate} 
\end{theorem}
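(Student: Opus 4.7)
The plan is to deduce both statements from the general decomposition theorem for integral additive functions on skeletally small abelian categories (the second assertion of Theorem \ref{thm:IrrAdditiveFunctions<->InjFinEndolength}), by carefully analysing how $\Sigma^*$-precomposition permutes the irreducible summands and then grouping them by $\Sigma$-orbits.

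For (1), start with a $\Sigma$-invariant integral additive function $\widetilde{\rho}$ on $\mod\cC$ and apply Theorem \ref{thm:IrrAdditiveFunctions<->InjFinEndolength} to write $\widetilde{\rho}=\sum_{i\in I} m_i\,\widetilde{\rho}_i$ as a locally finite sum, with the $\widetilde{\rho}_i$ pairwise distinct irreducible additive functions and $m_i\geq 1$. Since $\Sigma^*:\Mod\cC\to\Mod\cC$ is an equivalence, in particular essentially surjective, Lemma \ref{lem:AddFuncPrecomposedFunctor} shows that precomposition with $\Sigma^*$ preserves irreducibility. Applying $\Sigma^*$-precomposition to the decomposition and invoking the identity $\widetilde{\rho}=\widetilde{\rho}^{\Sigma^*}$ together with the uniqueness part of Theorem \ref{thm:IrrAdditiveFunctions<->InjFinEndolength} shows that $\Sigma^*$-precomposition permutes the set $\{\widetilde{\rho}_i\}_{i\in I}$ and that the multiplicity function $i\mapsto m_i$ is constant on each orbit.

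Partition $I$ into its $\Sigma$-orbits and, for each orbit $O$, set $\widetilde{\sigma}_O\coloneqq\sum_{\widetilde{\rho}_i\in O}\widetilde{\rho}_i$. This sub-sum inherits local finiteness from $\widetilde{\rho}$ and is $\Sigma$-invariant by construction. I would then argue that each $\widetilde{\sigma}_O$ is $\Sigma$-irreducible: any decomposition $\widetilde{\sigma}_O=\widetilde{\tau}_1+\widetilde{\tau}_2$ into two non-zero $\Sigma$-invariant integral additive functions would, after decomposing each $\widetilde{\tau}_j$ into irreducibles and re-running the permutation argument inside each $\widetilde{\tau}_j$, force both $\widetilde{\tau}_j$ to contain every $\widetilde{\rho}_i\in O$ with positive multiplicity; this contradicts the fact that in $\widetilde{\sigma}_O$ each $\widetilde{\rho}_i\in O$ appears with multiplicity exactly one. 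Hence $\widetilde{\rho}=\sum_O m_O\,\widetilde{\sigma}_O$ (where $m_O$ is the common orbit multiplicity) is the desired $\Sigma$-irreducible decomposition, and uniqueness descends from that of Theorem \ref{thm:IrrAdditiveFunctions<->InjFinEndolength} via the orbit bijection just established.

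For (2), combine (1) with the bijection of Theorem \ref{thm:TransInvariantAddFunctions<->rankFunctions}, which by Remark \ref{rmk:IntegralRankFunc<->SigmaInvAddIntFunc} matches ($\Sigma$-irreducible) $\Sigma$-invariant integral additive functions on $\mod\cC$ with (irreducible) integral rank functions on $\cC$. This correspondence is additive, and since $\rho(f)=\widetilde{\rho}(\Imm\Hom_\cC(-,f))$, local finiteness on one side translates into local finiteness on the other, so the decomposition from (1) transports to the claimed decomposition of integral rank functions. The main obstacle is the $\Sigma$-irreducibility step in (1), which requires pinning down how the $\Sigma^*$-permutation of irreducibles interacts with their multiplicities within a single orbit.
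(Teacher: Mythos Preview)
Your proposal is correct and follows essentially the same approach as the paper's proof: decompose into irreducibles via Theorem~\ref{thm:IrrAdditiveFunctions<->InjFinEndolength}, observe that $\Sigma^*$-precomposition permutes the irreducibles (hence multiplicities are constant on orbits), group into orbit sums, and verify these are $\Sigma$-irreducible. Your argument for $\Sigma$-irreducibility of an orbit sum is in fact more detailed than the paper's, which simply asserts ``Clearly, the function $\widetilde{\sigma}_1$ is $\Sigma$-irreducible''; likewise, you make the constancy of multiplicities on orbits explicit, whereas the paper leaves it implicit in the bijection $\varphi$.
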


\begin{proof}
	In order to prove (1), consider a $\Sigma$-invariant integral additive function $\widetilde{\rho}$. According to Theorem \ref{thm:IrrAdditiveFunctions<->InjFinEndolength}, $\widetilde{\rho}$ decomposes as a locally finite sum of irreducible additive functions $\widetilde{\rho}=\sum_{i\in I} \widetilde{\rho}_i$ in an essentially unique way. Observe that
	\[
	\sum_{i\in I} \widetilde{\rho}_i=\widetilde{\rho}=\widetilde{\rho}^{\Sigma^*}=\sum_{i\in I} {\widetilde{\rho}_i}^{\Sigma^*}.
	\]
	Since the additive functions ${\widetilde{\rho}_i}^{\Sigma^*}$ are irreducible, there exists a bijection $\varphi:I\to I$ such that ${\widetilde{\rho}_i}^{\Sigma^*}=\widetilde{\rho}_{\varphi (i)}$ for every $i\in I$. Fix $j\in I$. Since ${\widetilde{\rho}_{j}}^{{(\Sigma^*)}^n}=\widetilde{\rho}_{\varphi^n (j)}$ for all $n\in\Z$, every function in the $\Sigma$-orbit $O_{\widetilde{\rho}_{j}}$ appears as a summand in the decomposition $\widetilde{\rho}=\sum_{i\in I} \widetilde{\rho}_i$. Hence $\widetilde{\rho}$ can be expressed as $\widetilde{\rho}=\widetilde{\sigma}_1+\widetilde{\sigma}_2$ with $\widetilde{\sigma}_1=\sum_{\widetilde{\tau}\in O_{\widetilde{\rho}_{j}}} \widetilde{\tau}$, and this decomposition of $\widetilde{\sigma}_1$ is locally finite. Since both $\widetilde{\rho}$ and $\widetilde{\sigma}_1$ are $\Sigma$-invariant, so is $\widetilde{\sigma}_2$.  Clearly, the function $\widetilde{\sigma}_1$ is $\Sigma$-irreducible. In case $\widetilde{\rho}$ is $\Sigma$-irreducible, $\widetilde{\sigma}_2$ must be zero. For the general case, we have that $\{\widetilde{\rho}_i\}_{i\in I}=\bigcup_{i\in I} O_{\widetilde{\rho}_i}$ and one can use the axiom of choice to decompose this set as a disjoint union $\coprod_{k\in K}O_{\widetilde{\rho}_{k}}$ with $k\in K\subseteq I$. As a result, we get that 
	\[\widetilde{\rho}=\sum_{k\in K}\sum_{\substack{i\in I\\ \widetilde{\rho}_i=\widetilde{\rho}_k}}\sum_{\widetilde{\tau}\in O_{\widetilde{\rho}_{k}}} \widetilde{\tau}.\]
	This proves part (1). Part (2) follows from (1) using Theorem \ref{thm:TransInvariantAddFunctions<->rankFunctions} (see also Remark \ref{rmk:IntegralRankFunc<->SigmaInvAddIntFunc}).
\end{proof}

In order to prove our next result 
we need some further terminology.
We say that a rank function is \emphbf{basic} if it is integral and the summands appearing in its decomposition as a locally finite sum of irreducible rank functions, as in Theorem \ref{thm:decomposition}, are pairwise distinct. Moreover, a functor $H$ in $\Mod \cC$ is said to be \emphbf{$\Sigma$-invariant} if $\Sigma^* H\cong H$, i.e. $H\circ \Sigma$ is naturally isomorphic to $H$. Finally, any class $\mathcal{Y}$ of objects (resp.~morphisms) in $\cC$ shall be called \emphbf{$\Sigma$-closed} if $\mathcal{Y}=\{\Sigma Y\mid Y\text{ in }\mathcal{Y} \}$ (resp.~$\mathcal{Y}=\{\Sigma f\mid f\text{ in }\mathcal{Y} \}$). By abuse of notation, any class of objects or morphisms in $\Mod \cc$ which is mapped onto itself by  $\Sigma^*$ will also be called $\Sigma$-closed. Since the classes we will be considering are closed under isomorphisms, this amounts to requiring that the action of $\Sigma$ or $\Sigma^*$ and of the respective quasi-inverses stays within the class. Similarly, we can consider \emphbf{$\Sigma$-orbits} of objects in this situation.

\begin{theorem}
\label{thm:LongBijRankFunc}
Let $\cC$ be a skeletally small triangulated category. There is a bijective correspondence between:
\begin{enumerate}[(a)]
	\item basic rank functions on $\cC$;
	\item isoclasses of $\Sigma$-invariant basic endofinite cohomological functors in $\Mod \cC$;
	\item $\Sigma$-closed Serre subcategories $\cS$ of $\mod \cc$ such that the quotient $(\mod \cC)/\cS$ is a length category;
	\item $\Sigma$-closed hereditary torsion subcategories of finite type $\cx$ of $\Mod \cC$ with $(\Mod \cC)/\cx$ locally finite;
	\item $\Sigma$-closed closed discrete subsets of $\Zg (\Mod \cC)$ satisfying the isolation property.
\end{enumerate}
\end{theorem}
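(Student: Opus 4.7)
My plan is to apply Theorem~\ref{thm:lastPreliminaries} to the locally coherent category $\cA = \Mod\cC$ and to restrict each of the five resulting bijections to the $\Sigma$-invariant (respectively, $\Sigma$-closed) subclasses. Two ingredients make this work smoothly: Corollary~\ref{cor:CohEndofin=InjFinEndoLength}, which identifies injective objects of finite endolength in $\Mod\cC$ with endofinite cohomological functors; and Theorem~\ref{thm:TransInvariantAddFunctions<->rankFunctions} together with Remark~\ref{rmk:IntegralRankFunc<->SigmaInvAddIntFunc}, which identify $\Sigma$-invariant integral additive functions on $\mod\cC$ with integral rank functions on $\cC$.

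The first step is to verify that each of the five bijections in Theorem~\ref{thm:lastPreliminaries} is $\Sigma$-equivariant. The key observation is that $\Sigma^*$ is an exact auto-equivalence of $\Mod\cC$ which restricts to an auto-equivalence of $\mod\cC$. For the bijection between isoclasses of basic injectives of finite endolength and basic additive functions, a direct adjunction computation gives $\widetilde{\rho}_{\Sigma^*H}(F) = \length_{\End(H)}\Hom_{\Mod\cC}(F,\Sigma^*H) = \widetilde{\rho}_H((\Sigma^*)^{-1}F)$; the uniqueness part of Theorem~\ref{thm:IrrAdditiveFunctions<->InjFinEndolength} applied to basic objects then yields that $H$ is $\Sigma$-invariant if and only if $\widetilde{\rho}_H$ is. For Serre subcategories, one invokes Remark~\ref{pullback}: the additive function corresponding to $\cS$ is $\widetilde{\ell}^E$, where $E\colon\mod\cC \to (\mod\cC)/\cS$ is the quotient functor, and $\Sigma$-closedness of $\cS$ is equivalent to $\Sigma^*$ descending to an auto-equivalence of the quotient, which in turn forces $\Sigma$-invariance of $\widetilde{\ell}^E$. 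The remaining bijections (with $\overrightarrow{\cS}$ and with $\Zg\,\Mod\cC \setminus \co(\cS)$) are determined by $\cS$ in a manifestly $\Sigma^*$-equivariant way, since $\Sigma^*$ preserves filtered colimits and acts on $\Zg\,\Mod\cC$ by homeomorphism.

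The second step is to verify that a basic rank function on $\cC$ corresponds precisely to a $\Sigma$-invariant basic additive function on $\mod\cC$ under Theorem~\ref{thm:TransInvariantAddFunctions<->rankFunctions}. If $\widetilde{\rho}$ is $\Sigma$-invariant and basic as an additive function, write $\widetilde{\rho} = \sum_{i\in I}\widetilde{\rho}_i$ with the $\widetilde{\rho}_i$ pairwise distinct irreducibles; the set $\{\widetilde{\rho}_i\}_{i\in I}$ is $\Sigma$-closed and partitions into pairwise disjoint $\Sigma$-orbits, and grouping summands over each orbit yields exactly the decomposition of $\widetilde{\rho}$ into pairwise distinct $\Sigma$-irreducible $\Sigma$-invariant summands guaranteed by Theorem~\ref{thm:decomposition}(1). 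The converse runs analogously: distinctness of the $\Sigma$-orbits in a basic rank function's decomposition forces distinctness of the underlying irreducible summands.

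The main technical subtlety, and arguably the only delicate point, is the bookkeeping that reconciles the two notions of basicness (distinct irreducibles on the additive-function side versus distinct $\Sigma$-irreducible $\Sigma$-invariant summands on the rank-function side); every other step reduces to checking that a bijection from earlier in the paper respects the action of $\Sigma^*$, which it does essentially by construction.
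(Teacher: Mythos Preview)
Your proposal is correct and follows essentially the same route as the paper: apply Theorem~\ref{thm:lastPreliminaries} to $\Mod\cC$, identify finite-endolength injectives with endofinite cohomological functors via Corollary~\ref{cor:CohEndofin=InjFinEndoLength}, and check that each bijection is $\Sigma$-equivariant (the paper compresses this into the identity $\widetilde{\rho}_H=(\widetilde{\rho}_{\Sigma^*H})^{\Sigma^*}$ together with the uniqueness from Theorem~\ref{thm:lastPreliminaries}). Your treatment of the two notions of basicness via the orbit decomposition of Theorem~\ref{thm:decomposition}(1) is exactly what the paper invokes when it cites Theorems~\ref{thm:TransInvariantAddFunctions<->rankFunctions} and~\ref{thm:decomposition} for the bijection between basic rank functions and basic $\Sigma$-invariant additive functions.
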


\begin{proof}
By Theorems \ref{thm:TransInvariantAddFunctions<->rankFunctions} and \ref{thm:decomposition}, basic rank functions on $\cC$ are in bijection with basic $\Sigma$-invariant additive functions on $\mod \cC$. Recall that, by Corollary \ref{cor:CohEndofin=InjFinEndoLength}, endofinite cohomological functors are precisely the finite endolength injectives in $\Mod \cC$. By Theorem \ref{thm:lastPreliminaries}, a basic endofinite cohomological functor $H$ corresponds to a unique basic additive function $\widetilde{\rho}_{H}$. Therefore,  $\Sigma^* H \cong H$ if and only if $\widetilde{\rho}_H = \widetilde{\rho}_{\Sigma^*H}.$ By combining this with the identity $\widetilde{\rho}_H=(\widetilde{\rho}_{\Sigma^*H})^{\Sigma^*}$, which follows from the fact that $\Sigma^*$ is an equivalence on $\Mod \cC$, we obtain the bijection between (a) and (b).
The remaining bijections follow from Theorem \ref{thm:lastPreliminaries} and the assignments therein.
\end{proof}

To get the bijections between (a)-(e) explicitly, one can use the assignments from Theorems \ref{thm:TransInvariantAddFunctions<->rankFunctions} and \ref{thm:lastPreliminaries}. This yields the following chain of assignments \[\rho \mapsto \overrightarrow{\Ker\widetilde{\rho}}, \text{ } \cx \mapsto \cx \cap \fp \cA, \text{ } \cS \mapsto \Zg \ca \setminus\co (\cS), \text{ } C \mapsto \coprod_{[H]\in C} H, \text{ } H \mapsto \rho_H\] going from (a) to (d), to (c), to (e), to (b), and back to (a). Since irreducible rank functions are in bijection with $\Sigma$-irreducible additive functions, the correspondence between (a) and (b) in Theorem \ref{thm:LongBijRankFunc} restricts to a bijection between  irreducible rank functions on $\cc$ and $\Sigma$-indecomposable $\Sigma$-invariant endofinite cohomological functors in $\Mod \cC$. Here, we call a non-zero $\Sigma$-invariant endofinite cohomological functor \emphbf{$\Sigma$-indecomposable} if it can not be decomposed as a direct sum of two non-zero cohomological functors of the same kind.
The counterpart of (e) are then closed discrete $\Sigma$-orbits of  points in the Ziegler spectrum satisfying the isolation property, or equivalently, $\Sigma$-orbits of endofinite objects in the Ziegler spectrum such that the coproduct over the orbit is still endofinite (see Remark \ref{IsolatedRemark}).
The corresponding restriction of (c) are $\Sigma$-closed Serre subcategories $\cS$ of $\mod \cC$ such that  $(\mod \cC)/\cS$ is a length category with one simple object up to translation (induced from $\Mod \cc$). These correspond to locally finite quotients $(\Mod \cC)/\overrightarrow{S}$ with one indecomposable injective object up to translation.
Note that the translation $\overline{\Sigma^*}$ on $(\Mod\cc)/\overrightarrow{S}$ induced from $\Sigma^*$ on $\Mod\cc$ is still an equivalence of categories and $\overline{\Sigma^*}\circ E\cong E\circ \Sigma^*$ and $\Sigma^* \circ R \cong R \circ \overline{\Sigma^*}$ for $E$ and $R$ as in \eqref{eq:loc}. Therefore, the last part follows, for example, from the fact that the homeomorphism $\Zg (\ca/ \overrightarrow{S})\to \Zg \cA \setminus \co (\cS)$ induced by $R$ commutes with the translations and from the bijection between simple objects in $\fp\cA$ and indecomposable injectives in $\cA$ for a locally finite category~$\cA$.

Skeletally small triangulated categories $\cc$ whose module category $\Mod\cc$ is locally finite will play an important role in our study of rank functions. Such triangulated categories shall also be called \emphbf{locally finite} by abuse of terminology. Here is the simplified statement of Theorem \ref{thm:LongBijRankFunc} in that case:

\begin{corollary}
\label{thm:LocFinRankFunc}
If the triangulated category $\cC$ is locally finite, there is a bijective correspondence between:
\begin{enumerate}[(a)]
	\item basic rank functions on $\cC$;
	\item isoclasses of $\Sigma$-invariant basic cohomological functors in $\Mod \cC$;
	\item $\Sigma$-closed Serre subcategories of $\mod \cc$;
	\item $\Sigma$-closed hereditary torsion subcategories of finite type of $\Mod \cc$;
	\item $\Sigma$-closed subsets of $\Zg (\Mod \cC)$.
\end{enumerate}
\end{corollary}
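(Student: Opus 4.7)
The plan is to deduce this corollary by invoking Theorem \ref{thm:LongBijRankFunc} directly and verifying that each of the extra qualifying conditions in its statement becomes automatic once $\Mod\cC$ is locally finite. Concretely, the bijections between (a)--(e) in Theorem \ref{thm:LongBijRankFunc} still apply, but I must check that in the locally finite setting (i)~in (b) every basic cohomological functor is automatically endofinite, (ii)~in (c) the Serre quotient $(\mod\cC)/\cS$ is automatically a length category, (iii)~in (d) the hereditary torsion quotient $(\Mod\cC)/\cx$ is automatically locally finite, and (iv)~in (e) every closed subset is automatically discrete and satisfies the isolation property.

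For (b), I would argue as follows: by Lemma \ref{lem:cohomologicalFpinjectiveFlat}, cohomological functors in $\Mod\cC$ coincide with fp-injective objects; by the characterisation of injectives in locally finite categories recalled in Section \ref{subsection lcc}, these in turn coincide with the injective objects; and by Lemma \ref{lem:injFinEndolength} every injective in a locally finite category has finite endolength. Hence every cohomological functor is endofinite, and the decomposition as a coproduct of indecomposables with local endomorphism rings (built into ``basic'') is guaranteed by the finite-endolength decomposition theorem cited in Section \ref{subsection lcc}. For (c), I would use the standard fact that Serre subcategories of length categories have length Serre quotients (composition length is additive on short exact sequences). Condition (iii) then follows from (ii) via the identification $\fp((\Mod\cC)/\overrightarrow{\cS})\simeq(\mod\cC)/\cS$ in point (2) of Subsection \ref{subsection:fundamental_corresp}.

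For (e), I would appeal to the characterisation mentioned just before Theorem \ref{thm:lastPreliminaries}: a locally coherent category is locally finite if and only if its Ziegler spectrum is discrete and satisfies the isolation property. Since $\Mod\cC$ is locally finite by assumption, $\Zg(\Mod\cC)$ is itself discrete and every closed subset inherits this property, so the qualifiers ``discrete'' and ``satisfying the isolation property'' in (e) of Theorem \ref{thm:LongBijRankFunc} hold automatically for all closed subsets.

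The main delicate point, and the one I would be most careful about, is verifying (i): I need to confirm that every cohomological functor (not just every finitely presented one) is injective in the locally finite case, so that Lemma \ref{lem:injFinEndolength} applies and the endofiniteness condition can be dropped. Provided this is clean, the rest is bookkeeping: the explicit chain of assignments spelled out after Theorem \ref{thm:LongBijRankFunc} restricts verbatim to the locally finite case, giving the simplified correspondences as stated. I would close the proof by noting that all five sets in (a)--(e) are parametrised by the same underlying data once the automatic qualifiers are removed.
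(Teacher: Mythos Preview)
Your proposal is correct and follows essentially the same approach as the paper: invoke Theorem \ref{thm:LongBijRankFunc} and show that each extra qualifier becomes vacuous when $\Mod\cC$ is locally finite. Your treatment of (b), (c), and (d) matches the paper's reasoning (with your direct length-category argument for (c) being a clean alternative to the paper's route via Theorem \ref{thm:ZieglerSpecLocallyCohCat}).

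One small point you should make explicit: in passing from (e) of Theorem \ref{thm:LongBijRankFunc} to (e) here, you drop not only ``discrete'' and ``isolation property'' but also the word \emph{closed}. Your argument establishes the first two qualifiers for all closed subsets, but you still need to say that discreteness of $\Zg(\Mod\cC)$ forces every subset to be closed in the Ziegler topology, so that ``$\Sigma$-closed subset'' already means ``$\Sigma$-closed closed subset''. The paper handles (e) via a slightly different route, using the alternative description (e') from Remark \ref{IsolatedRemark}: since any coproduct of injectives in a locally finite category is again injective of finite endolength, every $\Sigma$-closed subset automatically lands in the class appearing in Theorem \ref{thm:LongBijRankFunc}(e). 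Both arguments work; yours is arguably more transparent once the ``every subset is closed'' observation is added.
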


\begin{proof}
Recall that cohomological functors are precisely the fp-injective objects in $\Mod \cC$ by Lemma \ref{lem:cohomologicalFpinjectiveFlat}. If $\cC$, and thus $\Mod \cC$, is locally finite, these are in turn the injective objects in $\Mod \cC$, which all have finite endolength by Lemma \ref{lem:injFinEndolength}. Now, any coproduct of  injectives in $\Mod\cC$ is an injective of finite endolength, so any $\Sigma$-closed subset of $\Zg (\Mod \cC)$ appears as a subset from (e) in Theorem \ref{thm:LongBijRankFunc} (see Remark \ref{IsolatedRemark}). The remaining bijections follow from Theorem \ref{thm:ZieglerSpecLocallyCohCat} and the fact that $\Zg (\Mod \cC)$ is discrete in the locally finite case (see Theorem~\ref{thm:lastPreliminaries} or Lemma \ref{lem:IsolationLocFin}). 
\end{proof}
\noindent Note that if $\cC$ is not only locally finite but also idempotent complete, the points of the Ziegler spectrum correspond to indecomposable representable functors (see \cite[Theorem 2.1]{Krause2012}), so they are parameterised by the isoclasses of indecomposable objects in $\cC$.

A triangulated category $\cc$ is called \emphbf{periodic} if $\Sigma^d\cong \id_{\cc}$ for some $d\neq 0$. Periodic triangulated categories include homotopy categories of matrix factorisations, periodic derived categories, and certain cluster categories. 
For periodic triangulated categories the $\Sigma$-orbits of endofinite cohomological functors have finite cardinality, so the coproduct of objects appearing in a $\Sigma$-orbit of an indecomposable endofinite cohomological functor remains endofinite. Using that, the statement of Theorem \ref{thm:LongBijRankFunc} can be simplified as well. In particular, we get the following corollary:

\begin{corollary} \label{cor:periodic}
If the triangulated category $\cC$ is periodic, there is a bijective correspondence between:
\begin{enumerate}[(a)]
	\item irreducible rank functions on $\cC$;
	\item $\Sigma$-orbits of indecomposable endofinite cohomological functors in $\Mod \cC$.
\end{enumerate}
\end{corollary}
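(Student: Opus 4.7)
The plan is to invoke the restriction of Theorem \ref{thm:LongBijRankFunc} spelled out in the paragraph immediately following it, which matches irreducible rank functions on $\cC$ bijectively with isoclasses of $\Sigma$-indecomposable $\Sigma$-invariant endofinite cohomological functors in $\Mod\cC$. It then suffices to produce, under the periodicity hypothesis $\Sigma^d \cong \id_{\cC}$, a bijection between these isoclasses and the set of $\Sigma$-orbits of indecomposable endofinite cohomological functors in $\Mod\cC$.

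For the forward direction, I would send an indecomposable endofinite cohomological functor $H$ to $\widetilde{H} := \coprod_{[H'] \in O_H} H'$, where $O_H$ denotes the $\Sigma$-orbit of $H$. By periodicity, $|O_H| \leq d$, so $\widetilde{H}$ is a finite coproduct of endofinite cohomological functors; it is thus itself endofinite (closure of finite endolength under finite coproducts, Section \ref{subsection lcc}) and cohomological (closure of fp-injectives under coproducts combined with Lemma \ref{lem:cohomologicalFpinjectiveFlat}), and it is $\Sigma$-invariant by construction. The assignment $[H] \mapsto [\widetilde{H}]$ clearly depends only on $O_H$, not on the chosen representative.

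For the inverse, given a $\Sigma$-indecomposable $\Sigma$-invariant endofinite cohomological functor $F$, I would decompose $F \cong \coprod_{i \in I} H_i^{\oplus m_i}$ into pairwise non-isomorphic indecomposables with local endomorphism rings via Krull--Remak--Schmidt--Azumaya, which applies because $F$ is endofinite and therefore $\varSigma$-pure-injective. Each $H_i$ is endofinite (as a direct factor of $F$) and cohomological (as a direct factor of an fp-injective functor). The isomorphism $\Sigma^* F \cong F$ and uniqueness of KRS force $\Sigma^*$ to permute the isoclasses $[H_i]$ preserving multiplicities, so partitioning $I$ into $\Sigma$-orbits yields
\[
F \;\cong\; \coprod_{j \in J}\Bigl(\coprod_{[H'] \in O_j} H'\Bigr)^{\oplus m_j}.
\]
Each inner coproduct is finite by periodicity, and hence each factor $\coprod_{[H'] \in O_j} H'$ is a $\Sigma$-invariant endofinite cohomological functor. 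The $\Sigma$-indecomposability of $F$ therefore forces $|J| = 1$ and $m_1 = 1$, exhibiting $F$ as the coproduct over a unique $\Sigma$-orbit of an indecomposable endofinite cohomological functor; this gives a two-sided inverse to the assignment of the previous paragraph.

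The step I expect to be most delicate is precisely the one that uses periodicity: ensuring that the orbit coproduct $\widetilde{H}$ remains endofinite. Without finiteness of the orbit this fails, since an infinite coproduct of non-zero cohomological functors is never endofinite (the Hom from any representable object whose image in some orbit member is non-zero would acquire infinite length over the endomorphism ring of the coproduct). Once finiteness of $\Sigma$-orbits is in hand, all remaining verifications are routine bookkeeping with Krull--Remak--Schmidt--Azumaya uniqueness.
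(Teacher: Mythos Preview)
Your proposal is correct and follows essentially the same route as the paper. The paper does not give a formal proof of this corollary; its argument is contained in the discussion after Theorem~\ref{thm:LongBijRankFunc} (irreducible rank functions correspond to $\Sigma$-indecomposable $\Sigma$-invariant endofinite cohomological functors, which in turn correspond to $\Sigma$-orbits of indecomposable endofinite points in $\Zg(\Mod\cC)$ whose orbit-coproduct remains endofinite) together with the sentence immediately preceding the corollary (periodicity makes orbits finite, so the endofiniteness condition is automatic). You carry out the same reduction directly via Krull--Remak--Schmidt--Azumaya rather than routing through part~(e) of Theorem~\ref{thm:LongBijRankFunc}, but the content is identical.

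One small remark: your closing parenthetical assertion that an infinite coproduct of non-zero cohomological functors is \emph{never} endofinite is too strong---for instance, in a locally finite $\cC$ every cohomological functor in $\Mod\cC$ is endofinite by Lemma~\ref{lem:injFinEndolength}, including arbitrary coproducts of indecomposable injectives. This does not affect your proof, which only uses the (correct) implication that a \emph{finite} orbit-coproduct of endofinite functors is endofinite; the point is simply that without periodicity the corollary would require the extra hypothesis that the orbit-coproduct is endofinite, not that it always fails.
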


\subsection{Exact and localising rank functions}

The main goal of this subsection is to extend the bijection in \cite[Theorem 6.4]{Chuang2021}. Before recalling that result, a few definitions are needed.

Given a triangulated functor $\Phi:\cc \to \cE$ between skeletally small triangulated categories and a rank function $\rho$ on $\cE$, one may define a new rank function $\rho^\Phi$ on $\cc$ via $\rho^\Phi(f)=\rho (\Phi (f))$. On the level of the module category, this corresponds, via Theorem \ref{thm:TransInvariantAddFunctions<->rankFunctions}, to the ($\Sigma$-invariant) additive function ${\widetilde{\rho}}^{\Phi^*}$. 

\begin{definition}[{\cite[\S 2, Definition 6.2]{Chuang2021}}] A rank function $\rho$ on $\cc$ is:
\begin{enumerate}
	\item \emphbf{localising} if it is integral and $\rho(f)=0$ implies that the morphism $f$ factors through an object $K$ satisfying $\rho(\id_K)=\rho_{ob}(K)=0$;
	\item \emphbf{prime} if it is integral and $\cc$ has a generator $X$ such that $\rho_{ob}(X)=1$.
\end{enumerate} 
\end{definition}

\noindent Here, by a \emphbf{generator} of $\cc$ we mean an object $X$ such that $\cC$ coincides with its smallest thick subcategory $\langle X \rangle$ containing $X$. A triangulated category is said to be \emphbf{simple} if it has an indecomposable generator and every triangle is a (finite) direct sum of split triangles. By \cite[Corollary 2.21]{Chuang2021}, such categories admit a unique prime rank function.

For a rank function $\rho$ on $\cC$ one can define its \emphbf{kernel on objects} 
\[
\Ker \rho_{ob} \coloneqq\{X \text{ in }\cC \mid \rho_{ob}(X) = 0\}.
\]
The subcategory $\Ker \rho_{ob}$ is clearly a thick subcategory of $\cC$ by the definition of a rank function. Rank functions with trivial kernel on objects are called \emphbf{object-faithful}. Every rank function $\rho$ descends to an object-faithful rank function $\overline{\rho}$ on $\cC/\Ker \rho_{ob}$ satisfying ${\overline{\rho}}^\kappa=\rho$, where $\kappa:\cc \to \cc/\Ker \rho_{ob}$ is the Verdier localisation functor (see \cite[Corollary 4.4]{Chuang2021}).  

\begin{theorem}[{\cite[Theorem 6.4]{Chuang2021}}]
	\label{thm:localising_CL}
Let $\cc$ be a skeletally small triangulated category with a generator. The correspondence $\rho \mapsto \Ker \rho_{ob}$ induces a bijection between:
\begin{enumerate}[(a)]
    \item localising prime rank functions on $\cc$;
 \item thick subcategories $\CK$ of $\cc$ with simple Verdier quotient $\cc/\CK$.
 \end{enumerate}
\end{theorem}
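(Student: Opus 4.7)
The plan is to reduce the theorem to a characterisation of object-faithful rank functions via the Verdier quotient, and then to identify object-faithful, localising, prime rank functions as exactly those coming from a simple triangulated structure. By \cite[Corollary 4.4]{Chuang2021}, every rank function $\rho$ on $\cc$ descends through $\kappa: \cc \to \cc/\Ker\rho_{ob}$ to a unique object-faithful rank function $\overline{\rho}$ satisfying $\overline{\rho}^\kappa = \rho$. The first step will be to verify that $\rho$ is localising and prime if and only if $\overline{\rho}$ is: for the localising property, one uses that a morphism $f$ in $\cc$ factors through $\Ker\rho_{ob}$ iff $\kappa f = 0$; for primeness, one uses that $\cc$ is assumed to have a generator, whose image under $\kappa$ generates $\cc/\Ker\rho_{ob}$ and has the same rank. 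This reduces the theorem to the assertion: a skeletally small triangulated category $\cE$ (with a generator) admits an object-faithful, localising, prime rank function if and only if $\cE$ is simple, in which case the rank function is unique.

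The ``if'' direction of this assertion is the easier one: in a simple $\cE$ with indecomposable generator $S$, every object is a direct sum of shifts of $S$ and every triangle is a direct sum of split triangles, so setting $\rho_{ob}(Y)$ equal to the number of summands in such a decomposition yields an object-faithful, $\Sigma$-invariant, localising, prime rank function, with uniqueness given by \cite[Corollary 2.21]{Chuang2021}.

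For the ``only if'' direction, assume $\rho$ is object-faithful, localising, and prime on $\cE$, with generator $X$ of rank $1$. Object-faithfulness combined with additivity of $\rho_{ob}$ immediately forces $X$ to be indecomposable. The key next step is to show that every nonzero morphism $\phi: \Sigma^i X \to \Sigma^j X$ is an isomorphism: by Remark \ref{rmk:inequalities}, $\rho(\phi) \leq \rho_{ob}(X) = 1$; the value $0$ is ruled out because $\rho$ is localising and object-faithful (which together force $\phi = 0$), while the value $1$ combined with the rank-nullity axiom applied to a triangle containing $\phi$ forces the cofibre to have rank $0$, hence to vanish. In particular, $\End_\cE(X)$ is a division ring. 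I then plan to induct on the thick-closure construction of $\cE = \langle X\rangle$ to show every object is a finite direct sum of shifts of $X$: any morphism between two such direct sums decomposes into blocks indexed by the $\Sigma$-shift of source and target summands, with entries in $\End_\cE(X)$, and Gaussian elimination over this division ring puts each block in normal form via invertible changes of basis, making the cone of any such morphism again a direct sum of shifts of $X$. The same analysis forces every triangle in $\cE$ to be a direct sum of split triangles, so $\cE$ is simple.

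With the characterisation in hand, the stated bijection follows at once: $\rho \mapsto \Ker\rho_{ob}$ has inverse $\CK \mapsto \tau^\kappa$, where $\tau$ is the unique prime rank function on the simple quotient $\cc/\CK$. The main obstacle I foresee lies in the inductive step of the ``only if'' direction: tracking the $\Sigma$-grading through Gaussian elimination in a possibly non-commutative division ring, and confirming that cones of arbitrary morphisms between direct sums of shifts of $X$ remain such direct sums, is the most delicate part of the argument.
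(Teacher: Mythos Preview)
The paper does not give its own proof of this statement: it is quoted verbatim from \cite[Theorem~6.4]{Chuang2021} as motivation. What the paper does prove is the generalisation in Theorem~\ref{thm: FDT_generalised}, where ``prime'' is relaxed to ``basic'' and ``simple'' to ``locally finite'', and that proof is purely functorial: rank functions are lifted to additive functions on $\mod\cc$ via Theorem~\ref{thm:TransInvariantAddFunctions<->rankFunctions}, matched with Serre subcategories and CE-quotient functors through Krause's machinery (Theorem~\ref{thm:consequence_coh_loc}), and the localising condition is identified with $\Ker\kappa^*|_{\mod\cc}=\Ker\widetilde{\rho}$ (Proposition~\ref{prop: kernels}). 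No direct analysis of the quotient category takes place.

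Your route is the elementary, internal one---essentially the original Chuang--Lazarev argument---and it is sound. The reduction to the object-faithful case and the claim that every nonzero map between shifts of the rank-one generator is an isomorphism are correct; the Gaussian elimination you flag does go through, because each nonzero matrix entry is invertible and can be used as a pivot to clear its row and column by elementary automorphisms of source and target. The two approaches trade off cleanly: yours is self-contained and exposes the structure of simple categories, while the paper's functorial method needs more background but extends immediately to basic rank functions, locally finite quotients, and the non-localising exact case.

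One point to tighten: for the inverse map $\CK\mapsto\tau^\kappa$ to land in \emph{prime} rank functions you must exhibit a generator of $\cc$ of rank one, and the given generator $X$ need not satisfy $\tau^\kappa_{ob}(X)=1$. Choose $Y\in\cc$ with $\kappa Y$ isomorphic to the indecomposable generator of $\cc/\CK$; then the thick subcategory $\langle Y,\CK\rangle$ equals $\cc$ (its image in $\cc/\CK$ is everything, and thick subcategories containing $\CK$ correspond bijectively to thick subcategories of the quotient). Since $X$ is built from $Y$ and finitely many objects $K_1,\dots,K_m\in\CK$, the object $Y\oplus K_1\oplus\cdots\oplus K_m$ is a generator of $\cc$ with $\tau^\kappa$-rank one.
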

\noindent The rank functions appearing in Theorem \ref{thm:localising_CL} are always irreducible:
\begin{lemma}
	Suppose that $\cc$ has a generator. Then every prime rank function on $\cc$ is irreducible.
\end{lemma}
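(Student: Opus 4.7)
The plan is to argue by contradiction. Suppose $\rho$ is prime with generator $X$ satisfying $\rho_{ob}(X) = 1$, and suppose toward a contradiction that $\rho = \rho_1 + \rho_2$ with $\rho_1, \rho_2$ non-zero integral rank functions on $\cC$. Evaluating at $X$ gives $(\rho_1)_{ob}(X) + (\rho_2)_{ob}(X) = 1$, and since both summands are non-negative integers, one of them vanishes at $X$; say $(\rho_1)_{ob}(X) = 0$.

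The key idea is now to show that $\Ker (\rho_1)_{ob}$ is a thick subcategory of $\cC$. This is mentioned in the paper (following the statement of Theorem \ref{thm:localising_CL}) as a direct consequence of the rank function axioms, but I would verify it explicitly: closure under direct summands and translations is immediate from (O2) and (O4), while closure under extensions in triangles follows from the triangle inequality (O3), applied to each of the three rotations of a triangle $A \to B \to C \to \Sigma A$ in turn (if two of the objects lie in $\Ker (\rho_1)_{ob}$, then (O3) bounds the third by zero). Since $X$ generates $\cC$ and $X \in \Ker (\rho_1)_{ob}$, we conclude $\Ker (\rho_1)_{ob} = \cC$, i.e.~$(\rho_1)_{ob} \equiv 0$.

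Finally, I would deduce that $\rho_1 = 0$ on morphisms as well, contradicting the assumption that $\rho_1$ is non-zero. For any morphism $f \colon Y \to Z$, Remark \ref{rmk:inequalities} (or equivalently the rank-nullity axiom (M3) combined with non-negativity) yields $\rho_1(f) \leq (\rho_1)_{ob}(Z) = 0$, so $\rho_1(f) = 0$. This completes the argument.

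The only step requiring any care is the verification that $\Ker (\rho_1)_{ob}$ is thick, but this is essentially routine from the axioms; no real obstacle is expected, as the proof is short and relies only on standard manipulations of the rank-function axioms together with the definition of a generator.
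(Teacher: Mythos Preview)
Your proof is correct and follows essentially the same approach as the paper: evaluate at the generator $X$, use non-negativity and integrality to force one summand to vanish on $X$, then use that $\Ker(\rho_1)_{ob}$ is a thick subcategory containing the generator to conclude $\rho_1=0$. The paper's version is simply terser, citing the thickness of $\Ker\rho_{ob}$ (stated just \emph{before} Theorem~\ref{thm:localising_CL}, not after) without reproving it, and omitting the explicit passage from $(\rho_1)_{ob}\equiv 0$ to $\rho_1\equiv 0$.
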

\begin{proof}
	Let $X$ be a generator of $\cc$ and let $\rho$ be integral with $\rho(\id_X)=1$. Write $\rho=\sigma+\tau$ with $\sigma$ and $\tau$ integral rank functions. Since $\rho(\id_X)=1$, then either ${\sigma}(\id_X)=0$ or ${\tau}(\id_X)=0$. If ${\sigma}(\id_X)=0$, then $\langle X \rangle=\cC$ is contained in the thick subcategory $\Ker {\sigma}_{ob}$. Hence $\sigma=0$.
\end{proof}

We shall extend the bijection from Theorem \ref{thm:localising_CL} to all localising basic rank functions on arbitrary skeletally small triangulated categories. In fact, this extended bijection will follow as a corollary of a more general correspondence. To state and prove this correspondence more background is needed.
The \emphbf{annihilator} $\Ann \pi$ of an additive functor $\pi:\cc \to \cE$ is the class of all morphisms in $\cc$ that are mapped to zero by $\pi$. Note that $\Ann \pi$ is an additive ideal and it is $\Sigma$-closed whenever $\pi$ is a triangulated functor. Using annihilators one can define various types of ideals in triangulated categories. Following \cite{Krause2000}, we say that an ideal in $\cC$ is \emphbf{cohomological} if it has the form $\Ann F$ for some cohomological functor $F: \cC \to \cA$ to an abelian category $\cA$. By \emphbf{exact} we mean any ideal of the form $\Ann \pi$ for a cohomological quotient functor $\pi:\cC \to \cE$ (see the definition below).

\begin{definition}[{\cite[Definitions 4.1, 7.1, 9.1]{Krause2005}}] A triangulated functor $\pi:\cc \to \cE$ between skeletally small triangulated categories is:
	\begin{enumerate}
		\item a \emphbf{cohomological quotient functor} if for every abelian category $\ca$ and every cohomological functor $H:\cc \to \ca$ satisfying $\Ann \pi \subseteq \Ann H$, there exists, up to a unique isomorphism, a unique cohomological functor $H': \cE \to \ca$ such that $H=H' \circ \pi$;
		\item an \emphbf{exact quotient functor} if for every triangulated category $\ce'$ and every triangulated functor $G:\cc \to \ce'$ satisfying $\Ann \pi \subseteq \Ann G$, there exists, up to a unique isomorphism, a unique triangulated functor $G': \cE \to \cE'$ such that $G=G' \circ \pi$;
		\item a \emphbf{CE-quotient functor} if it is both a cohomological quotient functor and an exact quotient functor.
	\end{enumerate}
\end{definition}

 On the class of CE-quotient functors starting in $\cc$ one can define an equivalence relation by saying that $\pi_1: \cc \to \cE_1$ and $\pi_2: \cc \to \cE_2$ are  \emphbf{equivalent} if there exists an equivalence $\Phi:\cE_1 \to \cE_2$ such that $\pi_2=\Phi \circ \pi_1$.
Verdier localisations are the standard examples of CE-quotient functors (see \cite[Examples 4.3, 7.2]{Krause2005}). Since equivalent CE-quotient functors have the same kernel, distinct thick subcategories of $\cc$ induce non-equivalent Verdier localisation functors.
The following bijection follows from \cite{Krause2005}:

\begin{theorem}
	\label{thm:consequence_coh_loc}
	Let $\cc$ be a skeletally small triangulated category. The assignment sending the functor $\pi:\cc \to \cE$ to $\Ker \pi^*|_{\mod \cc}$ induces a bijection between:
\begin{enumerate}[(a)]
	\item equivalence classes of CE-quotient functors starting in $\cC$;
	\item $\Sigma$-closed Serre subcategories $\cS$ of $\mod \cc$ for which the Serre localisation $\Mod \cc \to (\Mod \cc)/\overrightarrow{\cS}$ admits an exact right adjoint.
\end{enumerate}
This restricts to a one-to-one correspondence between equivalence classes of CE-quotient functors from $\cC$ to locally finite triangulated categories and $\Sigma$-closed Serre subcategories $\cS$ of $\mod \cc$ for which the Serre localisation $\Mod \cc \to (\Mod \cc)/\overrightarrow{\cS}$ admits an exact right adjoint and $(\Mod \cc)/\overrightarrow{\cS}$ is locally finite.
\end{theorem}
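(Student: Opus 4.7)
The plan is to invoke the main results of \cite{Krause2005} and verify that they translate to the statement at hand. For any triangulated functor $\pi: \cc \to \cE$, the induction $\pi^*: \Mod\cc \to \Mod\cE$ is exact, commutes with translations, and preserves finitely presented objects. Consequently, $\Ker \pi^*|_{\mod\cc}$ is automatically a $\Sigma$-closed Serre subcategory of $\mod\cc$, and the restriction of $\pi^*$ to $\mod\cc$ factors through the Serre quotient $(\mod\cc)/\cS$ for $\cS = \Ker\pi^*|_{\mod\cc}$, landing in $\mod\cE$.

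The central input from \cite{Krause2005} will be the characterisation of CE-quotient functors $\pi:\cc \to \cE$ via universal properties that, on the module-category side, identify $\pi^*: \Mod\cc \to \Mod\cE$ with the Serre localisation $\Mod\cc \to (\Mod\cc)/\overrightarrow{\cS}$. Under this identification, the restriction functor $\pi_*$ corresponds to the fully faithful right adjoint of the Serre localisation, and the fact that $\pi_*$ is exact (noted in the preliminaries) matches the exactness requirement on the right adjoint in item (b). Since $\cS$ determines $(\Mod\cc)/\overrightarrow{\cS} \simeq \Mod\cE$, and hence $\cE$ itself up to triangulated equivalence, distinct equivalence classes of CE-quotients give distinct Serre subcategories, so the assignment from (a) to (b) is well-defined and injective.

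For surjectivity, given a $\Sigma$-closed Serre subcategory $\cS$ of $\mod\cc$ for which the Serre localisation $Q:\Mod\cc \to (\Mod\cc)/\overrightarrow{\cS}$ admits an exact right adjoint $R$, one constructs $\cE$ as the full subcategory of $(\Mod\cc)/\overrightarrow{\cS}$ consisting of the objects $Q(\Hom_\cc(-,X))$ for $X$ in $\cc$, endowed with the translation $\overline{\Sigma^*}$ descending from $\Sigma^*$ on $\Mod\cc$ (well-defined because $\cS$, and hence $\overrightarrow{\cS}$, is $\Sigma$-closed). The existence of a triangulated structure on $\cE$ and the verification that the canonical functor $\pi:\cc \to \cE$ is a CE-quotient with $\Ker\pi^*|_{\mod\cc}=\cS$ is the construction carried out in \cite{Krause2005}; crucially, the exactness of $R$ is what ensures that candidate triangles in $\cE$, obtained from triangles in $\cc$, really are triangles.

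For the restricted bijection we use the identification $\mod\cE \simeq (\mod\cc)/\cS$ induced by $\pi^*|_{\mod\cc}$. By the correspondence between skeletally small abelian categories and locally coherent categories recalled in the preliminaries, together with Corollary \ref{cor:charac_loc_finite_abelian}, $\cE$ is locally finite, i.e.\ $\mod\cE$ is a length category, if and only if $(\Mod\cc)/\overrightarrow{\cS}$ is a locally finite locally coherent category. The main obstacle lies not in our dictionary but in the original construction and verification of CE-quotient functors from abstract data, which is the technical heart of \cite{Krause2005}; our task reduces to matching Krause's notions with the Serre-subcategory language used here.
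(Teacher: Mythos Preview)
Your proposal is correct and follows essentially the same approach as the paper: both defer the bijection between (a) and (b) to \cite{Krause2005} (the paper cites Lemma~8.4, Proposition~8.8 and Theorem~9.2 specifically), and then obtain the locally finite restriction from the identification $\mod\cE \simeq (\mod\cC)/\cS_\pi \simeq \fp((\Mod\cC)/\overrightarrow{\cS_\pi})$ via \cite[Theorem~4.4]{Krause2005}. Your narrative sketch of what Krause's construction provides is accurate, though the paper's proof is terser and simply points to the relevant numbered results.
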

\begin{proof}
By \cite[Theorem 9.2]{Krause2005}, the assignment sending the functor $\pi:\cc \to \cE$ to $\Ann \pi$ induces a bijection between the set of equivalence classes of CE-quotient functors starting in $\cC$ and the set of exact ideals of $\cC$. By \cite[Lemma 8.4]{Krause2005}, cohomological ideals are in bijection with Serre subcategories of $\mod \cc$ via the assignment sending an ideal $I$ in $\cC$ to the subcategory  $\Imm(I)\coloneqq \{F\text{ in }\mod \cc \mid F\cong \Imm \Hom_\cC(-,f) \text{ for some } f\in I\}$. By \cite[Proposition 8.8]{Krause2005}, for a $\Sigma$-closed ideal $I$, being exact is equivalent to the fact that $\Mod \cc \to (\Mod \cc)/\overrightarrow{\Imm(I)}$ admits an exact right adjoint. Combining these results with the fact that $\Imm (\Ann \pi)=\Ker \pi^*|_{\mod \cc}$, we obtain the bijection between (a) and (b) with the desired assignment. 
	 
  Let us check how this bijection restricts in the locally finite case.
	 Let $\pi:\cc \to \cE$ be a CE-quotient functor. For convenience, write $\cS_\pi$ instead of $\Ker \pi^*|_{\mod \cc}$. According to \cite[Theorem 4.4]{Krause2005}, the functor $\pi^*|_{\mod \cc}: \mod\cc \to \mod \cE$ can be identified with the Serre localisation $\mod \cc \to (\mod \cc)/\cS_\pi$. The localisation $\mod \cc \to (\mod \cc)/\cS_\pi$ corresponds to the restriction $\overline{\pi}|_{\mod \cc}: \mod \cc \to \fp((\Mod \cc)/\overrightarrow{\cS_\pi})$ of the Serre localisation $\overline{\pi}:\Mod \cc \to (\Mod \cc)/\overrightarrow{\cS_\pi}$. 
	 In particular, $\fp((\Mod \cc)/\overrightarrow{\cS_\pi})\simeq \mod \cE$, so $(\Mod \cc)/\overrightarrow{\cS_\pi}$ is locally finite exactly when $\cE$ is a locally finite triangulated category. 
\end{proof}

Given a rank function $\rho$, denote the ideal $\{f\text{ in }\cc \mid \rho(f)=0\}$ by $\Ker \rho$ and call this the \emphbf{kernel on morphisms} of $\rho$. By \cite[Remark 2.7, Lemma 4.6]{Chuang2021}, $\Ker \rho$ is an additive ($\Sigma$-closed) ideal (see also Remark \ref{rmk:inequalities}). In fact, this ideal is cohomological.

\begin{lemma}
\label{lem:kernel-cohom_ideal}
Let $\rho$ be a rank function on $\cC$. Then $\Ker \rho$ is a $\Sigma$-closed cohomological ideal in $\cC$.
\end{lemma}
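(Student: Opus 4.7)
The assertion that $\Ker\rho$ is a $\Sigma$-closed additive ideal is already on record from \cite[Remark 2.7, Lemma 4.6]{Chuang2021} (and can be read off from axioms (M2), (M4); compare Remark \ref{rmk:inequalities}). So the substantive content is to exhibit $\Ker\rho$ as the annihilator of some cohomological functor $F\colon\cc\to\cA$ into an abelian category.

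My plan is to produce such an $F$ by composing the Yoneda embedding with the Serre localisation built from the Serre subcategory $\Ker\widetilde\rho$ of $\mod\cc$. Concretely, let $\widetilde\rho$ be the $\Sigma$-invariant additive function on $\mod\cc$ corresponding to $\rho$ via Theorem \ref{thm:TransInvariantAddFunctions<->rankFunctions}, set $\cS\coloneqq\Ker\widetilde\rho$, and let $\overrightarrow{\cS}$ denote the associated hereditary torsion subcategory of $\Mod\cc$ of finite type, with Serre localisation functor $E\colon\Mod\cc\to(\Mod\cc)/\overrightarrow{\cS}$ as in \eqref{eq:loc}. The candidate is
\[
F\;\coloneqq\;E\circ Y\;\colon\;\cc\longrightarrow(\Mod\cc)/\overrightarrow{\cS},\qquad Y(X)=\Hom_\cc(-,X),
\]
which is cohomological because the Yoneda embedding sends triangles to exact sequences in $\Mod\cc$ and $E$ is exact between abelian categories.

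To finish, I would verify $\Ann F=\Ker\rho$. For a morphism $f$ in $\cc$, exactness of $E$ gives $\Imm F(f)=E\bigl(\Imm\Hom_\cc(-,f)\bigr)$, so $F(f)=0$ if and only if this image vanishes. Using $\Ker E=\overrightarrow{\cS}$ together with the identification $\overrightarrow{\cS}\cap\mod\cc=\cS$ (an instance of the bijection in Theorem \ref{thm:ZieglerSpecLocallyCohCat}), this is equivalent to $\Imm\Hom_\cc(-,f)\in\cS=\Ker\widetilde\rho$, and hence to $\rho(f)=\widetilde\rho(\Imm\Hom_\cc(-,f))=0$.

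I do not anticipate any genuine obstacle: all the ingredients---the passage from $\rho$ to $\widetilde\rho$, the theory of hereditary torsion subcategories of finite type, and the cohomological character of the Yoneda embedding composed with an exact localisation---are already in place. The one point requiring minor care is the identification $\overrightarrow{\cS}\cap\mod\cc=\cS$, which is precisely what the correspondence of Theorem \ref{thm:ZieglerSpecLocallyCohCat} delivers.
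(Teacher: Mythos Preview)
Your proof is correct and follows essentially the same approach as the paper: exhibit $\Ker\rho$ as the annihilator of the composite of the Yoneda embedding with the Serre localisation killing $\Ker\widetilde\rho$. The paper works directly in $\mod\cC$, taking $F\colon\cC\to\mod\cC\to(\mod\cC)/\Ker\widetilde\rho$, which is marginally simpler than your route through $\Mod\cC$ since it avoids invoking the identification $\overrightarrow{\cS}\cap\mod\cC=\cS$.
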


\begin{proof}
By the bijection in Theorem \ref{thm:TransInvariantAddFunctions<->rankFunctions}, to any rank function $\rho$ we can assign a $\Sigma$-invariant additive function $\widetilde{\rho}$ on $\mod \cC$. The functor $F$ 
\[
F: \cC \to  \mod \cC \to (\mod \cC) / \Ker \widetilde{\rho}
\]
given by the composition of the Yoneda embedding with the Serre localisation functor is cohomological. We get that $\Ker \rho = \Ann F$, so $\Ker \rho$ is a $\Sigma$-closed cohomological ideal.
\end{proof}
\begin{definition}
\label{definition_exact}
	A rank function $\rho$ on $\cc$ is:
	\begin{enumerate}
		\item \emphbf{exact} if it is integral and the Serre localisation functor $\Mod \cc \to (\Mod \cc)/\overrightarrow{\Ker\widetilde{\rho}}$ has an exact right adjoint;
		\item \emphbf{idempotent} if it is integral and the ideal $\Ker \rho$ is idempotent, meaning that every $f$ in $\Ker \rho$ is of the form $g\circ h$ for some $g$ and $h$ in $\Ker \rho$.
	\end{enumerate} 
\end{definition} 

Note that we could have defined exact rank functions as integral rank functions $\rho$ whose kernel on morphisms $\Ker \rho$ is an exact ideal. The equivalence of these two definitions follows from \cite[Proposition 8.8]{Krause2005}. Note that each exact ideal is cohomological (\cite[\S 2]{Krause2005}). The notion of an idempotent rank function will play a more prominent role in the next section, where we restrict ourselves to the context of compact objects in a compactly generated triangulated category. In this setting, exact and idempotent rank functions coincide. In general, we can observe that every localising rank function is both exact and idempotent:

\begin{proposition} \label{prop: kernels}
Let $\rho$ be an integral rank function on $\cC$ and let $\widetilde{\rho}$ be its lift to $\mod \cC$. Denote by $\kappa$ the Verdier localisation $\kappa: \cC \to \cC/\Ker\rho_{ob}$. Then we have:
\begin{enumerate}
    \item  $\rho$ is localising if and only if $\Ker\kappa^*|_{\mod \cc}  = \Ker\widetilde{\rho}$.
    \item if $\rho$ is localising, it is both exact and idempotent.
\end{enumerate}
\end{proposition}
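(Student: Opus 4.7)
For (1), the plan is to describe $\Ker\kappa^*|_{\mod\cc}$ and $\Ker\widetilde\rho$ as conditions on a morphism $f$ in $\cC$ and then compare them. Every $F$ in $\mod\cc$ admits a presentation $F\cong\Imm\Hom_\cC(-,f)$, and since $\kappa^*$ is exact and sends $\Hom_\cC(-,X)$ to $\Hom_{\cC/\Ker\rho_{ob}}(-,\kappa(X))$, we obtain
\[
\kappa^*(F)\cong\Imm\Hom_{\cC/\Ker\rho_{ob}}(-,\kappa(f)),
\]
which vanishes exactly when $\kappa(f)=0$. By the universal property of the Verdier quotient, this in turn happens if and only if $f$ factors through some object of $\Ker\rho_{ob}$. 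On the other hand, $F\in\Ker\widetilde\rho$ means precisely $\rho(f)=0$. The inclusion $\Ker\kappa^*|_{\mod\cc}\subseteq\Ker\widetilde\rho$ then holds unconditionally: if $f=g\circ h$ factors through $K\in\Ker\rho_{ob}$, the inequality $\rho(g\circ h)\leq\rho(\id_K)$ from Remark \ref{rmk:inequalities} gives $\rho(f)\leq\rho_{ob}(K)=0$. The reverse inclusion is a verbatim restatement of the localising condition, yielding the desired equivalence.

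For (2), assume $\rho$ is localising, so that (1) gives $\Ker\kappa^*|_{\mod\cc}=\Ker\widetilde\rho$. Since Verdier localisations are CE-quotient functors, Theorem \ref{thm:consequence_coh_loc} applies to $\kappa$ and implies that the $\Sigma$-closed Serre subcategory $\Ker\kappa^*|_{\mod\cc}$ corresponds to a Serre localisation $\Mod\cC\to(\Mod\cC)/\overrightarrow{\Ker\kappa^*|_{\mod\cc}}$ admitting an exact right adjoint. Substituting via (1), the same is true of the Serre localisation $\Mod\cC\to(\Mod\cC)/\overrightarrow{\Ker\widetilde\rho}$, which is precisely the exactness condition in Definition \ref{definition_exact}.

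To show that $\Ker\rho$ is idempotent, take any $f\in\Ker\rho$ and use the localising property to write $f=g\circ h$ with $h\colon X\to K$, $g\colon K\to Y$ and $\rho_{ob}(K)=0$. Applying the inequalities of Remark \ref{rmk:inequalities} once more, $\rho(g)=\rho(g\circ\id_K)\leq\rho(\id_K)=0$ and similarly $\rho(h)\leq\rho(\id_K)=0$, so both $g$ and $h$ lie in $\Ker\rho$. Overall, the argument is largely mechanical once the functorial correspondence of Theorem \ref{thm:consequence_coh_loc} and the factorisation inequalities of Remark \ref{rmk:inequalities} are in place; the only step requiring a moment of care is the identification $\kappa^*(\Imm\Hom_\cC(-,f))\cong\Imm\Hom_{\cC/\Ker\rho_{ob}}(-,\kappa(f))$, which rests on the exactness of $\kappa^*$ and is then combined with the standard universal property of the Verdier quotient to translate vanishing of images into factorisation through $\Ker\rho_{ob}$.
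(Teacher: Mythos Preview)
Your proof is correct and follows essentially the same approach as the paper's own proof. The only minor stylistic difference is that, where the paper invokes the induced rank function $\overline{\rho}$ on $\cC/\Ker\rho_{ob}$ (via $\rho=\overline{\rho}^{\kappa}$) to argue that morphisms factoring through $\Ker\rho_{ob}$ have rank zero, you instead use the composition inequality $\rho(g\circ h)\leq\rho(\id_K)$ from Remark~\ref{rmk:inequalities}; both routes are equally valid and lead to the same conclusions.
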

\begin{proof}
Let $\rho$ be a rank function on $\cc$. On the one hand, a morphism $f$ factors through an object in $\Ker\rho_{ob}$ if and only if $f$ belongs to $\Ann \kappa$, and this happens if and only if $\Imm\Hom_{\cc}(-,f)$ is in $\Ker \kappa^*|_{\mod \cc}$. On the other hand, $f$ lies in $\Ker \rho$ if and only if $\Imm\Hom_{\cc}(-,f)$ is in $\Ker \widetilde{\rho}$. Finally, if $f$ factors through an object in $\Ker\rho_{ob}$, then $\rho(f)=\overline{\rho}^\kappa(f)=0$, so $f$ belongs to $\Ker \rho$. This proves that $\Ker \kappa^*|_{\mod \cc} \subseteq \Ker\widetilde{\rho}$. Part (1) then follows from  the definition of localising rank function.

To prove part (2), assume that $\rho$ is localising. By applying Theorem \ref{thm:consequence_coh_loc} to the CE-quotient functor $\kappa:\cc \to \cc/ \Ker\rho_{ob}$ and using part (1), we get that $\rho$ is exact. To prove that $\rho$ is idempotent, consider $f:X\to Y$ in $\Ker \rho$. There exist $h:X\to K$ and $g:K\to Y$ with $K$ in $\Ker\rho_{ob}$ and $f=g\circ h$. Since $\rho(g)=\overline{\rho}^{\kappa}(g)=0$ and $\rho(h)=\overline{\rho}^{\kappa}(h)=0$, the ideal $\Ker \rho$ is idempotent. 
\end{proof}

Suppose that $\cc$ is a locally finite triangulated category. Denote the composition length on $\mod \cc$ by $\widetilde{\ell_\cc}$. This yields a $\Sigma$-invariant additive function on $\mod \cc$, since the length of an object is preserved by $\Sigma^*$, as $\Sigma^*$ is an equivalence. Consequently, there exists a canonical rank function on $\cc$, denoted by $\ell_\cc$, which corresponds to $\widetilde{\ell_\cc}$ via the bijection in Theorem \ref{thm:TransInvariantAddFunctions<->rankFunctions}. In fact, since $\widetilde{\ell_\cc}$ is basic, so is $\ell_\cc$. For example, it is easy to check that a simple triangulated category $\cC$ is locally finite (see \cite[Proposition 2.3]{Krause2012}), and in that case, the map $\ell_\cc$ coincides with the unique prime rank function on $\cc$. For an arbitrary locally finite triangulated category $\cC$, we have $\ell_\cc(f)=0$ if and only if $\Imm\Hom_{\cc}(-,f)=0$, and this holds exactly when $f=0$. In other words, $\Ker \ell_\cc=0$. We call rank functions with trivial kernel on morphisms \emphbf{morphism-faithful}. Locally finite triangulated categories are characterised by the existence of an integral morphism-faithful rank function.

\begin{proposition}
\label{prop:charac_loc_fin_tria}
A skeletally small triangulated category $\cc$ is locally finite if and only if it admits a morphism-faithful basic rank function. In this case, such function is unique and is given by $\ell_\cc$.
\end{proposition}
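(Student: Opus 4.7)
The plan is to reduce the statement to its abelian counterpart, Corollary \ref{cor:charac_loc_finite_abelian}, via the translation bijection of Theorem \ref{thm:TransInvariantAddFunctions<->rankFunctions}. Throughout, given a rank function $\rho$ on $\cC$, I write $\widetilde{\rho}$ for the corresponding $\Sigma$-invariant additive function on $\mod \cC$.

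For the forward direction, suppose $\cC$ is locally finite. Then $\mod \cC$ is a length category, so by Corollary \ref{cor:charac_loc_finite_abelian} the composition length $\widetilde{\ell_\cC}$ is a basic additive function with trivial kernel. Since $\Sigma^*$ is an equivalence of $\mod \cC$ onto itself, it preserves composition length, so $\widetilde{\ell_\cC}$ is $\Sigma$-invariant. The associated rank function $\ell_\cC$ is therefore basic by Theorem \ref{thm:LongBijRankFunc}, and it is morphism-faithful since, as observed just before the statement, $\ell_\cC(f) = \widetilde{\ell_\cC}(\Imm \Hom_\cC(-,f)) = 0$ forces $\Imm \Hom_\cC(-,f) = 0$ and hence $f = 0$.

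For the backward direction, suppose $\rho$ is a morphism-faithful basic rank function on $\cC$. The key observation is that $\Ker \widetilde{\rho} = 0$: indeed, every $F$ in $\mod \cC$ is of the form $\Imm \Hom_\cC(-,f)$, and $\widetilde{\rho}(F) = \rho(f) = 0$ forces $f = 0$ by morphism-faithfulness, hence $F = 0$. Moreover, since $\rho$ is basic, so is $\widetilde{\rho}$ (again by Theorem \ref{thm:LongBijRankFunc}, or more directly from the compatibility of the bijection in Theorem \ref{thm:TransInvariantAddFunctions<->rankFunctions} with the decomposition into irreducibles in Theorem \ref{thm:decomposition}). Applying Corollary \ref{cor:charac_loc_finite_abelian} to $\cF = \mod \cC$ yields that $\mod \cC$ is a length category, i.e.\ $\cC$ is locally finite.

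For uniqueness, let $\rho$ be any morphism-faithful basic rank function on $\cC$. The argument above shows that $\widetilde{\rho}$ is a basic additive function on $\mod \cC$ with trivial kernel, so the uniqueness clause of Corollary \ref{cor:charac_loc_finite_abelian} gives $\widetilde{\rho} = \widetilde{\ell_\cC}$. By the bijection of Theorem \ref{thm:TransInvariantAddFunctions<->rankFunctions}, we conclude $\rho = \ell_\cC$. There is no substantial obstacle: the whole proof is a transcription via the functor-category dictionary. The one small point to verify carefully is that morphism-faithfulness of $\rho$ really corresponds to triviality of $\Ker \widetilde{\rho}$ on all of $\mod \cC$ (not only on representables), which follows because every finitely presented functor is the image of some $\Hom_\cC(-,f)$.
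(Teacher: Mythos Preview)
Your proof is correct and follows essentially the same approach as the paper: both reduce the statement to Corollary~\ref{cor:charac_loc_finite_abelian} via the dictionary of Theorem~\ref{thm:TransInvariantAddFunctions<->rankFunctions}, using that morphism-faithfulness of $\rho$ is equivalent to $\Ker\widetilde{\rho}=0$ (since every object of $\mod\cC$ has the form $\Imm\Hom_\cC(-,f)$). The paper's version is simply terser, deferring the forward direction to the discussion preceding the proposition.
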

\begin{proof}
	As mentioned previously, if $\cC$ is locally finite,  $\ell_\cc$ is morphism-faithful and basic. Now, the result follows from the construction of $\ell_\cc$ and from Corollary \ref{cor:charac_loc_finite_abelian}, recalling that $\cc$ is locally finite if and only if $\mod \cc$ is a length category.
\end{proof}

We are finally ready to prove the generalisation of Theorem \ref{thm:localising_CL}: 

\begin{theorem}
	\label{thm: FDT_generalised} 
	Let $\cc$ be a skeletally small triangulated category. The correspondence mapping $\pi:\cc \to \cE$ to $\ell_\cE^\pi$ induces a bijection between:
	\begin{enumerate}[(a)]
		\item equivalence classes of CE-quotient functors from $\cC$ to locally finite triangulated categories;
		\item exact basic rank functions on $\cc$.
	\end{enumerate}
Moreover,  the rank function appearing in (b) is localising if and only if the corresponding CE-quotient is a Verdier localisation. Hence, there is a bijection between thick subcategories $\CK$ of $\cC$ with a locally finite Verdier quotient $\cc/\CK$ and localising basic rank functions on $\cc$.
\end{theorem}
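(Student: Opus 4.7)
My plan is to obtain the correspondence by composing the bijection from Theorem \ref{thm:LongBijRankFunc} with the restricted bijection from Theorem \ref{thm:consequence_coh_loc}. Both pass through $\Sigma$-closed Serre subcategories $\cS$ of $\mod\cc$: Theorem \ref{thm:LongBijRankFunc} matches basic rank functions with those $\cS$ for which $(\mod\cc)/\cS$ is a length category, while the restricted version of Theorem \ref{thm:consequence_coh_loc} matches equivalence classes of CE-quotient functors $\pi: \cc \to \cE$ with $\cE$ locally finite with those $\cS$ for which the Serre localisation $\Mod\cc \to (\Mod\cc)/\overrightarrow\cS$ admits an exact right adjoint and $(\Mod\cc)/\overrightarrow\cS$ is locally finite. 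Since $\fp((\Mod\cc)/\overrightarrow\cS) \simeq (\mod\cc)/\cS$, the local finiteness condition on $(\Mod\cc)/\overrightarrow\cS$ translates to the length category condition on $(\mod\cc)/\cS$, while the exactness of the right adjoint is precisely, by Definition \ref{definition_exact}, what it means for the associated basic rank function to be exact. The composition of the two bijections therefore yields the desired correspondence.

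To identify the resulting bijection with the assignment $\pi \mapsto \ell_\cE^\pi$, I would trace a CE-quotient $\pi: \cc \to \cE$ through both steps. By Theorem \ref{thm:consequence_coh_loc}, its attached Serre subcategory is $\cS_\pi = \Ker\pi^*|_{\mod\cc}$. By Remark \ref{pullback}, the basic rank function corresponding to $\cS_\pi$ via Theorem \ref{thm:LongBijRankFunc} lifts to the additive function $\widetilde{\ell}^E$ on $\mod\cc$, where $E: \mod\cc \to (\mod\cc)/\cS_\pi$ is the Serre localisation and $\widetilde{\ell}$ is the composition length. Using the identification of $E$ with $\pi^*|_{\mod\cc}: \mod\cc \to \mod\cE$ established in the proof of Theorem \ref{thm:consequence_coh_loc}, the composition length $\widetilde{\ell}$ transports to the composition length $\widetilde{\ell_\cE}$ on $\mod\cE$, which by construction is the lift of $\ell_\cE$ (see the discussion preceding Proposition \ref{prop:charac_loc_fin_tria}). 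Hence the lift of the resulting rank function on $\cc$ is $\widetilde{\ell_\cE} \circ \pi^*|_{\mod\cc}$, which coincides with the lift of $\ell_\cE^\pi$, so the rank function itself is $\ell_\cE^\pi$.

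For the \emph{moreover} part, suppose first that $\rho$ is localising. Proposition \ref{prop: kernels}(2) then ensures that $\rho$ is exact, and Proposition \ref{prop: kernels}(1) gives $\Ker\widetilde{\rho} = \Ker\kappa^*|_{\mod\cc}$, where $\kappa: \cc \to \cc/\Ker\rho_{ob}$ is the Verdier localisation. Since the bijection above is defined on the level of Serre subcategories, this forces the CE-quotient corresponding to $\rho$ to be equivalent to $\kappa$. Conversely, suppose $\pi$ is equivalent to a Verdier localisation $\cc \to \cc/\CK$ with $\cc/\CK$ locally finite, and set $\rho = \ell_{\cc/\CK}^\pi$. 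Because $\ell_{\cc/\CK}$ is morphism-faithful by Proposition \ref{prop:charac_loc_fin_tria}, the identity $\rho(f) = 0$ is equivalent to $\pi(f) = 0$, which forces $f$ to factor through some $K \in \CK$; applying the same faithfulness to identities yields $\rho_{ob}(K) = 0$ for $K \in \CK$, and in fact $\Ker\rho_{ob} = \CK$. Hence $\rho$ is localising. The final bijection with thick subcategories follows since two Verdier localisations are equivalent as CE-quotient functors if and only if their kernels agree.

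I expect the main obstacle to be the bookkeeping in the second paragraph: one must unwind the identification of $\pi^*|_{\mod\cc}$ with the Serre localisation $E$ provided by Theorem \ref{thm:consequence_coh_loc} carefully enough to match the two instances of composition length, and to confirm that under this identification the lift of $\ell_\cE$ indeed becomes the pullback along $\pi^*|_{\mod\cc}$ of the composition length on $\mod\cE$.
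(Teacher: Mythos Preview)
Your proposal is correct and follows essentially the same approach as the paper's proof: both compose the bijection of Theorem~\ref{thm:consequence_coh_loc} with that of Theorem~\ref{thm:LongBijRankFunc} through $\Sigma$-closed Serre subcategories of $\mod\cc$, use Remark~\ref{pullback} together with the identification of $\pi^*|_{\mod\cc}$ with the Serre localisation to verify that the resulting assignment is $\pi\mapsto\ell_\cE^\pi$, and handle the \emph{moreover} part via Proposition~\ref{prop: kernels} and the morphism-faithfulness of $\ell_\cE$. The only place where you are slightly terser than the paper is in the direction ``$\rho$ localising $\Rightarrow$ CE-quotient is Verdier'': to conclude that $\kappa$ is equivalent to the CE-quotient corresponding to $\rho$, you are implicitly invoking the full (unrestricted) bijection of Theorem~\ref{thm:consequence_coh_loc}, since equality of Serre subcategories there already forces equivalence of CE-quotients; the paper spells this step out explicitly.
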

\begin{proof}
    By Theorem~\ref{thm:consequence_coh_loc}, the assignment sending $\pi$ to $\Ker \pi^*|_{\mod \cc}$ induces a bijective correspondence between (a) and $\Sigma$-closed Serre subcategories $\cS$ of $\mod \cc$ for which $(\Mod \cc)/\overrightarrow{\cS}$ is locally finite and the Serre localisation $\Mod \cc \to (\Mod \cc)/\overrightarrow{\cS}$ admits an exact right adjoint. By Theorem~\ref{thm:LongBijRankFunc}, these are in one-to-one correspondence with basic rank functions $\rho$ on $\cC$ such that the Serre localisation $\Mod \cc \to (\Mod \cc)/\overrightarrow{\Ker \widetilde{\rho}}$ admits an exact right adjoint. 
    Combining these two bijections, we obtain a unique rank function $\rho$ such that $\Ker \widetilde{\rho} = \Ker \pi^*|_{\mod \cc}$. By Definition~\ref{definition_exact}, the basic rank functions $\rho$ appearing in the last correspondence are precisely the exact basic rank functions, so we get the bijection between (a) and (b). As explained in the proof of Theorem \ref{thm:consequence_coh_loc}, $\pi^*|_{\mod \cc}$ is the Serre localisation with respect to $\Ker \pi^*|_{\mod \cc}$. Thus, the rank function in (b) corresponding to $\pi$ is $\ell_\cE^\pi$, since in Theorem \ref{thm:lastPreliminaries} the corresponding additive function is $(\widetilde{\ell_\cE})^{\pi^*}$ (see Remark \ref{pullback}).
		
	For the second statement: Let $\CK$ be a thick subcategory of $\cc$ with $\cc/\CK$ locally finite. Consider the corresponding Verdier localisation $\pi: \cc \to \cc/\CK$ and the rank function $\ell_{\cc/\CK}^\pi$ on $\cc$. Since $\ell_{\cc/\CK}$ is morphism-faithful, then $\Ker \ell_{\cc/\CK}^\pi=\Ann \pi$. Hence, every morphism in $\Ker \ell_{\cc/\CK}^\pi$ factors through an object in $\CK$ and $\ell_{\cc/\CK}^\pi$ is localising. Conversely, pick a localising basic rank function $\rho$. Consider the functor $\kappa:\cc\to \cc/\Ker \rho_{ob}$. By Proposition \ref{prop: kernels}, $\Ker\kappa^*|_{\mod\cc}=\Ker \widetilde{\rho}$. Using the bijection between (a) and (b), there exists a CE-quotient functor $\pi:\cc \to \cE$ with $\cE$ locally finite and $\rho=\ell_{\cE}^{\pi}$. As $\ell_{\cE}$ is morphism-faithful, we get $\Ann\pi=\Ker \rho$ and $\Ker \pi^*|_{\mod \cc}=\Ker \widetilde{\rho}$. By Theorem \ref{thm:consequence_coh_loc}, the equality $\Ker \pi^*|_{\mod \cc}=\Ker \kappa^*|_{\mod \cc}$ implies that $\kappa$ and $\pi$ are equivalent CE-quotients functors. So $\pi$ is a Verdier localisation with kernel $\Ker \rho_{ob}$. In particular, the assignment $\CK \mapsto \ell_{\cc/\CK}^\pi$ gives a bijection between thick subcategories $\CK$ of $\cC$ with a locally finite Verdier quotient $\cc/\CK$ and localising basic rank functions on $\cc$.
\end{proof}

The reader may wonder what is the precise relation between exact, idempotent and localising rank functions. 
A prime (basic) rank function which is both exact and idempotent, but not localising, will be discussed in Example \ref{ex:Wodzicki}.
An example of a prime (basic) rank function which is neither idempotent nor exact -- and, therefore, not localising -- will be presented in detail in Section \ref{sec:Example_Cluster}. 
It would be interesting to have an example distinguishing exact and idempotent rank functions or to prove that these two classes coincide. The latter holds true if the underlying triangulated category $\cC$ is the category of compact objects in a compactly generated triangulated category (see Lemma \ref{lem:CE.idemp}). The proof of this fact relies on \cite[Corollary 12.6]{Krause2005}, which uses in an essential way the existence of the ambient large triangulated category. 
Note that all idempotent complete small triangulated categories admitting an enhancement arise as such categories of compact objects (see Remark~\ref{rem: algebraic,etc.=>compacts}). So in most natural situations exact and idempotent rank functions coincide.

\begin{remark}
    One can consider a more general class of localising, exact, and idempotent rank functions by removing the integrality assumption from their definitions. Proposition \ref{prop: kernels} would remain correct. However, Theorem \ref{thm: FDT_generalised} does not extend so easily to this more general setting. The reason being that the characterisation of locally finite categories relies heavily on integrality assumptions (see Proposition \ref{prop:charac_loc_fin_tria} and Corollary \ref{cor:charac_loc_finite_abelian}). It would be interesting to characterise triangulated categories admitting a morphism-faithful, but not necessarily integral, rank function and to describe all such functions. 
    On a related note, observe that if $\pi:\cc \to \cE$ is a CE-quotient functor and $\rho$ is a rank function on $\cE$, then $\rho^\pi$ is integral if and only if $\rho$ is integral. 
	This follows from the fact that $\pi^*$ is essentially surjective and from Theorem \ref{thm:TransInvariantAddFunctions<->rankFunctions}.
\end{remark}

\section{The case of compact objects}\label{sec:compact}

In this section, we restrict ourselves to the following setting: 
 $\cc=\cT^c$ is the full subcategory of compact objects in a compactly generated triangulated category $\cT$. 
Recall that a triangulated category $\cT$ is \emphbf{compactly generated} if it has coproducts and if the full subcategory $\cT^c$ of compact objects in $\cT$ is skeletally small and generates $\cT$, that is, $Y$ in $\cT$ is zero whenever $\Hom_\cT(X,Y)=0$ for every object $X$ in $\cT^c$. Here, an object $X$ in $\cT$ is called \emphbf{compact} if the functor $\Hom_\cT(X,-): \cT \to \Ab$ preserves coproducts.

\begin{remark}
\label{rem: algebraic,etc.=>compacts}

Note that each idempotent complete small triangulated category admitting an enhancement by a stable $\infty$-category is equivalent to the full subcategory of compact objects in a compactly generated triangulated category (see \cite[Proposition 5.4.2.2, Lemma 5.4.2.4]{Lurie_HTT}, \cite[Proposition 1.1.3.6, Lemma 1.2.4.6, Remark 1.4.4.3]{Lurie_HA}). In particular, each algebraic idempotent complete small triangulated category has this property, which also follows from the existence of a dg enhancement (see \cite[\S 4.3]{Keller1994deriving}, \cite[Theorem 3.8, Corollary~3.7]{Keller2006}).
\end{remark}

We will connect rank functions on $\cc$ with several pieces of data associated with the category $\cT$.

\subsection{Rank functions, endofinite objects and definable subcategories}

Every compactly generated triangulated category $\cT$ admits a pure-exact structure. The restricted Yoneda functor $\Upsilon:\cT\to \Mod \cC$, defined by
\[
\begin{tikzcd}
X \ar[r,mapsto] 
& \Upsilon(X)=\Hom_\cT(-,X)|_{\cC} ,
\end{tikzcd}
\]
provides the link between pure-injectives in $\cT$ and injectives in $\Mod \cC$. This is analogous to the study of pure-injectivity in a locally finitely presented category $\cA$ with products via the analysis of the injectives in the associated locally coherent category $\Flat(\Mod((\mod((\fp \cA)^{\op}))^{\op}))$ of flat functors in $\Mod((\mod((\fp \cA)^{\op}))^{\op})$ -- we refer to \cite[\S 3.3]{Crawley-Boevey1994} for further details and to \cite[Theorem 2.8]{Krause1998} for a generalisation of the results in \cite{Crawley-Boevey1994}.

\begin{remark}
\label{rmk:propRestYoneda}
	In general, the functor $\Upsilon$ is not fully faithful, but it commutes with arbitrary products in $\cT$, just like the classic Yoneda embedding. Because of the restriction to compacts, $\Upsilon$ also commutes with arbitrary coproducts.
\end{remark}

\noindent A triangle
\[
\begin{tikzcd}
\label{eq:pureExactTriangle}
X \ar[r, "f"] & Y \ar[r, "g"] & Z \ar[r] & \Sigma X
\end{tikzcd}
\]
in $\cT$ is \emphbf{pure-exact} if
\[
\begin{tikzcd}
0 \ar[r] & \Upsilon (X) \ar[r, "\Upsilon (f)"] & \Upsilon (Y) \ar[r, "\Upsilon (g)"] & \Upsilon (Z) \ar[r] & 0
\end{tikzcd}
\]
forms a short exact sequence in $\Mod \cC$. An object $X$ in $\cT$ is \emphbf{pure-injective} provided that every pure-exact triangle whose first term is $X$ splits. The object $X$ is \emphbf{$\varSigma$-pure-injective} if any coproduct of copies of $X$ is pure-injective.
Using Brown representability, it can be shown that the isoclasses of pure-injective objects in $\cT$ stand in one-to-one correspondence with the isoclasses of injectives in $\Mod \cC$:

\begin{theorem}[{\cite[Lemma 1.7, Theorem 1.8, Corollary 1.9]{Krause2000}}]
\label{thm:compactlyGeneratedTriaPureInj}
	The following statements hold:
	\begin{enumerate}
		\item $X$ is a pure-injective object in $\cT$ if and only if $\Upsilon(X)$ is  injective in $\Mod \cC$;
		\item every injective in $\Mod \cC$ is isomorphic to $\Upsilon (X)$ for some pure-injective object $X$ in $\cT$;
		\item the functor $\Upsilon$ induces an equivalence between the subcategory of pure-injectives in $\cT$ and the subcategory of injectives in $\Mod \cC$.
	\end{enumerate}
\end{theorem}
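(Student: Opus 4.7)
The plan is to prove (2) first via Brown representability, which will equip the constructed preimage with a natural isomorphism that makes pure-injectivity, and subsequently all of (1) and (3), fall out almost automatically. Given an injective $H$ in $\Mod\cC$, I would form the functor $F_H\colon \cT^{\op}\to\Ab$ by $F_H(Y)\coloneqq\Hom_{\Mod\cC}(\Upsilon(Y),H)$. Since $\Hom_\cT(C,-)$ is cohomological for every compact $C$, the functor $\Upsilon$ sends triangles to long exact sequences in $\Mod\cC$; composing with the exact functor $\Hom_{\Mod\cC}(-,H)$ shows $F_H$ is cohomological. Remark \ref{rmk:propRestYoneda} gives that $\Upsilon$ preserves coproducts, so $F_H$ sends coproducts to products. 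Brown representability (applicable since $\cT$ is compactly generated) then provides $X_H\in\cT$ and a universal element $\mu_H\colon\Upsilon(X_H)\to H$ such that the map $\Hom_\cT(Y,X_H)\to\Hom_{\Mod\cC}(\Upsilon(Y),H)$, $g\mapsto \mu_H\circ\Upsilon(g)$, is bijective. Evaluating at compact $Y=C$ and invoking Yoneda shows $\mu_H$ is itself an isomorphism $\Upsilon(X_H)\cong H$; composing with $\mu_H^{-1}$ rephrases the bijection as the statement that $\Upsilon\colon\Hom_\cT(Y,X_H)\to\Hom_{\Mod\cC}(\Upsilon(Y),\Upsilon(X_H))$ is a bijection for every $Y\in\cT$.

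Next I would verify that this $X_H$ is pure-injective, which completes (2). Any pure-exact triangle $X_H\xrightarrow{f}Y\to Z\to\Sigma X_H$ yields, under $\Upsilon$, a short exact sequence which splits because $\Upsilon(X_H)\cong H$ is injective, giving $r\colon\Upsilon(Y)\to\Upsilon(X_H)$ with $r\circ\Upsilon(f)=\id$. The bijectivity established above lifts $r$ to $\tilde r\colon Y\to X_H$, and the equality $\Upsilon(\tilde r\circ f)=\id_{\Upsilon(X_H)}=\Upsilon(\id_{X_H})$ combined with the bijection at $Y=X_H$ forces $\tilde r\circ f=\id_{X_H}$. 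For (1), the $(\Leftarrow)$ direction follows by applying this construction to $H=\Upsilon(X)$: one obtains a pure-injective $X_H$ together with a morphism $\phi\colon X\to X_H$ corresponding to the identity of $\Upsilon(X)\cong\Upsilon(X_H)$. Completing to a triangle $X\xrightarrow{\phi}X_H\to Z\to\Sigma X$ and applying $\Hom_\cT(C,-)$ for compact $C$ shows $\Hom_\cT(C,Z)=0$ for all such $C$ (both adjacent maps in the long exact sequence are isomorphisms), so compact generation of $\cT$ forces $Z=0$, giving $X\cong X_H$. Conversely, for $(\Rightarrow)$, embed $\Upsilon(X)$ into an injective envelope $H$, realise $H\cong\Upsilon(X_H)$ with $X_H$ pure-injective, lift the monomorphism to $\tilde\iota\colon X\to X_H$ via the natural isomorphism at $X_H$, and observe that the resulting triangle is pure-exact (since $\Upsilon(\tilde\iota)$ is monic); pure-injectivity of $X$ splits it, exhibiting $\Upsilon(X)$ as a summand of the injective $\Upsilon(X_H)$. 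Finally, (3) assembles from (1), (2), and the bijection $\Upsilon\colon\Hom_\cT(Y,X)\xrightarrow{\sim}\Hom_{\Mod\cC}(\Upsilon(Y),\Upsilon(X))$ for pure-injective $X$, which is now available since every pure-injective is of the form $X_H$ by the $(\Leftarrow)$ direction of (1).

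The delicate point — and the real content of the argument — is Stage 1: identifying the abstract universal element $\mu_H$ from Brown representability with an actual isomorphism in $\Mod\cC$, so that the representability bijection coincides with the action of $\Upsilon$ on morphisms. Once this is achieved, pure-injectivity of $X_H$, the liftings needed in both directions of (1), and the full faithfulness required for (3) all reduce to bookkeeping with this natural isomorphism together with the cohomologicity of $\Hom_\cT(C,-)$ on compact $C$ and the fact that compact generation detects isomorphisms through $\Upsilon$.
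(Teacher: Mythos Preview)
Your proposal is correct and follows precisely the route the paper indicates: the paper does not give its own proof but cites \cite[Lemma 1.7, Theorem 1.8, Corollary 1.9]{Krause2000} and remarks that the result is obtained ``using Brown representability'', which is exactly the argument you have reconstructed. The only point worth tightening is the parenthetical ``the resulting triangle is pure-exact (since $\Upsilon(\tilde\iota)$ is monic)'': this is true, but it uses the standard fact that for any triangle, monicity of the first map under $\Upsilon$ forces surjectivity of the second (via the long exact sequence and $\Sigma$-invariance), which you might state once rather than leave implicit.
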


Following \cite[Definition 1.1]{Krause1999}, an object $Z$ in $\cT$ is called \emphbf{endofinite} if $\Hom_\cT(X,Z)$ is an $\End_{\cT}(Z)$-module of finite length for every compact object $X$ in $\cT$. Endofinite objects are $\varSigma$-pure-injective (see for example \cite[\S7]{bennett2023characterisations}). This goes back to \cite[Theorem 1.2]{Krause1999}. We get the following restricted version of the equivalence in part $(3)$ of Theorem \ref{thm:compactlyGeneratedTriaPureInj}.

\begin{lemma}
\label{cor:BijAndDecompCompGenTria}
		The functor $\Upsilon$ induces an equivalence between the  subcategory of $\varSigma$-pure-injective objects in $\cT$ and the subcategory of $\varSigma$-pure-injective cohomological functors in $\Mod \cC$. This restricts to an equivalence between the subcategory of endofinite objects in $\cT$ and the subcategory of endofinite cohomological functors in $\Mod \cC$.
\end{lemma}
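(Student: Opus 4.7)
The plan is to bootstrap the equivalence of Theorem \ref{thm:compactlyGeneratedTriaPureInj} by restricting it along the coproduct-preserving functor $\Upsilon$ and then further intersecting with the finite endolength condition.

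First I would observe that the image of $\Upsilon$ always lies inside the subcategory of cohomological functors: applying $\Hom_\cT(-,X)$ to any triangle in $\cT$ gives a long exact sequence, and this property survives the restriction to $\cC = \cT^c$, so $\Upsilon(X)$ is cohomological for every $X$ in $\cT$. By Lemma \ref{lem:cohomologicalFpinjectiveFlat}, cohomological functors in $\Mod \cC$ are precisely the fp-injective ones, and the class of fp-injectives is closed under coproducts (recalled in Subsection \ref{subsection lcc}). Combined with the classical fact that fp-injective plus pure-injective equals injective, a cohomological functor which is $\Sigma$-pure-injective is automatically injective, and so is each coproduct of copies of it.

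For the first equivalence, I would use that $\Upsilon$ preserves coproducts (Remark \ref{rmk:propRestYoneda}), so $\Upsilon(X^{(I)}) \cong \Upsilon(X)^{(I)}$ for every index set $I$. Part (1) of Theorem \ref{thm:compactlyGeneratedTriaPureInj} then yields that $X^{(I)}$ is pure-injective in $\cT$ if and only if $\Upsilon(X)^{(I)}$ is injective in $\Mod \cC$; taking the conjunction over all $I$ gives that $X$ is $\Sigma$-pure-injective if and only if $\Upsilon(X)$ is a $\Sigma$-pure-injective cohomological functor. Full faithfulness on the subcategory of $\Sigma$-pure-injectives is inherited from part (3) of the same theorem. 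For essential surjectivity, given a $\Sigma$-pure-injective cohomological $H$, pick a pure-injective preimage $X$ via part (2); then $\Upsilon(X^{(I)}) \cong H^{(I)}$ is injective for every $I$, which forces $X^{(I)}$ to be pure-injective, i.e.\ $X$ is $\Sigma$-pure-injective.

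For the restricted equivalence on endofinite objects, I would combine Lemma \ref{lem:finiteendolengthforfunctors} with the ring isomorphism $\End_\cT(Z) \xrightarrow{\sim} \End_{\Mod \cC}(\Upsilon(Z))$ that the fully faithful restriction of $\Upsilon$ to pure-injectives induces. Under this isomorphism, the natural $\End_\cT(Z)$-module structure on $\Hom_\cT(X,Z) = \Upsilon(Z)(X)$ coincides with its $\End_{\Mod \cC}(\Upsilon(Z))$-module structure, so the two finite-length conditions defining endofinite on either side agree. To close the loop, note that endofinite objects in $\cT$ are pure-injective, and endofinite cohomological functors in $\Mod \cC$ are injective (being $\Sigma$-pure-injective, hence pure-injective, plus fp-injective); so both classes already sit inside the equivalence established in the previous paragraph.

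The main technical obstacle is the careful identification of the two module structures on $\Hom_\cT(X,Z)$ at the endofinite step, together with the slightly subtle observation that $\Sigma$-pure-injective cohomological objects in $\Mod \cC$ are automatically injective---which is precisely what allows the pure-injective/injective equivalence of Theorem \ref{thm:compactlyGeneratedTriaPureInj} to restrict along these additional constraints. Once these are in place, everything else is a direct consequence of that theorem and the fact that $\Upsilon$ commutes with coproducts.
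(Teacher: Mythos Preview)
Your proposal is correct and follows essentially the same approach as the paper: both arguments combine Theorem \ref{thm:compactlyGeneratedTriaPureInj}, Lemma \ref{lem:cohomologicalFpinjectiveFlat}, and the coproduct-preservation of $\Upsilon$ for the first part, and then identify the two module structures on $\Hom_\cT(X,Z)$ via Yoneda for the endofinite restriction. The paper's version is more terse---it cites Corollary \ref{cor:CohEndofin=InjFinEndoLength} rather than unpacking why endofinite cohomological functors are injective, and simply notes at the end that finite endolength objects in $\Mod\cC$ are $\varSigma$-pure-injective---but the underlying logic is the same as yours.
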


\begin{proof}
Recall from Subsection~\ref{subsection lcc} that an object in $\Mod \cC$ is injective if and only if it is fp-injective and pure-injective, and the class of fp-injective objects is closed under direct factors and coproducts. By Lemma~\ref{lem:cohomologicalFpinjectiveFlat}, a functor in $\Mod \cC$ is fp-injective if and only if it is cohomological. Thus, for $F$ in $\Mod \cC$,  any coproduct of copies of $F$ is injective if and only if $F$ is $\varSigma$-pure-injective and cohomological. By Theorem~\ref{thm:compactlyGeneratedTriaPureInj} and the fact that $\Upsilon$ commutes with coproducts, we see that this happens if and only if $F \cong \Upsilon(X)$ for a $\varSigma$-pure-injective object $X$ in $\cT$. Together with part (3) of Theorem~\ref{thm:compactlyGeneratedTriaPureInj}, this implies the first statement.

Recall that by Corollary \ref{cor:CohEndofin=InjFinEndoLength} endofinite cohomological functors are precisely the finite endolength injectives in $\Mod \cC$. In order to prove that the equivalence in part (3) of Theorem \ref{thm:compactlyGeneratedTriaPureInj} restricts to the equivalence in the second statement, it is enough to observe that for $Z$ in $\cT$ and $X$ in $\cC$, there is a natural isomorphism
	\begin{equation*}
	\Hom_{\cT}(X,Z) = (\Upsilon(Z))(X)\cong \Hom_{\Mod \cC} (\Hom_{\cC}(-,X),\Upsilon(Z)) = \Hom_{\Mod \cC} (\Upsilon(X),\Upsilon(Z)).
	\end{equation*}
Finally, recall that every object of finite endolength in $\Mod \cc$ is $\varSigma$-pure-injective. 
\end{proof}

Moreover, the dictionary provided by Theorem \ref{thm:compactlyGeneratedTriaPureInj} allows us to identify the elements in $\Zg (\Mod \cC)$ with the isoclasses of indecomposable pure-injective objects in $\cT$. The \emphbf{Ziegler spectrum} of a compactly generated triangulated category $\cT$, denoted by $\Sp \cT$, is defined to be the set of all isoclasses of indecomposable pure-injective  objects.
The identification of $\Sp \cT$ with $\Zg (\Mod \cC)$ via $\Upsilon$ induces a topology on $\Sp \cT$, as described in Subsection \ref{subsec:zieglerspectrumlcc}. Recall that a subcategory $\cd$ of $\cT$ is called \emphbf{definable} if there exists a family of functors $(F_i)_{i \in I}$ of the form
\[
\begin{tikzcd}[column sep=large]
	\Hom_\cT(Y_i,-)\ar[r, "{\Hom_\cT(f_i,-)}"]& \Hom_\cT(X_i,-)\ar[r]&F_i\ar[r]& 0,
\end{tikzcd} \text{ for } \begin{tikzcd}
	f_i: X_i  \ar[r]& Y_i
\end{tikzcd} \text{ in } \cC
\]
 such that $\cd=\{X\text{ in }\cT\mid F_i(X)=0 \text{ for all }i\in I\}$. By definition, a definable subcategory of $\cT$ is closed under coproducts. Note that definable subcategories in $\Mod\cC$ can be defined similarly.

We are ready to prove the main result of this subsection, using the fundamental correspondence in \cite{Krause2002a}. An object $Z$ in $\cT$ is called \emphbf{$\Sigma$-invariant} if $Z\cong \Sigma Z$. Moreover, for an ideal $I$ in $\cc$ we denote by $I^\perp$ the full subcategory of $\cT$ given by $\{Z\text{ in }\cT\mid \Hom_\cT(f,Z)=0\text{ for all } f\in I\}$.

\begin{theorem}
	\label{thm:FundamentalCorrespondenceCompGen}
	There is a bijective correspondence between:
	\begin{enumerate}[(a)]
		\item basic rank functions on $\cC$;
		\item isoclasses of basic $\Sigma$-invariant endofinite objects in $\cT$;
		\item $\Sigma$-closed closed discrete subsets of $\Sp \cT$ such that the coproduct of all their elements is
$\varSigma$-pure-injective;
		\item $\Sigma$-closed definable subcategories of $\cT$ whose objects are endofinite.
	\end{enumerate}
Consequently, every basic rank function $\rho$ on $\cC$, when evaluated on objects, is of the form 
$$\length_{\End_{\cT}(Z_\rho)} \Hom_\cT(-,Z_\rho)|_{\cC}$$ 
for some basic $\Sigma$-invariant endofinite object $Z_\rho$ in $\cT$. Moreover, the kernel on morphisms $\Ker\rho$ is the cohomological ideal  $$\Ker\rho = \Ann \Upsilon(Z_\rho) = \{f\text{ in }\cc\mid \Hom_\cT(f,Z_\rho)=0\},$$ and the definable subcategory associated with $\rho$ is $(\Ker\rho)^\perp$, which identifies with the additive closure of $Z_\rho$.
\end{theorem}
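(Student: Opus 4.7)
The plan is to transport the chain of bijections from Theorem \ref{thm:LongBijRankFunc} from $\Mod\cC$ to $\cT$ using the restricted Yoneda functor $\Upsilon$. Theorem \ref{thm:LongBijRankFunc} already identifies (a) with isoclasses of $\Sigma$-invariant basic endofinite cohomological functors in $\Mod\cC$ and with $\Sigma$-closed closed discrete subsets of $\Zg(\Mod\cC)$ satisfying the isolation property, so I only need to push these across $\Upsilon$. For the step (a)$\leftrightarrow$(b), Lemma \ref{cor:BijAndDecompCompGenTria} supplies an additive equivalence between endofinite objects of $\cT$ and endofinite cohomological functors in $\Mod\cC$, and such an equivalence automatically preserves basic objects (it sends indecomposables to indecomposables, preserves non-isomorphism, and commutes with coproducts). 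A direct computation using that $\Sigma^*$ is restriction along $\Sigma^{-1}$ gives $\Upsilon(\Sigma Z)(X)=\Hom_\cT(X,\Sigma Z)\cong \Hom_\cT(\Sigma^{-1}X,Z)=(\Sigma^*\Upsilon(Z))(X)$, so $\Sigma$-invariance transfers exactly. For (a)$\leftrightarrow$(c), the topology on $\Sp\cT$ is by definition pulled back from $\Zg(\Mod\cC)$ via $\Upsilon$, and Remark \ref{IsolatedRemark} reformulates the isolation-property condition as the $\Sigma$-pure-injectivity of the coproduct of representatives; since $\Upsilon$ preserves coproducts (Remark \ref{rmk:propRestYoneda}) and reflects $\Sigma$-pure-injectivity (Theorem \ref{thm:compactlyGeneratedTriaPureInj}), this transfers verbatim to the $\cT$-side.

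The passage to (d) is the most delicate step and is where I expect the main obstacle. I would invoke Krause's fundamental correspondence from \cite{Krause2002a}, relating definable subcategories of $\cT$ with closed subsets of $\Sp\cT$, and show that it restricts to a bijection between the $\Sigma$-closed definable subcategories all of whose objects are endofinite and the sets appearing in (c). In the forward direction, given a basic $\Sigma$-invariant endofinite object $Z_\rho$, the natural candidate is the closure of $Z_\rho$ under summands and (small) coproducts; this closure is $\Sigma$-closed and consists of endofinite objects, and the key technical point will be to verify that it is in fact definable — this uses endofiniteness of $Z_\rho$ in an essential way (products of copies of an endofinite object decompose as coproducts of copies by Krull--Remak--Schmidt--Azumaya, so no new objects arise under the additional closure operations required for definability). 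Conversely, a $\Sigma$-closed definable subcategory $\cD$ of endofinite objects is closed under coproducts, hence contains $\coprod_{[M]\in \cD\cap\Sp\cT}M$; this coproduct is $\Sigma$-pure-injective by definability, hence endofinite by Lemma \ref{cor:BijAndDecompCompGenTria} and Remark \ref{IsolatedRemark}, and $\cD\cap\Sp\cT$ is a set in (c). One then checks that these two constructions are mutually inverse.

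Finally, unwinding the bijections produces the explicit formulas. Under Theorem \ref{thm:LongBijRankFunc}, $\rho$ corresponds to the cohomological functor $H=\Upsilon(Z_\rho)$, for which $\widetilde{\rho}(F)=\length_{\End(H)}\Hom_{\Mod\cC}(F,H)$. Applying this to $F=\Hom_\cC(-,X)$, Yoneda identifies $\Hom_{\Mod\cC}(\Hom_\cC(-,X),H)$ with $H(X)=\Hom_\cT(X,Z_\rho)$, and the ring isomorphism $\End_\cT(Z_\rho)\cong \End_{\Mod\cC}(H)$ from Lemma \ref{cor:BijAndDecompCompGenTria} yields $\rho(\id_X)=\length_{\End_\cT(Z_\rho)}\Hom_\cT(X,Z_\rho)$. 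For the kernel on morphisms, given $f:X\to Y$ in $\cC$, the injectivity of $H$ applied to the canonical epi-mono factorisation of $\Hom_\cC(-,f)$ shows that $\Hom_{\Mod\cC}(\Imm\Hom_\cC(-,f),H)$ equals the image of the map $H(Y)\to H(X)$ induced by $f$. Hence $\rho(f)=0$ iff this map vanishes, that is, iff $\Hom_\cT(f,Z_\rho)=0$; so $\Ker\rho=\Ann\Upsilon(Z_\rho)$, and this ideal is cohomological by Lemma \ref{lem:kernel-cohom_ideal}. The identification of $(\Ker\rho)^\perp$ with the additive closure of $Z_\rho$ is then immediate from the forward direction of the previous paragraph, since this additive closure is the $\Sigma$-closed definable subcategory corresponding to $\rho$.
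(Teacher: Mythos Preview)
Your proposal is correct and follows essentially the same strategy as the paper: transport Theorem \ref{thm:LongBijRankFunc} along $\Upsilon$ using Lemma \ref{cor:BijAndDecompCompGenTria} for (a)$\leftrightarrow$(b), the homeomorphism $\Sp\cT\cong\Zg(\Mod\cC)$ together with Remark \ref{IsolatedRemark} for (c), Krause's fundamental correspondence \cite{Krause2002a} for (d), and then unwind via Yoneda and the injectivity of $\Upsilon(Z_\rho)$ for the explicit formulas.

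The one place worth flagging is precisely the step you identify as the main obstacle. Your sketch that the additive closure of $Z_\rho$ is definable (``products of copies collapse to coproducts, so no new objects arise'') captures the right intuition, but closure under products, coproducts and summands does not by itself characterise definable subcategories of $\cT$. The paper does not argue this directly in $\cT$: instead it cites \cite[Corollary 13.1.13]{Krause2022} for the fact that in $\Mod\cC$ the definable closure of an endofinite object coincides with its additive closure, and then transports this to $\cT$ via \cite[Corollary 4.4]{angeleri2017torsion} and Lemma \ref{cor:BijAndDecompCompGenTria}. Likewise, the final identification $\cd_\rho=(\Ker\rho)^\perp$ is not quite ``immediate'' from your forward construction: it uses that in the correspondence of \cite{Krause2002a} the passage from cohomological ideals to definable subcategories is exactly $I\mapsto I^\perp$, which the paper invokes explicitly.
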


\begin{proof}
Combining Theorem \ref{thm:LongBijRankFunc} with Lemma \ref{cor:BijAndDecompCompGenTria}, we get the bijection between (a) and (b). In fact, given a basic $\Sigma$-invariant endofinite object $Z$ in $\cT$, we consider the endofinite cohomological functor $\Upsilon(Z)$, which in turn gives rise to the additive function \[\widetilde{\rho}_{\Upsilon(Z)}=\length_{\End_{\Mod\cC} (\Upsilon(Z))}\Hom_{\Mod\cC}(-,\Upsilon(Z)).\] 
The corresponding rank function evaluated on an object $X$ in $\cC$ is of the form 
\[\rho_{\Upsilon(Z)}(\id_X)=\length_{\End_{\Mod\cC} (\Upsilon(Z))}\Hom_{\Mod\cC}(\Hom_\cC(-,X),\Upsilon(Z))=\length_{\End_{\cT}(Z)} \Hom_\cT(X,Z).\]

Moreover, to pass from (b) to (c), we take the indecomposable summands of the endofinite object. They form a closed subset of $\Sp\cT$, corresponding to a $\Sigma$-closed closed subset of $\Zg (\Mod \cC)$ such that the coproduct of all its elements is endofinite cohomological. By Corollary~\ref{cor:CohEndofin=InjFinEndoLength}, this condition on the coproduct is equivalent to finite endolength. By Remark~\ref{IsolatedRemark}, such subsets of $\Zg (\Mod \cC)$ coincide with $\Sigma$-closed closed discrete subsets such that the coproduct of all their elements is $\varSigma$-pure-injective. Identifying $\Zg (\Mod \cC)$ and $\Sp \cT$ again, these are  precisely the subsets in (c). To show that the passage from (b) to (c) induces a bijective correspondence, we refer again to Remark~\ref{IsolatedRemark} to see that the subsets in (c) correspond precisely to the $\Sigma$-closed closed discrete subsets of $\Zg (\Mod \cC)$ satisfying the isolation property. These are in bijection with (a) by Theorem \ref{thm:LongBijRankFunc} and, therefore, in bijection with (b) as well.

The bijection between (c) and (d) is a consequence of the fundamental correspondence in \cite{Krause2002a} that establishes a bijection between definable subcategories of $\cT$ and closed subsets of $\Sp \cT$ by assigning to $\cd$ the subset $\{[X]\in\Sp\ct\mid X\in\Obj\cd\}$. The converse is given by taking the definable closure of the objects representing the elements of a closed subset, that is, the smallest definable subcategory of $\cT$ containing these objects. In our restricted setting, this converse takes a particularly nice form. More precisely, if $C$ is a closed subset of $\Sp\ct$ such that the coproduct of the representatives of its elements is endofinite, then the corresponding definable subcategory of $\cT$ is given by the additive closure of this coproduct. Indeed, by \cite[Corollary 13.1.13]{Krause2022}, the definable closure of an endofinite object in $\Mod \cC$ coincides with its additive closure. Now we can transport this fact to $\cT$ using \cite[Corollary 4.4]{angeleri2017torsion} and Lemma \ref{cor:BijAndDecompCompGenTria}.
Finally, let $\rho$ be a rank function on $\cc$ with associated endofinite object $Z_\rho$ in $\cT$. It remains to check that
\[\Ker\rho=\{f\text{ in }\cc\mid \Hom_\cT(f,Z_\rho)=0\}.\]
From the assignments in the fundamental correspondence in \cite{Krause2002a}, we then obtain the desired description of the associated definable subcategory as $(\Ker\rho)^\perp$. To this end, observe that $f\in\Ker\rho$ if and only if $\Hom_{\Mod\cC}(\Imm\Hom_\cc(-,f),\Upsilon(Z_\rho))=0$. Using that $\Upsilon(Z_\rho)$ is injective in $\Mod\cC$, the latter is equivalent to $\Hom_{\Mod\cC}(\Upsilon(f),\Upsilon(Z_\rho))=0$, which in turn means that $\Hom_\cT(f,Z_\rho)=0$, as wanted.
\end{proof}

\begin{example}
Let us go back to Example \ref{ex:fin-dim_alg}. Fix an object $X$ in $\cper (A)$ for a finite-dimensional algebra $A$ over a field $k$ and consider the rank function $\vartheta_X$ restricted to $\cper (A)$. Recall that $\vartheta_X$ was defined on objects as $\sum_{i\in\Z}\dim_k\Hom_{\cper (A)}(\Sigma^iX,-)$.   Analysing the long exact sequence \eqref{LongExSeqForEx} for a morphism $f:Y\to Z$ in $\cper(A)$ yields $\vartheta_X(f)=\sum_{i\in\Z}\dim_k \Imm (\Hom_{\cper (A)}(\Sigma^iX,f))$, which implies that the rank function $\vartheta_X$ is integral and that the machinery developed so far applies, as $\cper (A)$ is the subcategory of compact objects in $\D(A)$. Let us now additionally assume that the algebra $A$ is basic, that is $A/J(A)\cong k\times \dots \times k$, where $J(A)$ denotes the Jacobson radical of $A$. Let us also specify to $X=A$. In that case, $\vartheta_A$ is the rank function computing the dimension of the total cohomology. Recall that $\vartheta_A=\vartheta^{\bbD A}$. It is not hard to check that the function $\vartheta_A$ is basic and that the  basic $\Sigma$-invariant endofinite cohomological object in the derived category $\D(A)$, which corresponds to $\vartheta_A$ in Theorem \ref{thm:FundamentalCorrespondenceCompGen}, is $\coprod_{i\in\Z}\Sigma^i\bbD A$. Thus, on objects 
of $\cper (A)$ we have
\[
\vartheta_A(-)=\length_{\End_{\D(A)}(\coprod\limits_{i\in\Z}\Sigma^i\bbD A)} \Hom_{\D(A)}(-,\coprod_{i\in\Z}\Sigma^i\bbD A).
\]
\end{example}

\subsection{Idempotent rank functions and smashing localisations} As before, assume that $\cc$ is the subcategory $\cT^c$ of compact objects in a compactly generated triangulated category $\cT$.

\begin{lemma}
\label{lem:CE.idemp} 
A rank function on $\cc$ is exact if and only if it is idempotent. Consequently, the correspondence mapping $\pi:\cc \to \cE$ to $\ell_\cE^\pi$ induces a bijection between:
	\begin{enumerate}[(a)]
		\item equivalence classes of CE-quotient functors from $\cC$ to locally finite triangulated categories;
		\item idempotent basic rank functions on $\cC$.
	\end{enumerate}
\end{lemma}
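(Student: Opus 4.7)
The proof hinges on reformulating exactness in terms of the ideal $\Ker\rho$ and then applying Krause's characterisation of exact ideals in the compactly generated setting. As a preliminary observation, for any integral rank function $\rho$ the ideal $\Ker\rho$ is always a $\Sigma$-closed cohomological ideal by Lemma~\ref{lem:kernel-cohom_ideal}, so the only piece of the ``exactness of $\Ker\rho$ as an ideal'' that is still open is whether $\Ker\rho$ is idempotent.

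First I would translate exactness of $\rho$ into a statement about $\Ker\rho$. By Theorem~\ref{thm:consequence_coh_loc} combined with the proof of Theorem~\ref{thm: FDT_generalised}, exactness of $\rho$ is equivalent to the existence of a CE-quotient functor $\pi:\cc\to\cE$ with $\Ker\pi^*|_{\mod\cc}=\Ker\widetilde{\rho}$. Unwinding via the Yoneda embedding, a morphism $f$ in $\cc$ lies in $\Ann\pi$ iff $\Imm\Hom_{\cc}(-,f)$ lies in $\Ker\pi^*|_{\mod\cc}$, which, under the above equality, happens iff $\widetilde{\rho}(\Imm\Hom_\cc(-,f))=\rho(f)=0$. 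Hence $\Ann\pi=\Ker\rho$, so $\rho$ is exact if and only if $\Ker\rho$ coincides with the annihilator of some CE-quotient functor from $\cc$.

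The key input specific to the compactly generated setting is then Krause's characterisation (\cite{Krause2005}): when $\cc=\cT^c$, a $\Sigma$-closed cohomological ideal of $\cc$ is of the form $\Ann\pi$ for a CE-quotient functor $\pi:\cc\to\cE$ if and only if it is idempotent. The nontrivial content here is that CE-quotients starting from $\cT^c$ are, up to equivalence, the restrictions to compact objects of smashing localisations of $\cT$, and one uses Brown representability to lift factorisations of morphisms witnessing idempotence from $\cT$ back to $\cc$. Combined with the reformulation above, this gives $\rho$ exact $\Leftrightarrow$ $\Ker\rho$ (cohomological and) idempotent $\Leftrightarrow$ $\rho$ idempotent, establishing the first claim. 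The bijection between equivalence classes of CE-quotients $\cc\to\cE$ with $\cE$ locally finite and idempotent basic rank functions on $\cc$ then drops out of Theorem~\ref{thm: FDT_generalised} by replacing ``exact basic'' with the synonymous ``idempotent basic''.

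The main obstacle is the careful invocation of the compactly generated hypothesis in the step ``cohomological idempotent $\Rightarrow$ exact''. Without compact generation one really needs Brown representability to pass from a well-behaved ideal in $\cc$ to a bona fide smashing localisation of $\cT$ (and hence to a CE-quotient at the level of compacts); in general triangulated categories idempotence of $\Ker\rho$ alone is not enough to guarantee exactness of the associated Serre localisation on $\Mod\cc$.
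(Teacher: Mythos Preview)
Your proof is correct and follows essentially the same route as the paper's: reduce exactness of $\rho$ to $\Ker\rho$ being an exact ideal, note that $\Ker\rho$ is always cohomological (Lemma~\ref{lem:kernel-cohom_ideal}), and then invoke Krause's result that in the compactly generated setting a cohomological ideal is exact if and only if it is idempotent; the bijection is then immediate from Theorem~\ref{thm: FDT_generalised}. The only difference is cosmetic: the paper cites the remark after Definition~\ref{definition_exact} for the equivalence ``$\rho$ exact $\Leftrightarrow$ $\Ker\rho$ exact ideal'', whereas you re-derive it via Theorem~\ref{thm:consequence_coh_loc}.
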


\begin{proof}
Recall that an integral rank function $\rho$ on $\cC$ is exact if and only if $\Ker \rho$ is an exact ideal, as defined after Definition \ref{definition_exact}. By \cite[Corollary 12.6, Lemma 8.4]{Krause2005}, an ideal in $\cC$ is exact if and only if it is $\Sigma$-closed, cohomological, and idempotent. By Lemma~\ref{lem:kernel-cohom_ideal}, the ideal $\Ker \rho$ is $\Sigma$-closed and cohomological for any rank function $\rho$, hence, we get that $\Ker \rho$ is exact if and only if it is idempotent. The first statement now follows from the definition of idempotent rank functions. The bijection between (a) and (b) is nothing but the bijection in Theorem \ref{thm: FDT_generalised}. 
\end{proof}

In what follows, we provide a link between rank functions and smashing localisations. Recall that a triangulated subcategory of $\cT$ closed under coproducts is called \emphbf{localising}, and a localising subcategory of $\cT$ is said to be \emphbf{smashing} if the inclusion functor admits a right adjoint preserving coproducts. Smashing subcategories $\cS$ of $\cT$ occur naturally as kernels of localisation functors preserving coproducts, and they induce decompositions of $\cT$ into smaller triangulated categories, namely torsion-torsionfree triples of the form $(\cS,\cS^\perp,(\cS^\perp)^\perp)$ (see \cite{Nicolas}). A typical example of a smashing subcategory is a localising subcategory generated by compact objects, i.e.~the smallest localising subcategory containing a given set of compacts.  The claim that all smashing subcategories of $\cT$ arise in this way is often referred to as the telescope conjecture, which was proved in some cases, but fails in general (see, for example, \cite{Keller1994remark}). 
The following is the main result of this subsection:

\begin{theorem}
\label{thm_smashing}
	The assignment sending $\rho$ to \[ \operatorname{Filt} (\Ker \rho) \coloneqq \{Z \text{ in } \cT \mid \text{every morphism } X \rightarrow Z, \text{ with } X \text{ in }  \cC, \text{ factors through some morphism in } \Ker \rho \}\]
induces a bijection between:
	\begin{enumerate}[(a)]
	\item idempotent basic rank functions on $\cC$;
		\item smashing subcategories $\cS$ of $\cT$ with $(\cT/\cS)^c$ locally finite.
	\end{enumerate}
Moreover, the rank function  is localising if and only if the corresponding smashing subcategory is compactly generated.	
\end{theorem}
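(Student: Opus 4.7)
The plan is to combine Lemma \ref{lem:CE.idemp} with the correspondence, due to Krause, between CE-quotients out of $\cT^c$ and smashing subcategories of $\cT$, and then to match the smashing subcategory assigned to $\rho$ with $\operatorname{Filt}(\Ker \rho)$. First, by Lemma \ref{lem:CE.idemp}, idempotent basic rank functions on $\cT^c$ are in bijection with equivalence classes of CE-quotient functors $\pi: \cT^c \to \cE$ with $\cE$ locally finite, and under this bijection $\Ker \widetilde{\rho} = \Ker \pi^*|_{\mod \cT^c}$. So the task reduces to matching such CE-quotients with the smashing subcategories in (b).

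Next, I would invoke Krause's dictionary \cite{Krause2000, Krause2005}: to a smashing subcategory $\cS$ one associates the restriction of $\cT \to \cT/\cS$ to compact objects $\cT^c \to (\cT/\cS)^c$, which is a CE-quotient (up to idempotent completion); conversely, to a CE-quotient $\pi$ one associates $\cS_\rho \coloneqq \{Z \in \cT \mid \Upsilon(Z) \in \overrightarrow{\Ker \widetilde{\rho}}\}$, where $\Upsilon$ is the restricted Yoneda functor. That $\cS_\rho$ is smashing (and not merely localising) is precisely the exactness of the right adjoint to the Serre localisation, which for $\cc = \cT^c$ coincides (by Lemma \ref{lem:CE.idemp}) with idempotency of $\rho$. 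Because idempotent completion does not alter $\mod(-)$, local finiteness of $\cE$ matches local finiteness of $(\cT/\cS_\rho)^c$.

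The hard part will be identifying $\cS_\rho$ with $\operatorname{Filt}(\Ker \rho)$. For this I would use that $\overrightarrow{\Ker \widetilde{\rho}}$ is the hereditary torsion class of finite type generated by $\Ker \widetilde{\rho}$: $M \in \overrightarrow{\Ker \widetilde{\rho}}$ iff every morphism from a representable into $M$ factors through a finitely presented object of $\Ker \widetilde{\rho}$, which by Theorem \ref{thm:TransInvariantAddFunctions<->rankFunctions} means through an object of the form $\Imm \Hom_{\cT^c}(-, f)$ with $f \in \Ker \rho$. Applied to $M = \Upsilon(Z)$ and combined with Yoneda, a morphism $g: X \to Z$ corresponds to $\Upsilon(g): \Hom_{\cT^c}(-, X) \to \Upsilon(Z)$; a factorisation through $F = \Imm \Hom_{\cT^c}(-, f)$ for $f: A \to B$ gives, by evaluating at $\id_X$, a morphism $h: X \to B$ with $h = f \circ k$ for some $k: X \to A$; then fp-injectivity of $\Upsilon(Z)$ extends $F \to \Upsilon(Z)$ along $F \hookrightarrow \Hom_{\cT^c}(-, B)$ to a map encoding some $p: B \to Z$, producing the required factorisation $g = p \circ f \circ k$. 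The converse is immediate by applying $\Upsilon$. Thus $\cS_\rho = \operatorname{Filt}(\Ker \rho)$.

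For the final assertion, Theorem \ref{thm: FDT_generalised} says that a basic (exact) rank function is localising iff the associated CE-quotient is a Verdier quotient $\kappa: \cT^c \to \cT^c/\CK$. Under the Krause correspondence, Verdier quotients of $\cT^c$ by thick subcategories $\CK$ match exactly the compactly generated smashing subcategories $\cS = \operatorname{Loc}_\cT(\CK)$, and any compactly generated smashing subcategory arises in this way with $\CK = \cS \cap \cT^c$. This yields the stated restriction.
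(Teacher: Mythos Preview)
Your proposal is correct and follows essentially the same route as the paper: combine Lemma~\ref{lem:CE.idemp} (so that $\Ker\rho=\Ann\pi$ is an exact ideal) with Krause's bijection between exact ideals and smashing subcategories \cite[Corollary~12.5]{Krause2005}, use \cite[Corollary~12.7]{Krause2005} to match local finiteness of $\cE$ with that of $(\cT/\cS)^c$, and invoke \cite[Lemma~13.1, Proposition~13.2]{Krause2005} together with Theorem~\ref{thm: FDT_generalised} for the localising statement. The only difference is that the paper quotes the assignment $I\mapsto\operatorname{Filt}(I)$ directly from Krause, whereas you first describe the smashing subcategory as $\Upsilon^{-1}(\overrightarrow{\Ker\widetilde{\rho}})$ and then verify by hand (correctly, using fp-injectivity of $\Upsilon(Z)$) that it coincides with $\operatorname{Filt}(\Ker\rho)$; this detour is fine but unnecessary, and your aside that ``$\cS_\rho$ smashing is precisely exactness of the right adjoint'' is itself a non-trivial part of Krause's dictionary rather than an independent observation.
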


\begin{proof}
By the previous lemma, idempotent basic rank functions $\rho$ on $\cC$ are in bijection with CE-quotient functors $\pi:\cc \to \cE$ to locally finite triangulated categories. Following this assignment, $\Ker \rho=\Ann \pi$ is an exact ideal. By \cite[Corollary 12.5]{Krause2005}, exact ideals $I$ in $\cc$ are in bijection with smashing subcategories $\cS=\operatorname{Filt}I$ of $\cT$. Moreover, by \cite[Corollary 12.7]{Krause2005}, the category $\cE$ can be identified with a subcategory of $\cT/\cS$ and the localisation $\cT \to \cT/\cS$ restricts to the CE-quotient $\pi:\cc \to \cE$. The subcategory $(\cT/\cS)^c$ then identifies with the closure of $\cE$ under direct factors in $\cT/\cS$. Thus, $\cE$ is locally finite if and only if $(\cT/\cS)^c$ is locally finite and we get the bijection in the statement. Finally, a smashing subcategory is generated by compact objects if and only if the corresponding CE-quotient is equivalent to a Verdier localisation (\cite[Lemma 13.1, Proposition 13.2]{Krause2005}). This is, in turn, equivalent to the rank function being localising by Theorem~\ref{thm: FDT_generalised}.
\end{proof}

This bijection can be understood as a special case of the bijection between (a) and (d) in Theorem \ref{thm:FundamentalCorrespondenceCompGen}. To see this, observe that the localisation functor $\cT\rightarrow \cT/\cS$ for any given smashing subcategory $\cS$ of $\cT$ admits a fully faithful right adjoint, which identifies $\cT/\cS$ with $\cS^\perp$ inside of $\cT$. The category $\cT/\cS \overset\sim\to \cS^\perp$ is always a compactly generated triangulated category (see \cite[Theorem 11.1]{Krause2005}). If $\cS$ arises from an idempotent rank function on $\cc$, the subcategory $\cS^\perp$ is precisely the definable subcategory of $\cT$ associated with the rank function in Theorem \ref{thm:FundamentalCorrespondenceCompGen} (see \cite[Theorem 12.1]{Krause2005}). 
In fact, the rank function is idempotent if and only if the corresponding definable subcategory is triangulated. This follows from the next lemma known to experts but somewhat hidden in the literature (see, for example, \cite[Proposition 6.3, Remark 6.4]{LakingVitoria2020}).

\begin{lemma}
\label{lem:smashing-definable}
The assignments $\cS \mapsto \cS^\perp$ and $\cd \mapsto {}^\perp \cd$ induce mutually inverse bijections between:
\begin{enumerate}[(a)]
    \item smashing subcategories $\cS$ of $\cT$;
    \item triangulated definable subcategories $\cd$ of $\cT$.
\end{enumerate}
\end{lemma}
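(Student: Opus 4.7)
The plan is to route both sides through the functor category $\Mod\cc$, using Theorem~\ref{thm:consequence_coh_loc} and the proof of Theorem~\ref{thm_smashing}. For $\cS$ smashing, $\cS^\perp$ is the essential image of the smashing localisation $L:\cT\to\cT$, hence triangulated and closed under coproducts (since $L$ preserves coproducts) and under products (as a right orthogonal). The equality $\cS^\perp = (\Ann\pi)^\perp$ for the associated CE-quotient $\pi:\cc\to(\cT/\cS)^c$, combined with \cite[Theorem~12.1]{Krause2005}, then shows that $\cS^\perp$ is definable. Conversely, for a triangulated definable $\cd$, the subcategory ${}^\perp\cd$ is triangulated and closed under coproducts (using that $\cd$ is closed under products together with the natural isomorphism $\Hom_\cT(\coprod_i X_i,Y)\cong\prod_i\Hom_\cT(X_i,Y)$), so it is a localising subcategory of $\cT$. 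To promote it to smashing, I would translate via the restricted Yoneda functor $\Upsilon$: attach to $\cd$ the Serre subcategory $\cS_\cd\subseteq\mod\cc$ consisting of finitely presented functors vanishing on $\cd$; triangulatedness of $\cd$ forces $\cS_\cd$ to be $\Sigma$-closed. One then verifies that the corresponding Serre localisation $\Mod\cc\to(\Mod\cc)/\overrightarrow{\cS_\cd}$ admits an exact right adjoint, placing $\cS_\cd$ into the scope of Theorem~\ref{thm:consequence_coh_loc}. The resulting CE-quotient then corresponds, via \cite[Corollary~12.5]{Krause2005}, to a smashing subcategory which one identifies with ${}^\perp\cd$.

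Mutual inversion follows quickly: the identity $\cS={}^\perp(\cS^\perp)$ is a consequence of the TTF triple $(\cS,\cS^\perp,(\cS^\perp)^\perp)$ attached to any smashing localisation, while $\cd=({}^\perp\cd)^\perp$ follows from the fact that both sides are triangulated definable subcategories producing the same Serre subcategory $\cS_\cd$ of $\mod\cc$, together with the injectivity of the correspondence between definable subcategories of $\cT$ and Serre subcategories of $\mod\cc$.

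The main obstacle is the upgrade from localising to smashing: proving that the Serre localisation attached to the $\Sigma$-closed Serre subcategory $\cS_\cd$ admits an \emph{exact} right adjoint, or equivalently that ${}^\perp\cd$ is reflective in $\cT$ with coproduct-preserving reflector. This is where the closure of $\cd$ under coproducts and pure subobjects, together with the compact generation of $\cT$, enters in an essential way; it is precisely the step that places triangulated definable subcategories on the smashing/idempotent side of the dictionary of Theorem~\ref{thm_smashing} and Lemma~\ref{lem:CE.idemp}.
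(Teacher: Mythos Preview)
Your forward direction ($\cS$ smashing $\Rightarrow$ $\cS^\perp$ triangulated definable) agrees with the paper's: both identify $\cS^\perp$ with $I^\perp$ for the exact ideal $I$ attached to $\cS$ via \cite[Corollary~12.5]{Krause2005}, and then invoke \cite[Theorem~12.1]{Krause2005}.

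The converse direction, however, contains a genuine gap which you yourself flag as ``the main obstacle'': you never prove that the Serre localisation $\Mod\cc \to (\Mod\cc)/\overrightarrow{\cS_\cd}$ has an exact right adjoint. Without this you cannot invoke Theorem~\ref{thm:consequence_coh_loc}, and the argument stalls at ``${}^\perp\cd$ is localising''. The difficulty is not merely technical: by \cite[Proposition~8.8, Corollary~12.5]{Krause2005}, exactness of that right adjoint is \emph{equivalent} to the associated ideal being exact, which in turn is equivalent to the corresponding subcategory of $\cT$ being smashing. So routing through the functor category does not, by itself, reduce the problem --- you have translated ``${}^\perp\cd$ is smashing'' into an equivalent statement without supplying an independent reason for it. The ingredients you list at the end (closure under coproducts and pure subobjects, compact generation) are indeed the relevant ones, but turning them into an actual proof of exactness is precisely the substantive content you are missing.

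The paper sidesteps this circularity by citing two external results that do the work directly. First, \cite[Proposition~4.5]{angeleri2017torsion} shows that for any triangulated definable $\cd$ the pair $({}^\perp\cd,\cd)$ is a $t$-structure with ${}^\perp\cd$ triangulated. Second, since $\cd$ is closed under coproducts (being definable), the equivalence of conditions (5) and (6) in \cite[Proposition~4.4.3]{Nicolas} upgrades ${}^\perp\cd$ from localising to smashing. Mutual inversion is then immediate from the $t$-structure identities $({}^\perp\cd)^\perp=\cd$ and ${}^\perp(\cS^\perp)=\cS$. The key input you are lacking is thus the $t$-structure result from \cite{angeleri2017torsion}, which encapsulates the purity-and-approximation argument you gesture at but do not carry out.
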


\begin{proof}
Let $\cS$ be a smashing subcategory of $\cT$. As already observed, $\cT/\cS$ identifies with $\cS^\perp$ inside $\cT$. Moreover, by \cite[Corollary 12.5]{Krause2005}, we have $\cS=\operatorname{Filt} I$ for some exact ideal $I$ in $\cc$. Hence, by \cite[Theorem 12.1]{Krause2005}, it follows that $\cS^\perp=I^\perp$, which is a definable subcategory by definition.

To prove the converse, first note that by \cite[Proposition 4.5]{angeleri2017torsion}, if $\cd$ is a triangulated definable subcategory of $\cT$, then $({}^\perp \cd, \cd)$ is a $t$-structure with ${}^\perp \cd$ triangulated. Since $\cd$ is closed under coproducts (as it is a definable subcategory of $\cT$), the result then follows from the equivalence between conditions (5) and (6) in \cite[Proposition 4.4.3]{Nicolas}.
\end{proof}

\begin{corollary}
\label{cor:exact-def_is_trian}
A cohomological $\Sigma$-closed ideal $I$ is exact if and only if the definable subcategory $I^\perp$ in $\cT$ is triangulated.
\end{corollary}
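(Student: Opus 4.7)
The forward direction is an immediate combination of Krause's correspondence between exact ideals and smashing subcategories with Lemma \ref{lem:smashing-definable}. Specifically, if $I$ is exact, then by \cite[Theorem 12.1, Corollary 12.5]{Krause2005} the subcategory $\cS\coloneqq\operatorname{Filt}(I)$ is smashing in $\cT$ with $\cS^\perp=I^\perp$, and Lemma \ref{lem:smashing-definable} then guarantees that $\cS^\perp=I^\perp$ is a triangulated definable subcategory.

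For the converse, assume $I$ is cohomological, $\Sigma$-closed, and that $I^\perp$ is triangulated. Note first that $I^\perp$ is automatically a definable subcategory of $\cT$: completing each $f:X\to Y$ in $I$ to a triangle $X\to Y\to\Cone f\to\Sigma X$ and using the induced long exact sequence shows that $\Hom_\cT(f,Z)=0$ if and only if the map $\Hom_\cT(\Cone f,Z)\to\Hom_\cT(Y,Z)$ is surjective, a definable condition. By Lemma \ref{lem:smashing-definable} there exists a smashing subcategory $\cS$ of $\cT$ with $\cS^\perp=I^\perp$. Let $J$ be the exact ideal in $\cc$ associated to $\cS$ via \cite[Corollary 12.5]{Krause2005}; then $J^\perp=\cS^\perp=I^\perp$ by \cite[Theorem 12.1]{Krause2005}. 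Since $J$ is characterised as $\{f\in\Mor\cc\mid\Hom_\cT(f,Z)=0\text{ for all }Z\in\cS^\perp\}$, the inclusion $I\subseteq J$ is immediate, so it remains to establish $J\subseteq I$.

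The reverse inclusion reduces to the reflexivity principle that a $\Sigma$-closed cohomological ideal is determined by its right orthogonal in $\cT$. My plan is to derive this from the dictionary of Section \ref{sec:preliminaries}: $\Sigma$-closed cohomological ideals in $\cc$ correspond bijectively to $\Sigma$-closed Serre subcategories of $\mod\cc$ via $I\mapsto\cS_I$, where $\cS_I$ denotes the Serre subcategory generated by $\{\Imm\Hom_\cc(-,f)\mid f\in I\}$ (see \cite[Lemma 8.4]{Krause2005}). Composing with Theorem \ref{thm:ZieglerSpecLocallyCohCat} one reaches closed subsets of $\Zg(\Mod\cc)=\Sp\cT$. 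On the other hand, Krause's fundamental correspondence from \cite{Krause2002a} identifies definable subcategories of $\cT$ with closed subsets of $\Sp\cT$. Tracing these identifications via the restricted Yoneda functor $\Upsilon$ shows that $I^\perp$ determines the Serre subcategory $\cS_I$, and hence $I$ itself. Applied to $I^\perp=J^\perp$, this yields $I=J$, so $I$ is exact.

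The main obstacle is precisely this compatibility check: one must verify that the chain of bijections on the $\mod\cc$ side of the dictionary is compatible, via $\Upsilon$, with the definable-subcategory bijection on the $\cT$ side. Granted this compatibility, which is routine but somewhat technical given the amount of machinery from Section \ref{sec:preliminaries} involved, the corollary follows by the formal manipulation above.
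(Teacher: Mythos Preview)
Your proof is correct and follows essentially the same route as the paper's: the forward direction via $\operatorname{Filt}(I)$ and Lemma~\ref{lem:smashing-definable}, and for the converse, producing a smashing subcategory $\cS$ from the triangulated definable category $I^\perp$, extracting its exact ideal $J$, and then arguing $I=J$ from $I^\perp=J^\perp$.

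The one place where you diverge is in the final step. You treat the implication ``$I^\perp=J^\perp\Rightarrow I=J$'' as something to be assembled from the chain (cohomological ideals $\leftrightarrow$ Serre subcategories of $\mod\cc$ $\leftrightarrow$ closed subsets of $\Sp\cT$ $\leftrightarrow$ definable subcategories of $\cT$), and you flag the compatibility of these bijections as the ``main obstacle''. The paper bypasses this entirely by invoking the fundamental correspondence of \cite{Krause2002a} directly: that correspondence already gives a bijection between cohomological ideals in $\cc$ and definable subcategories of $\cT$ via $I\mapsto I^\perp$, so two cohomological ideals with the same right orthogonal coincide. In other words, the compatibility you are worried about \emph{is} the content of \cite{Krause2002a}, and once you cite it the proof is one line. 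Your detour through \cite[Lemma 8.4]{Krause2005} and Theorem~\ref{thm:ZieglerSpecLocallyCohCat} is not wrong, but it reproves a known result rather than citing it.
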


\begin{proof}
As discussed in the proof of Lemma \ref{lem:smashing-definable}, if $I$ is exact, then $I^\perp=(\operatorname{Filt} I)^{\perp}$ is triangulated. Conversely, if $\cd\coloneqq I^\perp$ is triangulated, then ${}^\perp \cd$ is a smashing subcategory by Lemma \ref{lem:smashing-definable}, and so  $\cd = J^{\perp}$ for some exact ideal $J$. Since $J$ is cohomological, it must coincide with $I$ by the fundamental correspondence between cohomological ideals and definable subcategories in \cite{Krause2002a}. Hence, $I$ is exact.
\end{proof}

As a consequence of Lemma \ref{lem:CE.idemp} and Corollary \ref{cor:exact-def_is_trian}, we get the desired characterisation of idempotent rank functions.

 \begin{corollary}
 \label{cor:idemp-cdrho_triang}
 Let $\rho$ be a basic rank function on $\cc$ with the associated definable subcategory $\cd_\rho\coloneqq(\Ker\rho)^\perp$ in $\cT$. Then $\rho$ is idempotent if and only if $\cd_\rho$ is a triangulated subcategory of $\cT$.
 \end{corollary}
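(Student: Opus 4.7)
The plan is to assemble the corollary from the two results just established, since they have been set up precisely to make this a short deduction. First I would recall that by Lemma \ref{lem:CE.idemp}, in the compactly generated setting $\cC = \cT^c$, a rank function on $\cC$ is idempotent if and only if it is exact; by the discussion just after Definition \ref{definition_exact}, this in turn is equivalent to $\Ker \rho$ being an exact ideal in the sense of Krause. So the task reduces to showing that $\Ker \rho$ is exact if and only if $\cd_\rho = (\Ker \rho)^\perp$ is a triangulated subcategory of $\cT$.

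Next, I would invoke Lemma \ref{lem:kernel-cohom_ideal}, which guarantees that $\Ker \rho$ is a cohomological ideal, and note that it is $\Sigma$-closed because $\rho$ is $\Sigma$-invariant (as a rank function). This puts $\Ker \rho$ precisely in the class of ideals to which Corollary \ref{cor:exact-def_is_trian} applies, and that corollary asserts exactly the desired equivalence: such an ideal $I$ is exact if and only if $I^\perp$ is a triangulated subcategory of $\cT$. Applying this with $I = \Ker \rho$ gives that $\Ker \rho$ is exact if and only if $\cd_\rho$ is triangulated.

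Chaining these equivalences together, we conclude that $\rho$ is idempotent if and only if $\cd_\rho$ is triangulated, which is the statement of the corollary. There is really no obstacle here; the corollary is essentially a bookkeeping step combining Lemma \ref{lem:CE.idemp} (which uses \cite[Lemma 8.4, Corollary 12.6]{Krause2005} to identify exact and idempotent for cohomological ideals in the compactly generated setting) with Corollary \ref{cor:exact-def_is_trian} (which uses Lemma \ref{lem:smashing-definable} and the fundamental correspondence of \cite{Krause2002a}). The only thing to double-check is that $\Ker \rho$ is indeed $\Sigma$-closed and cohomological so that Corollary \ref{cor:exact-def_is_trian} is applicable, but both properties follow immediately from the definition of a rank function and from Lemma \ref{lem:kernel-cohom_ideal}.
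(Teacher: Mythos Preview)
Your proposal is correct and follows precisely the approach indicated in the paper, which states that the corollary is a consequence of Lemma \ref{lem:CE.idemp} and Corollary \ref{cor:exact-def_is_trian}. You have spelled out the chain of equivalences and verified the applicability of Corollary \ref{cor:exact-def_is_trian} via Lemma \ref{lem:kernel-cohom_ideal}, exactly as intended.
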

 
We consider further consequences of the established link with smashing subcategories. Theorem \ref{thm_smashing} allows us to measure the failure of a restricted version of the telescope conjecture via rank functions. Note that the counterexample to the telescope conjecture presented in \cite{Keller1994remark} also serves as a counterexample to this restricted version (see Example \ref{ex:Wodzicki}).

\begin{corollary}
\label{cor_smashing}
Every smashing subcategory $\cS$ in $\ct$ with $(\cT/\cS)^c$ locally finite is generated by compact objects if and only if every idempotent rank function on $\cC$ is localising.
\end{corollary}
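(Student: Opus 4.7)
The plan is to deduce this corollary directly from Theorem \ref{thm_smashing}. That theorem provides a bijection between idempotent \emph{basic} rank functions on $\cC$ and smashing subcategories $\cS$ of $\cT$ with $(\cT/\cS)^c$ locally finite, sending localising rank functions precisely to compactly generated smashing subcategories. Consequently, ``every idempotent basic rank function on $\cC$ is localising'' is immediately equivalent to ``every smashing subcategory $\cS$ of $\cT$ with $(\cT/\cS)^c$ locally finite is compactly generated.'' The only remaining task is to upgrade this equivalence from idempotent basic rank functions to all idempotent rank functions.

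To carry out this upgrade, I would argue that being localising (resp.\ idempotent) depends only on the kernels $\Ker\rho$ and $\Ker\rho_{ob}$, not on the multiplicities in the irreducible decomposition. Given an idempotent rank function $\rho$, use Theorem \ref{thm:decomposition} to write $\rho=\sum_{i\in I} n_i\rho_i$ as a locally finite sum with pairwise distinct irreducibles $\rho_i$ and multiplicities $n_i\in\Z_{>0}$, and set $\rho'\coloneqq\sum_{i\in I}\rho_i$, which is a basic rank function. Since every $\rho_i$ is non-negative, one has $\Ker\rho=\bigcap_i\Ker\rho_i=\Ker\rho'$ and similarly $\Ker\rho_{ob}=\Ker\rho'_{ob}$. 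Consequently $\rho$ is idempotent iff $\rho'$ is idempotent, and $\rho$ is localising iff $\rho'$ is localising (directly from the definitions in \S\ref{sec:Integral}).

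With this observation, the statement ``every idempotent basic rank function is localising'' is equivalent to ``every idempotent rank function is localising.'' Combining with Theorem \ref{thm_smashing} completes the proof. There is no substantial obstacle in this argument; the main content is already packaged in Theorem \ref{thm_smashing}, and the upgrading step is a straightforward check using the decomposition theorem and the fact that positive integer combinations of non-negative functions share the same vanishing locus.
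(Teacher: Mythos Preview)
Your proof is correct and follows the same approach the paper intends: the corollary is stated without proof in the paper, immediately after observing that Theorem~\ref{thm_smashing} ``allows us to measure the failure of a restricted version of the telescope conjecture via rank functions,'' so it is treated as a direct consequence. Your argument spelling out the passage from basic to arbitrary idempotent rank functions---via the observation that $\Ker\rho$ and $\Ker\rho_{ob}$ are unchanged when multiplicities are dropped---is a detail the paper leaves implicit, and your justification is sound.
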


We now consider the special case where $\cT=\D(A)$ is the derived category of a dg algebra $A$ over a commutative ring $k$ and $\cc=\cT^c=\cper (A)$. We wish to extend the bijection between localising prime rank functions on $\cper(A)$ and equivalence classes of finite homological epimorphisms of dg algebras $A\to B$ with $B$ simple artinian, proved in \cite[Theorem 6.5]{Chuang2021}.

Recall that the category of dg algebras over $k$ carries a projective model structure and any dg algebra $A$ admits a cofibrant replacement quasi-isomorphic to $A$ (see \cite{Jardine}, \cite[Theorem 4.1(3), \S 5]{SchwedeShipley} for the construction of the model structure, and \cite[\S 2, Appendix A.4]{BazzoniStovicek} for the discussion of its relevance to the study of homological epimorphisms). From now on, we will assume that $A$ is cofibrant.  
A detailed exposition on homological epimorphisms can be found in \cite{Nicolas,BruningHuber2008,Pauksztello, NicolasSaorin, BazzoniStovicek}. We will give only the necessary definitions.
A morphism of dg algebras $f:A\to B$ is a \emphbf{homological epimorphism} if the map $B\otimes^L_A B \to B$ induced by multiplication is a quasi-isomorphism. This holds if and only if the  restriction functor $f_*:\D(B)\to \D(A)$ is fully faithful. 
The restriction functor $f_*$ has a left adjoint given by $-\otimes^L_A B$. 
Moreover, the essential image of $f_*$ is of the form $\cS^\perp$ for a smashing subcategory $\cS$ of $\D(A)$, where $$\cS=\{X \text{ in }\D(A)\mid X\otimes^L_A B=0\}.$$
We can identify the functor $-\otimes^L_A B: \D(A)\to \D(B)\cong \D(A)/\cS$ with the smashing localisation associated with $\cS$. Two homological epimorphisms $f:A\to B$ and $f':A\to B'$ are called \emphbf{equivalent}, if the essential images of the  restriction
functors $f_*$ and $f'_*$ coincide (equivalently, if there exists an isomorphism $g: B \to B'$ in the homotopy category of dg algebras such that $f' = g \circ f$). This yields a bijection between equivalence classes of homological epimorphisms $A\to B$ and smashing subcategories of $\D(A)$ (\cite[Theorem 5.4.4]{Nicolas}, \cite[Theorem 3]{BruningHuber2008},\cite[\S 2]{BazzoniStovicek}). Combining this with Theorem \ref{thm_smashing}, we obtain the following result:

\begin{theorem}
\label{cor_dg}
Let $A$ be a cofibrant dg algebra. There is a bijection between:
	\begin{enumerate}[(a)]
	\item idempotent basic rank functions on $\cper (A)$;
		\item equivalence classes of homological epimorphisms $A\to B$ with $\cper (B)$ locally finite. 
	\end{enumerate}
The idempotent basic rank function $\rho$ on $\cper (A)$ corresponding to the homological epimorphism $f:A\to B$ has the following counterpart on objects of $\cper (A)$:
\[
\rho_{ob}(X)=(\ell_{\cper (B)})_{ob}(X\otimes_A^L B).
\]
\end{theorem}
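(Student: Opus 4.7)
The plan is to combine Theorem \ref{thm_smashing} (applied to $\cT = \D(A)$, so that $\cT^c = \cper(A)$) with the bijection between equivalence classes of homological epimorphisms $A \to B$ and smashing subcategories of $\D(A)$, which is recalled in the paragraph preceding the statement and is due to \cite{Nicolas,BazzoniStovicek}. Under the latter bijection, a homological epimorphism $f: A \to B$ corresponds to the smashing subcategory $\cS_f = \{X \in \D(A) \mid X \otimes^L_A B = 0\}$, and $-\otimes^L_A B$ descends to an equivalence $\D(A)/\cS_f \simeq \D(B)$ compatible with the restriction functor $f_*$.

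The key point is to match the locally finite condition on both sides. Under the equivalence $\D(A)/\cS_f \simeq \D(B)$, compact objects on the left correspond to compact objects on the right, and the compact objects of $\D(B)$ are precisely $\cper(B)$. Hence $(\D(A)/\cS_f)^c \simeq \cper(B)$ as triangulated categories, so the subclass of smashing subcategories $\cS$ with $(\D(A)/\cS)^c$ locally finite is in bijection with the subclass of equivalence classes of homological epimorphisms $A\to B$ with $\cper(B)$ locally finite. Composing this restriction with the bijection of Theorem \ref{thm_smashing} gives the desired bijection between (a) and (b).

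For the explicit formula on objects, I would trace through the chain of identifications. By Lemma \ref{lem:CE.idemp} together with Theorem \ref{thm: FDT_generalised}, the idempotent basic rank function associated with $\cS_f$ is $\ell_{\cE}^\pi$, where $\pi: \cper(A) \to \cE$ is the restriction to compacts of the smashing localisation $\D(A)\to \D(A)/\cS_f$ and $\cE = (\D(A)/\cS_f)^c$. Via the identification $\cE \simeq \cper(B)$, the functor $\pi$ becomes the restriction of $-\otimes^L_A B$ to $\cper(A)$. Evaluating on the identity of $X$ yields
\[
\rho_{ob}(X) = \ell_{\cE}(\id_{\pi(X)}) = (\ell_{\cper(B)})_{ob}(X \otimes^L_A B),
\]
which is the claimed formula.

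I expect the main obstacle to be the careful verification that the bijection of \cite{Nicolas,BazzoniStovicek} is compatible with the one in Theorem \ref{thm_smashing} in the sense that the smashing subcategory $\cS_f$ produced from $f:A\to B$ is indeed the same smashing subcategory that arises from the cohomological ideal $\Ker \rho$ via $\operatorname{Filt}$. Concretely, one must check that $\operatorname{Filt}(\Ann \pi) = \cS_f$, which essentially amounts to the fact that $-\otimes^L_A B$ preserves coproducts and its kernel is determined by its restriction to $\cper(A)$. Once this compatibility is in place, all other steps are formal identifications inside the framework developed in Sections \ref{sec:Integral} and \ref{sec:compact}.
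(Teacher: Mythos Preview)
Your proposal is correct and follows essentially the same route as the paper: combine Theorem~\ref{thm_smashing} with the Nicol\'as--Bazzoni--\v{S}\v{t}ov\'{\i}\v{c}ek bijection between smashing subcategories of $\D(A)$ and homological epimorphisms out of $A$, match the locally finite conditions via $(\D(A)/\cS_f)^c\simeq\cper(B)$, and then read off the formula on objects using Lemma~\ref{lem:CE.idemp}. Your anticipated obstacle is not a genuine issue: the identity $\operatorname{Filt}(\Ann\pi)=\cS_f$ is already part of the machinery underlying Theorem~\ref{thm_smashing} (via \cite[Corollaries~12.5,~12.7]{Krause2005}), so no extra verification is needed. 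One small point the paper handles more carefully: the CE-quotient $\pi$ lands in a subcategory $\cE$ of $(\cT/\cS)^c\simeq\cper(B)$ that is only dense up to direct factors, but since $\ell_{\cper(B)}$ restricts to $\ell_\cE$ this does not affect the formula.
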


\begin{proof}
The bijection between (a) and (b) follows from Theorem \ref{thm_smashing} and the fact that smashing subcategories of $\D(A)$ and equivalence classes of homological epimorphisms starting in $A$ are in one-to-one correspondence. To check the description of $\rho_{ob}$, note that the bijection in Theorem \ref{thm_smashing} first associates with the smashing localisation $-\otimes^L_A B: \D(A)\to \D(B)$ a CE-quotient functor, which identifies, up to direct factors, with the restriction 
$-\otimes^L_A B: \cper(A)\to \cper(B)$.
The corresponding rank function is then of the form $(\ell_{\cper(B)})^{-\otimes^L_A B}$ by Lemma \ref{lem:CE.idemp}. When evaluated on objects, this yields $\ell_{\cper (B)}(X\otimes_A^L B)$.
\end{proof}

Note that in the case of prime localising rank functions, the bijection in Theorem \ref{cor_dg} restricts to the bijection in \cite[Theorem 6.5]{Chuang2021}. Indeed, both bijections go via the kernel of the Verdier localisation $-\otimes^L_A B: \cper(A)\to \cper(B)$ in this restricted setting.  

\begin{remark}
  In fact, since \cite[Theorem 5.4.4]{Nicolas} and \cite{NicolasSaorin} apply to homological epimorphisms of small dg categories, there is an analogous bijection, for a small cofibrant  dg category $\cA$ (using the model structure defined in \cite{Tabuada}), between idempotent basic rank functions on $\cper (\cA)$ and equivalence classes of homological epimorphisms of dg categories $\cA \to \cb$ with $\cper (\cb)$ locally finite. 
  In particular, using Remark~\ref{rem: algebraic,etc.=>compacts}, one gets a bijection between idempotent basic rank functions on a skeletally small algebraic triangulated category  with split idempotents and certain homological epimorphisms from its cofibrant dg enhancement.
\end{remark}

We finish this section with an example of an exact rank function that is not localising. It is defined on the subcategory of compact objects in a compactly generated triangulated category which does not satisfy the telescope conjecture. This example stems from the famous example by Wodzicki \cite[Example 4.7(3)]{Wodzicki1989}.

\begin{example}
\label{ex:Wodzicki}
Let $k$ be a field. Fix an integer $l\geq 2$. Consider the algebra
\[
B\coloneqq k[t,t^{l^{-1}}, t^{l^{-2}}, \dots]=\bigcup_{n=0}^{\infty}k[t^{l^{-n}}].
\]
Let $J$ be the augmentation ideal of $B$. The ideal $J$ is generated by $t,t^{l^{-1}}, t^{l^{-2}}, \dots$ Let $A$ be the localisation of $B$ at $J$, which is local. Denote by $I$ the unique maximal ideal of $A$. The ideal $I$ satisfies the following property: 
$\operatorname{Tor}^A_i(A/I,A/I)=0 \text{ for } i>0$ (see \cite{Keller1994remark}). By \cite[Proposition]{Keller1994remark}, the localising subcategory of $\operatorname{D(}\hspace{-2pt}A\hspace{-2pt}\operatorname{)}$ generated by $I$, considered as an $A$-module, is smashing, but does not contain any compact objects.  The map $A\to A/I$ is a homological epimorphism, which induces the cohomological quotient functor
\[
F\coloneqq -\otimes_A A/I: \cper(A) \to \cper(A/I)
\]
with $\Ker F=0$ and $\Ann F\neq 0$ (see \cite[\S 15]{Krause2005}). Note that the algebra $A/I$ is a field, and so $F$ is essentially surjective. Therefore, the functor $F$ is in fact a CE-quotient functor by \cite[Lemma 7.3]{Krause2005}.
 Thus, the triangulated category $\cper(A/I)$ is simple and there exists a unique irreducible rank function $\rho$ on $\cper(A/I)$. As before, we define the rank function $\rho^F$ on $\cper(A)$. On objects it takes the following form:
\[
\rho_{ob}^F(X)\coloneqq\rho_{ob}(F(X))\text{ for } X\text{ in } \cper(A).
\]
Then $\rho^F$ is an exact rank function which is not localising. Note that $\rho^F$ is prime by construction. Indeed, $F(A)=A/I$ is an indecomposable generator of
$\cper(A/I)$, thus $\rho_{ob}(A/I) = 1$ and $\rho_{ob}^F(A)= \rho_{ob}(A/I) = 1$.
\end{example}

In the example above, the target of the CE-quotient functor is a simple triangulated category and the corresponding rank function is prime. However, we do not know if the bijections from Theorem \ref{thm: FDT_generalised} and Lemma \ref{lem:CE.idemp} restrict to bijections between exact, respectively idempotent, prime rank functions and CE-quotient functors to simple triangulated categories (compare with Theorem \ref{thm:localising_CL}). 

\section{Example: cluster category of type \texorpdfstring{$A_3$}{A3}}
\label{sec:Example_Cluster}
In this section, we consider an example of a locally finite triangulated category.
We describe all integral rank functions on this category and show that one of the irreducible rank functions is prime, but neither idempotent nor exact or localising. 

Let $\cC_{A_3}$ be the cluster category of type $A_3$ over an algebraically closed field $k$. This category can be constructed as the orbit category of the category $\operatorname{D^b}(kA_3)$ modulo the action of $\tau^{-1}\Sigma$ for a fixed orientation of $A_3$ (\cite{Buan2006tilting}). Here $\tau$ denotes the Auslander--Reiten translate. The category $\cC_{A_3}$ is a locally finite triangulated category. 
It is Krull--Schmidt, $\Hom$-finite, 6-periodic and has Auslander--Reiten triangles. It has 9 indecomposable objects up to isomorphism.  

\vspace{-50pt}
\begin{figure}[h]
    \centering
    \[\begin{tikzcd}
\begin{tikzpicture}

\node (v1) at (-5,-3.5) {$T_1$};
\node (v2) at (-4,-2.5) {$T_2$};
\node (v3) at (-3,-1.5) {$T_3$};
\node (v5) at (-2,-2.5) {$\Sigma^{-1}T_2$};
\node (v7) at (-1,-3.5) {$\Sigma^{-2}T_1$};
\node (v4) at (-3,-3.5) {$\Sigma^{-1}T_1$};
\node (v6) at (-1,-1.5) {$\Sigma T_1$};
\node (v8) at (0,-2.5) {$\Sigma T_2$};
\node (v10) at (1,-3.5) {$\Sigma T_3$};
\node (v9) at (1,-1.5) {$T_1$};
\node (v11) at (2,-2.5) {$T_2$};
\node (v12) at (3,-3.5) {$T_3$};
\draw [->]  (v1) -- (v2);
\draw [->]  (v2) -- (v3);
\draw [->]  (v2) -- (v4);
\draw [->]  (v3) -- (v5);
\draw [->]  (v4) -- (v5);
\draw [->]  (v5) -- (v6);
\draw [->]  (v5) -- (v7);
\draw [->]  (v7) -- (v8);
\draw [->]  (v6) -- (v8);
\draw [->]  (v8) -- (v9);
\draw [->]  (v8) -- (v10);
\draw [->]  (v9) -- (v11);
\draw [->]  (v10) -- (v11);
\draw [->]  (v11) -- (v12);
\end{tikzpicture}
\end{tikzcd}\]
    \caption{The Auslander--Reiten quiver of $\cC_{A_3}$.}
   \label{fig:my_label}
\end{figure}
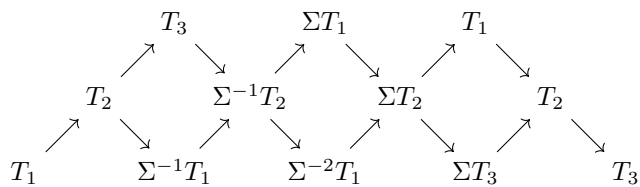

Simple objects in $\mod\cC_{A_3}$ correspond to Auslander--Reiten triangles in $\cC_{A_3}$ (for example, see \cite[Apendix A]{Krause2012}). Namely, every simple object $S$ in $\mod\cC_{A_3}$ admits a presentation 
\[
\begin{tikzcd}
 \Hom_{\cC_{A_3}} (-,Y) \ar[rr, "{\Hom_{\cC_{A_3}} (-,g)}"] && \Hom_{\cC_{A_3}} (-,Z) \ar[r] & S,
\end{tikzcd}
\] 
where 
\[
\begin{tikzcd}
 X \ar[r] & Y \ar[r, "g"] & Z \ar[r] & \Sigma X
\end{tikzcd}
\]
is an Auslander--Reiten triangle in $\cC_{A_3}$. Conversely, every Auslander--Reiten triangle in $\cC_{A_3}$ produces a simple object in $\mod\cC_{A_3}$ in that way. 

There are precisely 2 $\Sigma$-orbits of Auslander--Reiten triangles in $\cC_{A_3}$, with representatives 
\begin{equation}\label{seq1}
    \begin{tikzcd}
 \Sigma T_1 \ar[r] & \Sigma T_2 \ar[r] & T_1 \ar[r] & T_3
\end{tikzcd}
\end{equation}
and 
\begin{equation}\label{seq2}
    \begin{tikzcd}
 \Sigma T_2 \ar[r] & T_1 \oplus \Sigma T_3 \ar[r] & T_2 \ar[r] & \Sigma^{-1} T_2.
\end{tikzcd}
\end{equation}
The first orbit has 6 elements up to isomorphism, corresponding to 6 indecomposable objects on the rim of the Auslander--Reiten quiver of $\cC_{A_3}$, which can be drawn on the M\"obius strip.
The second orbit has 3 elements, corresponding to the objects in the middle. The injective envelope of the simple object $S_1$ corresponding to the triangle \eqref{seq1} is $\Hom_{\cC_{A_3}} (-,T_3)$, the envelope of the simple object $S_2$ corresponding to the triangle \eqref{seq2} is $\Hom_{\cC_{A_3}} (-,\Sigma^{-1} T_2)$. 

Since $\cC_{A_3}$ is  locally finite, taking injective envelopes provides a  bijection between  
simple objects in $\mod \cC_{A_3}$ and indecomposable injective objects in $\Mod \cC_{A_3}$ (see the proof of Proposition \ref{prop:IrrAdditiveFunctions<->InjFinEndolength2}). Thus, we have found all indecomposable injectives in $\Mod \cC_{A_3}$. By Lemma \ref{lem:injFinEndolength}, all injectives in $\Mod \cC_{A_3}$ have finite endolength, so any injective is a coproduct of indecomposable ones. Hence, $\Sigma$-orbits of indecomposable injectives are in bijection with irreducible rank functions (see  Corollary \ref{cor:periodic}), and the irreducible rank functions can be described using the formula from Theorem \ref{thm:FundamentalCorrespondenceCompGen}.

Thus, there are 2 $\Sigma$-irreducible additive functions on $\mod\cC_{A_3}$. The irreducible rank function $\rho_1$ on $\cC_{A_3}$ corresponding to the $\Sigma$-orbit of the first Auslander--Reiten triangle is given by
\[
{\rho_1}_{ob}\coloneqq\length_{\End_{\cC_{A_3}} (I_1)}\Hom_{\cC_{A_3}}(-,I_1), \text{ where } I_1=\bigoplus_{i=0}^5\Sigma^i T_3. 
\]
Similarly, the irreducible rank function $\rho_2$ on $\cC_{A_3}$ corresponding to the $\Sigma$-orbit of the second triangle is given by
\[
{\rho_2}_{ob}\coloneqq\length_{\End_{\cC_{A_3}} (I_2)}\Hom_{\cC_{A_3}}(-,I_2), \text{ where } I_2=\bigoplus_{i=0}^2\Sigma^i T_2. 
\]
These can also be described as follows:
\[
{\rho_1}_{ob}=\dim_k\Hom_{\cC_{A_3}}(-,I_1) \text{ and } 
{\rho_2}_{ob}=\dim_k\Hom_{\cC_{A_3}}(-,I_2). 
\]
Indeed, $\dim_k\Hom_{\cC_{A_3}}(-,I_1)$ induces an additive function on $\mod\cC_{A_3}$ whose values coincide with those of $\widetilde{\rho_1}$ on simple objects (equivalently, on the connecting morphisms of the triangles \eqref{seq1} and \eqref{seq2}). The same holds for $\rho_2$. 

Note that both $\rho_1$ and $\rho_2$ are not localising. Indeed, the values of ${\rho_i}_{ob}$ on objects are given on Figures \ref{fig:my_label_2} and \ref{fig:my_label_3}, respectively. Since they are non-zero on all indecomposable objects, they are non-zero on all objects. Thus, $\Ker\rho_i$ does not contain the identity map of any non-zero object. At the same time, $\Ker\rho_i\neq 0$ for both rank functions: these kernels contain the connecting morphisms of triangles (\ref{seq2}) and (\ref{seq1}), respectively. For example, consider the connecting morphism $f: T_1 \to T_3$ of triangle (\ref{seq1}). We have $\rho_2(f)=\frac{1+1-2}{2}=0$, as illustrated on Figure \ref{fig:my_label_3}. This can be seen explicitly from the triangle 
\[
\begin{tikzcd}
 T_1 \ar[r, "f"] &  T_3 \ar[r] & \Sigma^{-1}T_2 \ar[r] & \Sigma T_1.
 \end{tikzcd}
\]

Every indecomposable object of $\cC_{A_3}$ generates $\cC_{A_3}$. In particular, so does $T_1$, hence $\rho_2$ is prime (${\rho_2}_{ob}(T_1)~=~1$). The sum $\rho\coloneqq\rho_1+\rho_2$ gives the morphism-faithful rank function corresponding to the composition length on the category $\mod\cC_{A_3}$.

\begin{figure}[h]
    \centering
    \begin{tikzpicture}

\node (v1) at (-5,-3.5) {2};
\node (v2) at (-4,-2.5) {2};
\node (v3) at (-3,-1.5) {2};
\node (v5) at (-2,-2.5) {2};
\node (v7) at (-1,-3.5) {2};
\node (v4) at (-3,-3.5) {2};
\node (v6) at (-1,-1.5) {2};
\node (v8) at (0,-2.5) {2};
\node (v10) at (1,-3.5) {2};
\node (v9) at (1,-1.5) {2};
\node (v11) at (2,-2.5) {2};
\node (v12) at (3,-3.5) {2};
\draw [->]  (v1) -- (v2);
\draw [->]  (v2) -- (v3);
\draw [->]  (v2) -- (v4);
\draw [->]  (v3) -- (v5);
\draw [->]  (v4) -- (v5);
\draw [->]  (v5) -- (v6);
\draw [->]  (v5) -- (v7);
\draw [->]  (v7) -- (v8);
\draw [->]  (v6) -- (v8);
\draw [->]  (v8) -- (v9);
\draw [->]  (v8) -- (v10);
\draw [->]  (v9) -- (v11);
\draw [->]  (v10) -- (v11);
\draw [->]  (v11) -- (v12);
\end{tikzpicture}
    \caption{Values of ${\rho_1}_{ob}$ on indecomposable objects of $\cC_{A_3}$.}
    \label{fig:my_label_2}
\end{figure}

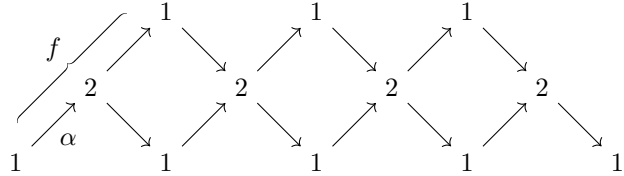
\begin{figure}[h]
    \centering
\begin{tikzpicture}

\node (v1) at (-5,-3.5) {1};
\node (v2) at (-4,-2.5) {2};
\node (v3) at (-3,-1.5) {1};
\node (v5) at (-2,-2.5) {2};
\node (v7) at (-1,-3.5) {1};
\node (v4) at (-3,-3.5) {1};
\node (v6) at (-1,-1.5) {1};
\node (v8) at (0,-2.5) {2};
\node (v10) at (1,-3.5) {1};
\node (v9) at (1,-1.5) {1};
\node (v11) at (2,-2.5) {2};
\node (v12) at (3,-3.5) {1};
\node (v13) at (-4.3,-3.2) {$\alpha$};
\draw [->]  (v1) -- (v2);
\draw [->]  (v2) -- (v3);
\draw [->]  (v2) -- (v4);
\draw [->]  (v3) -- (v5);
\draw [->]  (v4) -- (v5);
\draw [->]  (v5) -- (v6);
\draw [->]  (v5) -- (v7);
\draw [->]  (v7) -- (v8);
\draw [->]  (v6) -- (v8);
\draw [->]  (v8) -- (v9);
\draw [->]  (v8) -- (v10);
\draw [->]  (v9) -- (v11);
\draw [->]  (v10) -- (v11);
\draw [->]  (v11) -- (v12);
\node (v13) at (-5.1,-3.1) {};
\node (v14) at (-3.4,-1.4) {};
\node at (-4.5,-2) {$f$};
\draw [decorate, decoration = {calligraphic brace}] (v13) --  (v14);
\end{tikzpicture}
    \caption{Values of ${\rho_2}_{ob}$ on indecomposable objects of $\cC_{A_3}$.}
    \label{fig:my_label_3}
\end{figure}

The category $\cC_{A_3}$ is algebraic, idempotent complete, and has a generator. Thus, it can be realised as the category $\ct^c$ for some compactly generated triangulated category $\ct$ (see Remark \ref{rem: algebraic,etc.=>compacts}). Therefore, the classes of exact and idempotent rank functions on $\cC_{A_3}$ coincide. Let us check that $\rho_2$ is not idempotent, the fact that it is not exact will then follow by the above. We will check that $f$ cannot be written as $g\circ h$ with both $g$ and $h$ in $\Ker \rho_2$.

For any factorisation $f= g \circ h$ of $f$, the morphism $h:T_1\to \oplus_{i=1}^nX_i$ has components $h_i:T_1\to X_i$ of the form $c_i\id_{T_1}: T_1 \to T_1$, $c_i\alpha:T_1\to T_2$, $c_if:T_1\to T_3$, or $0$ (with $c_i\in k^*$). Note that at least one of the components has to be non-zero. The following inequality holds for any rank function $\rho$: $\rho(h)\geq\rho(h_i)$ for any $i$ (see Remark \ref{rmk:inequalities}). Thus, if $h$ has at least one component of the form $c_i\id_{T_1}$ or $c_i\alpha,$ we have $\rho_2(h)\geq {\rho_2}_{ob}(T_1)=1$ or $\rho_2(h)\geq \rho_2(\alpha)=1$. If all components of $h$ are of the form $c_i f$ or $0$, the map $g$ must have a component of the form $c_i^{-1} \id_{T_3}$, and using the same argument, we get that $\rho_2(g)\geq {\rho_2}_{ob}(T_3) = 1$. In either case, $f$ cannot be decomposed as $f = g\circ h$ with both $g$ and $h$ in $\Ker\rho_2$.

\appendix
\section{Proof of Theorem \ref{thm:lastPreliminaries}}
\label{Appendix A}

In this appendix, we prove Theorem \ref{thm:lastPreliminaries}. The next proposition provides the missing link between Theorem \ref{thm:IrrAdditiveFunctions<->InjFinEndolength} and Theorem \ref{thm:ZieglerSpecLocallyCohCat} in Subsection \ref{subsection lcc}.
 Recall that by $\widetilde{\rho}_H$ we denote the additive function corresponding to an injective object $H$ of finite endolength, as defined in \eqref{eq:addFuncFromObj}. 

\begin{proposition}
	\label{prop:IrrAdditiveFunctions<->InjFinEndolength2}
	Let $\cA$ be a locally coherent category. Suppose that $H$ is an injective object in $\cA$ of finite endolength, let $\widetilde{\rho}$ be an integral additive function on $\fp\cA$ and assume that $\cS$ is a Serre subcategory of $\fp \cA$. The following statements hold:
	\begin{enumerate}
		\item Let $H\cong \coprod_{i\in I} H_i$ be a decomposition of $H$ as a coproduct of indecomposable objects in $\cA$. Each $H_i$ has finite endolength. If $J\subseteq I$ is such that $(H_i)_{i\in J}$ contains each isoclass from $(H_i)_{i\in I}$ exactly once, then the integral additive function $\widetilde{\rho}_H$ can be  expressed 
		as the locally finite sum of irreducible additive functions 
		$\widetilde{\rho}_H=\sum_{i\in J} \widetilde{\rho}_{H_i}$. Moreover $\{[H_i]\mid i\in J\}$ coincides with $\Zg \cA\setminus\co(\Ker\widetilde{\rho}_H)$.
		\item If $\widetilde{\rho}=\sum_{i\in I} \widetilde{\rho}_i$ is a decomposition of $\widetilde{\rho}$ as a locally finite sum of irreducible additive functions and $J\subseteq I$ is such that $(\widetilde{\rho}_i)_{i\in J}$ contains each function from $(\widetilde{\rho}_i)_{i\in I}$ exactly once, then $\Ker\widetilde{\rho}=\Ker\widetilde{\sigma}$ for $\widetilde{\sigma}=\sum_{i\in J} \widetilde{\rho}_i$ and there exists a finite endolength injective object $H$ in $\cA$ satisfying $\widetilde{\sigma}=\widetilde{\rho}_H$.
		\item If $\cA/ \overrightarrow{\cS}$ is locally finite, then there exists an integral additive function $\widetilde{\rho}$ on $\fp \cA$ such that $\cS=\Ker\widetilde{\rho}$.
	\end{enumerate}
\end{proposition}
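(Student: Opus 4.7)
For Part (1), first note that finite endolength and injectivity are both closed under direct summands (see Subsection \ref{subsection lcc}), so each $H_i$ is an indecomposable injective of finite endolength and hence $\widetilde{\rho}_{H_i}$ is irreducible by Theorem \ref{thm:IrrAdditiveFunctions<->InjFinEndolength}. For $W$ in $\fp \cA$, the finitely presented hypothesis gives $\Hom_\cA(W, H) = \bigoplus_{i \in I} \Hom_\cA(W, H_i)$; since this has finite length over $\End_\cA(H)$, only finitely many summands can be nonzero. Grouping by isoclass to write $H \cong \coprod_{j \in J} H_j^{(n_j)}$ and applying Morita equivalence block by block, using the Krull--Schmidt splitting $\End_\cA(H)/\rad \End_\cA(H) \cong \prod_{j \in J} M_{n_j}(D_j)$ with $D_j = \End_\cA(H_j)/\rad$, one obtains the length-additivity identity $\widetilde{\rho}_H(W) = \sum_{j \in J} \widetilde{\rho}_{H_j}(W)$, giving a locally finite decomposition.

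To identify $\{[H_i] : i \in J\}$ with $\Zg \cA \setminus \co(\Ker \widetilde{\rho}_H)$, the inclusion $\subseteq$ is immediate: any nonzero map $X \to H_i$ composes with the summand inclusion $H_i \hookrightarrow H$ to a nonzero map $X \to H$. For the converse, set $\cS := \Ker \widetilde{\rho}_H$ and invoke the localization sequence \eqref{eq:loc}: any $Q$ representing a point of $\Zg \cA \setminus \co(\cS)$ satisfies $\Hom_\cA(\cS, Q) = 0$, so by properties (1) and (3) we have $Q \cong R(\bar Q)$ for some indecomposable injective $\bar Q$ in $\cA/\overrightarrow{\cS}$. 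The object $\bar H := E(H)$ is injective of finite endolength there (properties (3), (4)), and the computation $\widetilde{\rho}_{\bar H}(E(W)) = \widetilde{\rho}_H(W)$ shows that $\Ker \widetilde{\rho}_{\bar H} = 0$. After discarding repetitions in $\bar H$, Corollary \ref{cor:charac_loc_finite_abelian} forces $\cA/\overrightarrow{\cS}$ to be locally finite, so $\bar Q$ is the injective envelope of some simple $S$ in $\fp(\cA/\overrightarrow{\cS})$; triviality of $\Ker \widetilde{\rho}_{\bar H}$ then forces $S$ to embed into some summand $\bar H_i := E(H_i)$, yielding $\bar Q \cong \bar H_i$ and hence $Q \cong H_i$ via the homeomorphism from property (3).

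For Part (2), the equality $\Ker \widetilde{\rho} = \Ker \widetilde{\sigma}$ is immediate from non-negativity and integrality: $\widetilde{\rho}(W) = 0$ iff all $\widetilde{\rho}_i(W) = 0$ iff $\widetilde{\sigma}(W) = 0$. For existence of $H$, write each $\widetilde{\rho}_i = \widetilde{\rho}_{H_i}$ for an indecomposable injective $H_i$ of finite endolength (Theorem \ref{thm:IrrAdditiveFunctions<->InjFinEndolength}) and set $H := \coprod_{i \in J} H_i$. Being a coproduct of fp-injectives, $H$ is fp-injective (Subsection \ref{subsection lcc}); local finiteness of $\widetilde{\sigma}$ makes $\Hom_\cA(W, H) = \bigoplus_{i \in J} \Hom_\cA(W, H_i)$ a finite direct sum of finite-length modules for each $W$ in $\fp \cA$, and the length-additivity identity from Part (1) delivers $\widetilde{\rho}_H = \widetilde{\sigma}$. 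In particular $H$ has finite endolength, hence is $\varSigma$-pure-injective, and combined with fp-injectivity it is injective.

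For Part (3), the locally finite hypothesis makes $\fp(\cA/\overrightarrow{\cS})$ a length category, with composition length $\widetilde{\ell}$ being an integral additive function (Corollary \ref{cor:charac_loc_finite_abelian}). By property (2) of the localization sequence \eqref{eq:loc}, $E|_{\fp \cA} : \fp \cA \to \fp(\cA/\overrightarrow{\cS})$ is exact and agrees with the Serre localization $\fp \cA \to (\fp \cA)/\cS$, so its kernel is $\cS$. Lemma \ref{lem:AddFuncPrecomposedFunctor} then produces the integral additive function $\widetilde{\rho} := \widetilde{\ell}^{E|_{\fp \cA}}$, whose kernel is $\{W : E(W) = 0\} = \cS$. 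The main obstacle across all three parts is the length-additivity identity in Part (1), which is the only step requiring genuine computation with $\End_\cA(H)$ and is a Krull--Schmidt/Morita argument essentially going back to \cite[\S 3]{Crawley-Boevey1994a}; the remainder reduces to systematic use of the localization-sequence properties from Subsection \ref{subsection:fundamental_corresp} and the basic-additive-function criterion of Corollary \ref{cor:charac_loc_finite_abelian}.
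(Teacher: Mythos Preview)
Your treatment of Part~(3) and the kernel equality in Part~(2) match the paper's proof. The substantive difference is how you establish the length-additivity identity $\widetilde{\rho}_H=\sum_{j\in J}\widetilde{\rho}_{H_j}$: you attempt a direct Krull--Schmidt/Morita computation with $\End_\cA(H)$, whereas the paper passes to the locally finite quotient $\cA/\overrightarrow{\cS}$ with $\cS=\Ker\widetilde{\rho}_H$ (invoking Proposition~\ref{prop:LocAdditiveFunctions}) and checks the identity on simple objects there. The paper's observation is that for a simple $L$ in $\fp(\cA/\overrightarrow{\cS})$ with injective envelope $\overline{H}_{i_0}$, both $\Hom(L,\overline{H})$ and $\Hom(L,\overline{H}_{i_0})$ are simple over their respective endomorphism rings, because any nonzero map from a simple into an injective generates the Hom-space; hence both sides evaluate to~$1$ on every simple. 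This avoids any direct analysis of $\End_\cA(H)$.

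Your Morita argument has two gaps. First, the assertion $\End_\cA(H)/\rad\End_\cA(H)\cong\prod_{j\in J}M_{n_j}(D_j)$ is not justified and is problematic when $J$ is infinite; you would need to first reduce to the finite sub-coproduct $H_W$ carrying all of $\Hom_\cA(W,H)$, then argue that the $\End_\cA(H)$- and $\End_\cA(H_W)$-submodule lattices of $\Hom_\cA(W,H)$ agree, and finally carry out the semiperfect-ring length computation for $H_W$. Second, and more seriously, your Part~(2) is circular as written: you invoke ``the length-additivity identity from Part~(1)'' to deduce that $H=\coprod_{i\in J}H_i$ has finite endolength, but your proof of that identity in Part~(1) assumed $H$ was already injective of finite endolength. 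The paper avoids this by passing to the locally finite quotient $\cA/\overrightarrow{\Ker\widetilde{\sigma}}$, where Lemma~\ref{lem:injFinEndolength} guarantees that \emph{every} injective has finite endolength, so the coproduct $\overline{H}=\coprod\overline{H}_i$ is automatically an injective of finite endolength and property~(4) of the localisation sequence transports this back to $H\cong R(\overline{H})$. Your approach can be repaired by proving the length-additivity uniformly for finite coproducts without assuming the total object has finite endolength, but as it stands the localisation route is both cleaner and complete.
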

\begin{proof}
The statement in (1) is an extension of \cite[Proposition 3.1]{Krause2016} (see also \cite[Propositions 4.5, 4.6]{Krause2001}) to the context of locally coherent categories. For completeness, a proof is included. Let $H\cong \coprod_{i\in I} H_i$ be a decomposition of $H$ as a coproduct of indecomposable objects. Note that such a decomposition always exists and that the direct factors $H_i$ are injective of finite endolength. Recall that $\Ker {\widetilde{\rho}}_H$ is a Serre subcategory of $\fp \cA$ and consider the localisation sequence \eqref{eq:loc} for $\cS\coloneqq\Ker {\widetilde{\rho}}_H$. We will use the notation of \eqref{eq:loc}. Recall also that the functor $R$ commutes with filtered colimits and identifies $\cA/\overrightarrow{\cS}$ with the full subcategory of $\cA$ given by all objects $X$ with $\Hom_\cA(\overrightarrow{\cS},X)=0=\Ext_\cA^1(\overrightarrow{\cS},X)$.  If $Y$ lies in $\cS$, then $\widetilde{\rho}_H(Y)=0$, so $\Hom_{\cA}(Y,H)=0$ and $\Hom_\cA(\overrightarrow{\cS},H)=0$. Moreover, $\Ext_\cA^1(\overrightarrow{\cS},H)=0$, as $H$ is injective. The same holds for $H_i$. According to Subsection \ref{subsection:fundamental_corresp}, we must have $H\cong R(\overline{H})$ and $H_i\cong R(\overline{H}_i)$ for injectives of finite endolength $\overline{H}$ and $\overline{H}_i$ in $\cA/\overrightarrow{\cS}$ with $\coprod_{i\in I} \overline{H}_i\cong \overline{H}$. The last isomorphism holds, since the functors $E\circ R$ and $ \id_{\cA/\overrightarrow{\cS}}$ are naturally isomorphic and $E$ commutes with coproducts.
Moreover, using the notation in Lemma \ref{lem:AddFuncPrecomposedFunctor}, we get $\widetilde{\rho}_{\overline{H}}^E=\widetilde{\rho}_H$ and $\widetilde{\rho}_{\overline{H}_i}^E=\widetilde{\rho}_{H_i}$ (see Subsection \ref{subsection:fundamental_corresp} (4)).
Suppose that $(H_i)_{i\in J}$ is a complete irredundant list of indecomposable direct factors of $H$, up to isomorphism. As $R$ is fully faithful, $(\overline{H}_i)_{i\in J}$ contains each isoclass from $(\overline{H}_i)_{i\in I}$ exactly once. Note that by Proposition \ref{prop:LocAdditiveFunctions}, $\fp(\cA / \overrightarrow{\cS})$ is a length category. We claim that $\overline{H}$ cogenerates $\fp(\cA / \overrightarrow{\cS})$ (meaning that for every $Z$ in $\fp(\cA / \overrightarrow{\cS})$ there is a monomorphism in $\cA / \overrightarrow{\cS}$ from $Z$ to a product of copies of $\overline{H}$). In fact, if $X$ lies in $\fp \cA$ and $\Hom_{\cA/\overrightarrow{\cS}}(E(X),\overline{H})=0$, then $\Hom_{\cA}(X,H)=0$, so $X$ belongs to $\cS$ and hence $E(X)=0$. Since every simple in $\fp(\cA/\overrightarrow{\cS})$ is of the form $E(X)$ for $X\in \fp \cA$ (recall Subsection \ref{subsection:fundamental_corresp} (2)), the last observation implies that $\overline{H}$ cogenerates every simple in $\fp(\cA/\overrightarrow{\cS})$ and therefore every object in $\fp(\cA/\overrightarrow{\cS})$. We prove that the functions $\widetilde{\rho}_{\overline{H}}$ and $\sum_{i\in J} \widetilde{\rho}_{\overline{H}_i}$ coincide by showing that both compute the composition length of objects in $\fp (\cA/\overrightarrow{\cS})$. Let $L$ be a simple in $\fp (\cA/\overrightarrow{\cS})$. There exists a unique $i_0\in J$ such that $\overline{H}_{i_0}$ is the injective envelope of $L$. Mimicking the arguments in the proof of Lemma \ref{lem:injFinEndolength}, we conclude that $\Hom_{\cA/\overrightarrow{\cS}}(L,\overline{H}_{i_0})$ is a simple $\End_{\cA/\overrightarrow{\cS}}(\overline{H}_{i_0})$-module. By the same reasoning, $\Hom_{\cA/\overrightarrow{\cS}}(L,\overline{H})$ is a simple $\End_{\cA/\overrightarrow{\cS}}(\overline{H})$-module. Hence
	\[
\sum_{i\in J} \widetilde{\rho}_{\overline{H}_i}(L)= \widetilde{\rho}_{\overline{H}_{i_0}}(L)=1=\widetilde{\rho}_{\overline{H}}(L).
	\]
	This identity implies that $\widetilde{\rho}_{\overline{H}}=\sum_{i\in J} \widetilde{\rho}_{\overline{H}_i}$. Consequently, $\widetilde{\rho}_H=\sum_{i\in J} \widetilde{\rho}_{H_i}$. 
	
	According to \cite[\S 3, p.~523]{Herzog1997}, every indecomposable injective object in a locally coherent category is isomorphic to the injective envelope of some finitely generated object. Because the finitely presented objects in a locally finite category are noetherian, they coincide with the finitely generated ones (we refer to \cite[\S 7, p.~542]{Herzog1997}). In fact, note that taking injective envelopes of finitely presented simple objects yields a bijection between the isoclasses of finitely presented simples and the isoclasses of indecomposable injectives in any locally finite category. Hence, up to isomorphism, every indecomposable injective object in $\cA/\overrightarrow{\cS}$ must be a direct factor of  $\overline{H}$. The set of pairwise non-isomorphic direct factors $\{\overline{H}_i\mid i\in J\}$ of $\overline{H}$ is therefore a complete irredundant set of indecomposable injective objects in $\cA/\overrightarrow{\cS}$, up to isomorphism, and $\Zg (\cA / \overrightarrow{\cS}) =\{[\overline{H}_i]\mid i\in J\}$. We get that $\Zg \cA \setminus \co  (\cS)=\{[H_i]\mid i\in J\}$ and this is a closed subset of $\Zg\cA$ (see Subsection \ref{subsection:fundamental_corresp}).
	
	For part (2), it is clear that $\Ker\widetilde{\rho}=\Ker\widetilde{\sigma}$. By Theorem \ref{thm:IrrAdditiveFunctions<->InjFinEndolength}, there exist pairwise non-isomorphic indecomposable injective objects of finite endolength $H_i$ in $\cA$ such that $\widetilde{\rho}_i=\widetilde{\rho}_{H_i}$ for $i\in J$. Set $H\coloneqq\coprod_{i\in J}H_i$. We claim that $H$ is an injective of finite endolength. Consider the localisation sequence \eqref{eq:loc} for $\Ker\widetilde{\sigma}$. As $\Ker\widetilde{\sigma}=\bigcap_{i\in J}\Ker\widetilde{\rho}_{i}=\bigcap_{i\in J}\Ker{\widetilde{\rho}}_{H_i}$, we have $\Hom_{\cA}(Y,H_i)=0$ for every $Y$ in $\Ker\widetilde{\sigma}$. Hence $H_i\cong R(\overline{H}_i)$ for certain injective objects $\overline{H}_i$ in $\cA/\overrightarrow{\Ker\widetilde{\sigma}}$. As $\cA/\overrightarrow{\Ker\widetilde{\sigma}}$ is locally finite, the coproduct $\overline{H}\coloneqq\coprod_{i\in J}\overline{H}_i$ is an injective of finite endolength (see \cite[Proposition 7.1]{Herzog1997} and Lemma \ref{lem:injFinEndolength}). As before, we get $R(\overline{H})\cong H$ and $H$ must be an injective of finite endolength. From part (1), it follows that $\widetilde{\rho}_H=\sum_{i\in J}\widetilde{\rho}_{H_i}=\widetilde{\sigma}$.
	
	To prove the statement in (3), let $\cS$ be a Serre subcategory of $\fp\cA$ such that $\cA/ \overrightarrow{\cS}$ is locally finite. Recall that the composition length of objects in $\fp(\cA/ \overrightarrow{\cS})$ is an integral additive function with trivial kernel. Consequently, $\Ker\widetilde{\ell}^E=\Ker E = \cS$ for  the localisation $E:\cA \to \cA/\overrightarrow{\cS}$.
\end{proof}

In the proof of the following lemmas, we are going to use results from \cite{Krause1998} which are stated in the language of exactly definable categories.  In order to apply these results to a locally coherent category $\cA$ we use the bijections between locally coherent categories and skeletally small abelian categories from \cite[\S1.4, Theorem]{Crawley-Boevey1994} and between skeletally small abelian categories and exactly definable categories in \cite[Corollary 2.9]{Krause1998}. Going through these bijections identifies, up to equivalence, the corresponding skeletally small abelian category with $\fp\cA$ and the exactly definable category with the subcategory of fp-injective objects in $\cA$. The Ziegler spectrum of an exactly definable category can be identified with the Ziegler spectrum of the corresponding locally coherent category.  

\begin{lemma}\label{lem:IsoClosedIsoQuotient}
Let $\cA$ be a locally coherent category. Let $C$ be a closed subset of $\Zg\cA$ and let $\cS$ be the Serre subcategory of $\fp\cA$ such that $C=\Zg\cA \setminus \co(\cS)$. Then the isolation property holds for $C$ if and only if it holds for $\Zg(\cA/\overrightarrow{\cS})$. 
\end{lemma}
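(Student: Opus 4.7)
The plan is to transport the isolation property along the homeomorphism $\phi\colon \Zg(\cA/\overrightarrow{\cS})\to C$, $[\overline{Q}]\mapsto [R(\overline{Q})]$, recalled in Subsection~\ref{subsection:fundamental_corresp}(3). Since $\phi$ is a bijection between the underlying sets, it suffices to show that, at each point, the two conditions entering the definition of the isolation property correspond to one another. These conditions are of two kinds: a purely topological one (being open in its closure) and an algebraic one (simplicity in a certain Serre quotient).

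First I would address the topological part. Because $C$ is closed in $\Zg\cA$, for any $[Q]\in C$ the closure $\overline{\{[Q]\}}$ in $\Zg\cA$ is contained in $C$ and coincides with the closure of $\{[Q]\}$ inside $C$ with the subspace topology. Since $\phi$ is a homeomorphism onto $C$, the point $\{[\overline{Q}]\}$ is open in $\overline{\{[\overline{Q}]\}}\subseteq \Zg(\cA/\overrightarrow{\cS})$ if and only if $\{[R(\overline{Q})]\}$ is open in $\overline{\{[R(\overline{Q})]\}}\subseteq C$, which is exactly the same as being open in its closure in $\Zg\cA$.

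Second, I would establish that the Serre localisations appearing in the isolation property at $[Q]=[R(\overline{Q})]$ and at $[\overline{Q}]$ are equivalent categories. Set
\[
\cK\coloneqq \Ker\bigl(\Hom_\cA(-,R(\overline{Q}))|_{\fp\cA}\bigr)\quad\text{and}\quad \overline{\cK}\coloneqq \Ker\bigl(\Hom_{\cA/\overrightarrow{\cS}}(-,\overline{Q})|_{\fp(\cA/\overrightarrow{\cS})}\bigr).
\]
The $(E,R)$-adjunction gives, for every $W$ in $\fp\cA$, a natural isomorphism $\Hom_\cA(W,R(\overline{Q}))\cong \Hom_{\cA/\overrightarrow{\cS}}(E(W),\overline{Q})$. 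In particular, $\cS\subseteq \cK$, and $W\in\cK$ if and only if $E(W)\in\overline{\cK}$. Using the equivalence $(\fp\cA)/\cS\simeq \fp(\cA/\overrightarrow{\cS})$ from Subsection~\ref{subsection:fundamental_corresp}(2) together with the standard iterated Serre quotient identification $(\fp\cA)/\cK\simeq \bigl((\fp\cA)/\cS\bigr)/(\cK/\cS)\simeq \fp(\cA/\overrightarrow{\cS})/\overline{\cK}$, these two Serre quotients are equivalent as abelian categories, hence one has a simple object if and only if the other does.

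Combining the two steps and ranging over all points of $C$ (equivalently, of $\Zg(\cA/\overrightarrow{\cS})$), the isolation property holds for $C$ if and only if it holds for $\Zg(\cA/\overrightarrow{\cS})$. The main obstacle is bookkeeping for the Serre-quotient identification: one must verify that $\cK$ is precisely the full preimage of $\overline{\cK}$ under $E|_{\fp\cA}\colon\fp\cA\twoheadrightarrow \fp(\cA/\overrightarrow{\cS})$ so that the iterated Serre quotient can be applied; everything else is formal transport along the homeomorphism $\phi$.
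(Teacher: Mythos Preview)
Your proof is correct and follows the same overall strategy as the paper: transport the topological condition along the homeomorphism $\phi$ induced by $R$, and then show the two Serre quotients appearing in the isolation condition are equivalent. The topological half is identical to the paper's.

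For the algebraic half, your execution is slightly more economical than the paper's. The paper proves the stronger statement that the big locally coherent categories $(\cA/\overrightarrow{\cS})/\overrightarrow{\cS_{\overline{Q}}}$ and $\cA/\overrightarrow{\cS_Q}$ are equivalent, by explicitly constructing exact functors in both directions via the universal properties of the three Serre localisations $E$, $E_Q$, $E_{\overline{Q}}$ and checking they are mutually quasi-inverse; the equivalence on finitely presented objects is then deduced. You instead stay at the level of $\fp\cA$ and invoke the standard iterated Serre quotient identification $(\fp\cA)/\cK\simeq\bigl((\fp\cA)/\cS\bigr)/(\cK/\cS)$, after observing from the $(E,R)$-adjunction that $\cK$ is the full preimage of $\overline{\cK}$ under $E|_{\fp\cA}$ (whence also $\cS\subseteq\cK$, and $\cK/\cS$ corresponds to $\overline{\cK}$ under the equivalence $(\fp\cA)/\cS\simeq\fp(\cA/\overrightarrow{\cS})$ since $E|_{\fp\cA}$ is essentially surjective). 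This is a cleaner route to exactly what is needed; the paper's argument, on the other hand, yields the equivalence of the ambient locally coherent categories as a byproduct, though that extra information is not used elsewhere.
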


\begin{proof}
According to Subsection \ref{subsection:fundamental_corresp} and the notation therein, the functor $R:\cA/\overrightarrow{\cS}\to \cA$ induces a homeomorphism from $\Zg (\cA/\overrightarrow{\cS})$ to $C$. Hence, a point $\{[\overline{Q}]\}\subseteq\Zg (\cA/\overrightarrow{\cS})$ is open in its closure in $\Zg (\cA/\overrightarrow{\cS})$ if and only if $\{[R(\overline{Q})]\}\subseteq C$ is open in its closure in $C$. Since $C$ is closed, the latter condition holds exactly when $\{[R(\overline{Q})]\}$ is open in its closure in $\Zg \cA$. As $R$ induces a homeomorphism, every point $[Q]$ in $C$ such that $\{[Q]\}$ is open in its closure in $\Zg \cA$ is of the form $[Q]=[R(\overline{Q})]$ for some point $[\overline{Q}]$ in $\Zg (\cA/\overrightarrow{\cS})$ such that $[\overline{Q}]$
is open in its closure, and such special points $[Q]$ in $C$ and $[\overline{Q}]$ in $\Zg (\cA/\overrightarrow{\cS})$ are in bijection. Now take $[\overline{Q}]$ in $\Zg (\cA/\overrightarrow{\cS})$ and let $Q\coloneqq R(\overline{Q})$ be a representative of the corresponding element in $C$. Consider the Serre subcategories $\cS_{\overline{Q}}\coloneqq \Ker(\Hom_{\cA/\overrightarrow{\cS}} (-,\overline{Q})|_{\fp (\cA/\overrightarrow{\cS})})$ of $\fp(\cA/\overrightarrow{\cS})$ and  $\cS_Q\coloneqq \Ker(\Hom_\ca (-,Q)|_{\fp \cA})$ of $\fp \cA$. We claim that the categories $(\cA/\overrightarrow{\cS})/\overrightarrow{\cS_{\overline{Q}}}$ and $\cA/ \overrightarrow{S_Q}$ are equivalent. This will imply that $\fp(\cA/\overrightarrow{\cS})/ \cS_{\overline{Q}}$ and $(\fp\cA)/\cS_{Q}$ are also equivalent categories and so one of them contains a simple object if and only if the other does. That, together with the observations at the beginning of the paragraph, then proves that the isolation property holds for $\Zg(\cA/\overrightarrow{\cS})$ if and only if it holds for the closed set $C$ of $\Zg \cA$. To show that $(\cA/\overrightarrow{\cS})/\overrightarrow{\cS_{\overline{Q}}}$ and $\cA/ \overrightarrow{S_Q}$ are equivalent, consider the Serre localisation functors $E_Q:\cA \to \ca/\overrightarrow{\cS_Q}$, $E_{\overline{Q}}: \cA/\overrightarrow{\cS} \to (\cA/\overrightarrow{\cS}))/ \overrightarrow{\cS_{\overline{Q}}}$ and $E: \cA \to \cA/\overrightarrow{\cS}$, where $E$ is left adjoint to $R$. We show first that $E_Q$ factors through $E$. From the definition of $\co(\cS)$ and the fact that $[Q]\in C$ we get $\Hom_{\cA}(\cS,Q)=0$.  Therefore $\cS \subseteq \cS_Q$ and consequently $\overrightarrow{\cS}\subseteq \overrightarrow{\cS_Q}$. By the universal property of $E$, there is an exact functor $F:\cA/\overrightarrow{\cS} \to \cA/\overrightarrow{\cS_Q}$ such that $E_Q\cong F \circ E$. We now prove that $F$ factors through $E_{\overline{Q}}$. Recall that $R$ is fully faithful and note that
\[ X \text{ in } \cS_{\overline{Q}} \Rightarrow \Hom_{\cA/\overrightarrow{\cS}}(X,\overline{Q})=0=\Hom_{\cA}(R(X),R(\overline{Q})) \Rightarrow R(X) \text{ in } \cS_Q.\]
If $X$ is an object in $\overrightarrow{\cS_{\overline{Q}}}$, then $R(X)$ lies in $\overrightarrow{\cS_Q}$ as $R$ preserves filtered colimits, and so $F(X)\cong F ( E \circ R (X))\cong E_Q(R(X))=0$. Therefore $\overrightarrow{\cS_{\overline{Q}}} \subseteq \Ker F$ and the universal property of $E_{\overline{Q}}$ guarantees the existence of an exact functor $G:(\cA/\overrightarrow{\cS})/\overrightarrow{\cS_{\overline{Q}}} \to \cA/ \overrightarrow{\cS_Q}$ satisfying $F\cong G\circ E_{\overline{Q}}$. Lastly, we check that $E_{\overline{Q}}\circ E$ factors through $E_Q$. Note that
\[
X \text{ in } \cS_Q \Rightarrow\Hom_{\cA}(X, R(\overline{Q}))=0=\Hom_{\cA/\overrightarrow{\cS}}(E(X), \overline{Q}) \Rightarrow E(X) \text{ in } \cS_{\overline{Q}}.
\]
As before, if $X$ is an object in $ \overrightarrow{\cS_Q}$, then $E(X)$ is in $\overrightarrow{\cS_{\overline{Q}}}$, as $E$ is a left adjoint and therefore commutes with all colimits. Consequently, $\overrightarrow{\cS_Q} \subseteq  \Ker (E_{\overline{Q}}\circ E)$. By the universal property of $E_Q$, there exists an exact functor $H: \cA/ \overrightarrow{\cS_Q} \to (\cA/\overrightarrow{\cS})/ \overrightarrow{\cS_{\overline{Q}}}$ such that $E_{\overline{Q}} \circ E\cong H \circ E_Q$. Standard computations show that $G\circ H \circ E_Q\cong E_Q$ and $H \circ G \circ E_{\overline{Q}}\cong E_{\overline{Q}}$, so $G \circ H\cong \id_{\cA/\overrightarrow{\cS_Q}}$ and $H \circ G\cong \id_{(\cA/\overrightarrow{\cS})/ \overrightarrow{\cS_{\overline{Q}}}}$, as $E_{Q}$ and $E_{\overline{Q}}$ are localisations. Therefore, $G$ and $H$ are mutually quasi-inverse equivalences.
\end{proof}

\begin{lemma}\label{lem:IsolationLocFin}
Let $\cA$ be a locally coherent category. The following statements are equivalent:
\begin{enumerate}[(a)]
    \item $\cA$ is locally finite;
    \item $\Zg \cA$ is discrete and satisfies the isolation property;
    \item $\Zg \cA$ is discrete and the coproduct of representatives of all its elements is $\varSigma$-pure-injective.
\end{enumerate}
\end{lemma}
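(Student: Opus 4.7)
The plan is to prove (a)$\Rightarrow$(b), (a)$\Rightarrow$(c), (b)$\Rightarrow$(a), and (c)$\Rightarrow$(a). The first two implications are direct, while the converses require deeper input.

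For (a)$\Rightarrow$(b) and (a)$\Rightarrow$(c): In a locally finite $\cA$, injectives coincide with fp-injectives by \cite[Proposition 7.1]{Herzog1997}, and this class is closed under coproducts. Hence $H\coloneqq\coprod_{[Q]\in\Zg\cA}Q$ is injective, and by Lemma \ref{lem:injFinEndolength} of finite endolength, so $\varSigma$-pure-injective. For discreteness, indecomposable injectives correspond bijectively to simple objects of $\fp\cA$ via the injective envelope; if $Q=E(L)$, then $\co(\{L\})=\{[Q]\}$, since any nonzero map $L\to Q'$ embeds $L$ into the socle of $Q'$, forcing $Q'\cong E(L)=Q$. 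For the isolation property at any $[Q]$, the Serre quotient $(\fp\cA)/\cS_Q$ with $\cS_Q\coloneqq\Ker(\Hom_\cA(-,Q)|_{\fp\cA})$ is a quotient of a length category by a Serre subcategory, hence a length category; it is nonzero, since $\Hom_\cA(L,Q)\neq 0$ for any simple $L$ with $Q=E(L)$, and therefore contains a simple object.

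For (c)$\Rightarrow$(a): Under the hypotheses, $H$ is both fp-injective and $\varSigma$-pure-injective, hence injective, and every $H^{(I)}$ is injective. The definable subcategory of $\cA$ generated by $H$ has Ziegler support $\Zg\cA$, so it coincides with $\cA$ itself as a definable subcategory. A standard argument for $\varSigma$-pure-injective generators (in the spirit of \cite{Crawley-Boevey1994}, \cite{Krause1998}) shows that under discreteness of the Ziegler support, every pure-injective object in such a definable subcategory is a coproduct of direct summands of $H$; in particular, $\cA$ has no superdecomposable injectives, and every injective object of $\cA$ is a coproduct of copies of the $Q_i$'s, hence a direct summand of some $H^{(I)}$, and therefore itself $\varSigma$-pure-injective. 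Consequently, arbitrary coproducts of injectives in $\cA$ are injective, and by \cite[Proposition 7.1]{Herzog1997} the category $\cA$ is locally finite. Alternatively, one can invoke Lemma \ref{lem:convInjFinEndolength} once one observes that the indecomposable summands of $H$ are of finite endolength.

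For (b)$\Rightarrow$(a): This implication will follow from \cite[Theorem 11.2]{Krause1998}, after reconciling our version of the isolation property with Krause's. The translation goes through the equivalence between locally coherent abelian categories and exactly definable additive categories from \cite[Corollary 2.9]{Krause1998}, under which fp-injectives on one side correspond to the exactly definable category on the other; the Serre localisations of $\fp\cA$ at $\cS_Q$ correspond on the exactly definable side to the appropriate quotients in which simple objects can be detected. The main obstacle of the entire proof lies in this translation and in the decomposition step in (c)$\Rightarrow$(a); both rely on controlling pure-injective decompositions inside definable subcategories with discrete Ziegler spectrum.
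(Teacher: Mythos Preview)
Your forward implications (a)$\Rightarrow$(b) and (a)$\Rightarrow$(c) are correct and pleasantly direct; the paper's own proof of (a)$\Rightarrow$(b) simply cites \cite{Krause1998} rather than arguing elementarily via simples and their injective envelopes as you do.

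Both of your converse arguments, however, have genuine gaps.

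For (c)$\Rightarrow$(a): the decomposition step --- that every injective in $\cA$ is a coproduct of the indecomposable summands of $H$ --- is asserted as ``standard'' but not proved, and it is essentially equivalent to $\cA$ being locally noetherian. This is exactly the hard content the paper extracts from $\varSigma$-pure-injectivity of $H$ via \cite[\S3.5, Theorem 2]{Crawley-Boevey1994}, \cite[Lemma 3.1]{Krause1997} and \cite[Proposition 9.3]{Krause1998}. Worse, even granting the decomposition, your final step does not yield the conclusion: closure of injectives under coproducts characterises locally \emph{noetherian} Grothendieck categories, not locally finite ones, and \cite[Proposition 7.1]{Herzog1997} does not state the converse you invoke. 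Your alternative via Lemma \ref{lem:convInjFinEndolength} fails too: $\varSigma$-pure-injectivity of $H$ does not force its indecomposable summands to have finite endolength, and in any case that lemma requires \emph{every} injective to have finite endolength, not only the indecomposable ones.

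For (b)$\Rightarrow$(a): the citation of \cite[Theorem 11.2]{Krause1998} appears to be off; from the paper's own references that result concerns the absence of superdecomposables, which is the wrong direction. The paper instead appeals to \cite[Corollary 12.12]{Krause1998}, identifying the Krull--Gabriel dimension of $\fp\cA$ with the Cantor--Bendixson rank of $\Zg\cA$ under the isolation property; discreteness then forces both to vanish, so $\fp\cA$ is a length category.

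The paper's overall route is thus different from yours: it proves (a)$\Leftrightarrow$(b) via the Krull--Gabriel/Cantor--Bendixson equality, (a)$\Rightarrow$(c) directly as you do, and (c)$\Rightarrow$(b) by showing that $\varSigma$-pure-injectivity of $H$ makes $\fp\cA$ noetherian, whence the isolation property follows from \cite[Proposition 12.3(3), Corollary 12.5]{Krause1998}. Discreteness is used only at the final step (b)$\Rightarrow$(a), and no decomposition of arbitrary injectives is ever needed.
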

\begin{proof}
By \cite[Corollary 12.12]{Krause1998} and by the correspondences in \cite[Theorem 2.8, Corollary 2.9]{Krause1998}, the Krull--Gabriel dimension of $\fp\cA$ coincides with the Cantor--Bendixson rank of $\Zg \cA$ whenever $\Zg \cA$ satisfies the isolation property. By definition, the Cantor--Bendixson rank of a topological space is zero if and only if it is discrete. Similarly, the Krull--Gabriel dimension of a skeletally small abelian category is zero if and only if all its objects have finite length. In particular, under the assumption that $\Zg\ca$ has the isolation property, the discreteness of $\Zg\cA$ forces a locally coherent category $\cA$ to be locally finite. Conversely, one can prove directly that the Ziegler spectrum of a locally finite category is discrete and satisfies the isolation property, but this also follows from \cite[Corollaries 12.5, 12.12]{Krause1998}. This proves that (a) and (b) are equivalent. When $\cA$ is locally finite, the class of injective objects is closed under coproduts and all injectives have finite endolength (recall Lemma \ref{lem:injFinEndolength}). Consequently, (a) implies (c). Suppose now that the coproduct of the elements in $\Zg \cA$ is $\varSigma$-pure-injective. By \cite[\S3.5, Theorem 2]{Crawley-Boevey1994}, \cite[Lemma 3.1]{Krause1997} and \cite[Proposition 9.3]{Krause1998}, all objects in $\fp\cA$ must be noetherian. It then follows from \cite[Proposition 12.3(3), Corollary 12.5]{Krause1998} that $\Zg \cA$ has the isolation property. This shows that (c) implies (b).
\end{proof}

\begin{proof}[Proof of Theorem \ref{thm:lastPreliminaries}]
The bijection between (a) and (b) in the statement of Theorem \ref{thm:lastPreliminaries} follows from parts (1) and (2) of Proposition \ref{prop:IrrAdditiveFunctions<->InjFinEndolength2} and from Theorem \ref{thm:IrrAdditiveFunctions<->InjFinEndolength}. According to Theorem \ref{thm:ZieglerSpecLocallyCohCat} and Subsection \ref{subsection:fundamental_corresp}, (c) and (d) are in one-to-one correspondence. Consider now the assignment $\widetilde{\rho}\mapsto \overrightarrow{\Ker \widetilde{\rho}}$. By Proposition \ref{prop:LocAdditiveFunctions}, this provides a well-defined correspondence between (b) and (c). To see that this mapping is injective, note that any two distinct basic additive functions on $\fp \cA$ give rise to different open sets in $\Zg \cA$ (by part (1) of Proposition \ref{prop:IrrAdditiveFunctions<->InjFinEndolength2}) and use Theorem \ref{thm:ZieglerSpecLocallyCohCat}. Surjectivity follows from part (3) of Proposition \ref{prop:IrrAdditiveFunctions<->InjFinEndolength2}. To prove the bijection between (d) and (e), suppose that $\cS$ is a Serre subcategory of $\fp\cA$. By Lemma \ref{lem:IsolationLocFin}, $\cA/\overrightarrow{\cS}$ is locally finite if and only if $\Zg(\cA/\overrightarrow{\cS})$ is discrete and satisfies the isolation property.
Since $\Zg(\cA/\overrightarrow{\cS})$ and $\Zg(\cA)\setminus \co(\cS)$ are homeomorphic, Lemma \ref{lem:IsoClosedIsoQuotient} implies that the previous conditions hold if and only if the closed set $\Zg\cA\setminus\co(\cS)$ is discrete and has the isolation property. Together with Theorem \ref{thm:ZieglerSpecLocallyCohCat}, this shows that the assignment from (d) to (e) is well-defined and bijective.
\end{proof}

\section{Group-valued rank functions}
\label{appendix:group_valued}
 In this appendix, we provide a generalisation of Theorem \ref{thm:TransInvariantAddFunctions<->rankFunctions} that covers $d$-periodic rank functions considered in \cite{Chuang2021}. Throughout this section, $\cC$ is a skeletally small triangulated category. Furthermore, we work in the following setup.

\begin{assumption} \label{assumption:preordered}
 Let $Z$ be a module over a ring $R$ whose underlying abelian group $(Z, +)$ is endowed with a translation-invariant partial order $\leq$, that is, $x\leq y$ implies $x+ z \leq y + z$ for all $x, y, z \in Z$. Let $q$ be a fixed element in $R$.
 \end{assumption}
  
  One elementary example is  $Z=\mathbb{Z}$ or  $Z=\mathbb{R}$, considered as a module over $R=\mathbb{Z}$. 
 Another typical example is $Z = \mathbb{R}[x]/(x^d - 1)$ considered as a module over the polynomial ring $R=\mathbb{R}[x]$, with $R$ acting on $Z$ by multiplication. The partial order on $\mathbb{R}[x]/(x^d - 1)$ is induced from the partial order on $\mathbb{R}[x]$ after expressing all the elements in the standard basis. The partial order on $\mathbb{R}[x]$ is given as follows: $f(x)\leq g(x)$ if $g(x)-f(x)\in\mathbb{R}_{\geq 0}[x]$.
 
Setup \ref{assumption:preordered} is sufficient to define $Z$-valued counterparts of all three types of functions considered in Subsection \ref{subsec:RankFun<->SigmaInvAddFun}, with $q$ appearing in the suitable replacement of the translation invariance property.   
 
 \begin{definition}\label{def:app1}
 A \emphbf{$q$-$\Sigma$-invariant $Z$-valued additive function} $\widetilde{\rho}$ on $\mod \cC$ is an assignment of an element of $Z_{\geq 0}$ to each object in $\mod \cC$, which is additive on short exact sequences and satisfies $\widetilde{\rho}({\Sigma}^*F)=q \widetilde{\rho}(F)$ for every $F$ in $\mod \cC$.
 \end{definition}
 
 \begin{definition}[{cf.~Definition \ref{def:rankfunction}}]\label{def:app2} 
 A \emphbf{$Z$-valued $q$-rank function} $\rho$ on $\cC$ is an assignment of an element $\rho(f) \in Z$ to each morphism $f$ in $\cC$, so that $\rho$ satisfies axioms (M1), (M2) and (M3), together with the following additional condition:
 \begin{itemize}
     \item [(M4q)] \emphbf{$q$-$\Sigma$-invariance:} $\rho(\Sigma f) = q\rho(f)$ for every morphism $f$ in $\cC$.
 \end{itemize}
 \end{definition}
 
 \begin{definition}[{cf.~Definition \ref{def:rankfunctionobj}}]\label{def:app3} 
 A \emphbf{$Z$-valued $q$-rank function on objects} of $\cC$ is an assignment $\rho_{ob}$ of an element $\rho_{ob}(X)\in {Z}$ to every object $X$ in $\cC$, so that $\rho_{ob}$ satisfies axioms (O1), (O2), and the following additional conditions:
	\begin{enumerate}
	   \item[(O3q)] \emphbf{Congruence:} for every $f:X \to Y$ in $\cC$ there exists $z_f\in Z_{\geq 0}$ such that 
	   \[
	   \rho_{ob}(Y)-\rho_{ob}(\Cone f)+\rho_{ob}(\Sigma X)=(q+1)z_f;\]
	   \item[(O4q)] \emphbf{$q$-$\Sigma$-invariance:} $\rho_{ob}(\Sigma X) = q\rho_{ob} (X)$ for every $X$ in $\cC$.
	\end{enumerate}
 \end{definition}
 
 We say that an element $r \in R$ is \emphbf{$Z$-regular} if the left multiplication by $r$ yields an injective map $Z\to Z$.
 Note that $z_f$ in (O3q) is unique if $q + 1$ is regular. By abuse of notation, in the theorem below we will write $\frac{\rho_{ob}(Y)-\rho_{ob}(\Cone f)+\rho_{ob}(\Sigma X)}{q + 1}$ instead of $z_f$. It is not difficult to deduce that a $Z$-valued $q$-rank function on objects satisfies the triangle inequality whenever $q+1$ is $Z$-regular. To see this, apply (O3q) to $f$ and $\Sigma f$ and use $q$-$\Sigma$-invariance.
 
 \begin{theorem}[{cf.~Theorem~\ref{thm:TransInvariantAddFunctions<->rankFunctions},~\cite[Propositions 2.4, 2.8, 2.12]{Chuang2021}}] 
 \label{thm:GroupValuedCorrespondence} Let $q+1\in R$ be $Z$-regular. There is a bijective correspondence between:
	\begin{enumerate}[(a)]
		\item $q$-$\Sigma$-invariant $Z$-valued additive functions $\widetilde{\rho}$ on $\mod \cC$;
		\item  $Z$-valued $q$-rank functions $\rho$ on $\cC$;
		\item $Z$-valued $q$-rank functions  $\rho_{ob}$ on objects of $\cC$.
	\end{enumerate}
	
	The assignments between (a) and (b), and (b) and (c) are given by:
	\begin{align*}
	\widetilde{\rho} \mapsto \rho: \qquad &\rho(f) \coloneqq \widetilde{\rho}(\Imm \Hom_\cC(-,f)); \\ 
	\rho \mapsto \widetilde{\rho}: \qquad &\widetilde{\rho}(F) \coloneqq \rho(f)\text{ for }  F\cong \Imm \Hom_\cC(-,f);\\
	\rho \mapsto \rho_{ob}: \qquad &\rho_{ob}(X) \coloneqq \rho(\id_X);\\
	\rho_{ob} \mapsto \rho: \qquad &\rho(f: X \to Y) \coloneqq \frac{\rho_{ob}(Y)-\rho_{ob}(\Cone f)+\rho_{ob}(\Sigma X)}{q + 1}.
	\end{align*}
	\end{theorem}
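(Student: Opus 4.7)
The plan is to follow the proof of Theorem \ref{thm:TransInvariantAddFunctions<->rankFunctions} almost verbatim, with $Z$-regularity of $q+1$ taking over the role that division by $2$ played in the real-valued setting. I will establish the bijections (a)$\leftrightarrow$(b) and (b)$\leftrightarrow$(c) separately.

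For the bijection (a)$\leftrightarrow$(b), I would start with $\widetilde{\rho}$ as in (a) and define $\rho(f) \coloneqq \widetilde{\rho}(\Imm\Hom_\cC(-,f))$. Axioms (M1)--(M3) follow from the discussion at the start of Subsection \ref{Subsecaddtorank}, and (M4q) is immediate from $\Imm\Hom_\cC(-,\Sigma f) \cong \Sigma^*\Imm\Hom_\cC(-,f)$ combined with $\widetilde{\rho}(\Sigma^*F) = q\widetilde{\rho}(F)$. For the inverse, the computation \eqref{eq:PropPreRankFunctions} inside Lemma \ref{lem:PropPreRankFunctions} remains valid and, combined with (M4q), yields the key formula $(q+1)\rho(f) = \rho(\id_Y) - \rho(\id_{\Cone f}) + \rho(\id_{\Sigma X})$. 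The octahedral argument of Lemma \ref{lem:PropPreRankFunctions}(2) then shows $(q+1)\rho(f) = (q+1)\rho(f')$ whenever $\Imm\Hom_\cC(-,f) \cong \Imm\Hom_\cC(-,f')$, so $Z$-regularity of $q+1$ permits cancellation and $\widetilde{\rho}(F) \coloneqq \rho(f)$ is well-defined. The $q$-$\Sigma$-invariance of $\widetilde{\rho}$ is inherited from (M4q); additivity is established via the $3\times 5$ diagram \eqref{eq:3x5diagram}, whose chase gives $(q+1)(\widetilde{\rho}(F_1) - \widetilde{\rho}(F) + \widetilde{\rho}(F_2)) = 0$, and a final invocation of $Z$-regularity completes the step.

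For the bijection (b)$\leftrightarrow$(c), the map $\rho \mapsto \rho_{ob}$ with $\rho_{ob}(X) \coloneqq \rho(\id_X)$ directly yields (O1), (O2) and (O4q). For (O3q), the same rotation trick as above gives $(q+1)\rho(f) = \rho_{ob}(Y) - \rho_{ob}(\Cone f) + \rho_{ob}(\Sigma X)$, so $z_f \coloneqq \rho(f) \in Z_{\geq 0}$ witnesses (O3q). In the reverse direction, $Z$-regularity makes the formula $\rho(f) \coloneqq \frac{\rho_{ob}(Y) - \rho_{ob}(\Cone f) + \rho_{ob}(\Sigma X)}{q+1}$ unambiguous, and (M1) follows from (O3q). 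A short calculation based on (O4q) and the triangle $X \xrightarrow{\id_X} X \to 0 \to \Sigma X$ gives $\rho(\id_X) = \rho_{ob}(X)$, settling one round trip; the identity displayed above settles the other. Axiom (M2) reduces to additivity of $\rho_{ob}$ and of $\Cone$ on direct sums, and (M4q) to a termwise application of (O4q) in the numerator. For (M3), given a triangle $X \xrightarrow{f} Y \xrightarrow{g} Z \to \Sigma X$, substituting the defining formula into $\rho(f) + \rho(g)$ and using $\Cone f \cong Z$, $\Cone g \cong \Sigma X$ together with (O4q) collapses the sum to $\rho_{ob}(Y) = \rho(\id_Y)$.

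The principal obstacle I anticipate is bookkeeping rather than conceptual: one must carefully track the factor $q+1$ through each place where the original proof divided by $2$, and invoke $Z$-regularity only at the final cancellation. No new categorical input beyond Subsection \ref{Subsecaddtorank} is needed.
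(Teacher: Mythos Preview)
Your proposal is correct and follows essentially the same route as the paper: the (a)$\leftrightarrow$(b) bijection is obtained by rerunning the proof of Theorem~\ref{thm:TransInvariantAddFunctions<->rankFunctions} with $q+1$ in place of $2$ and invoking $Z$-regularity at the cancellation steps, while the (b)$\leftrightarrow$(c) bijection is handled exactly as in \cite[Propositions 2.4, 2.8, 2.12]{Chuang2021} via $\rho_{ob}(X)=\rho(\id_X)$ and the formula $(q+1)\rho(f)=\rho_{ob}(Y)-\rho_{ob}(\Cone f)+\rho_{ob}(\Sigma X)$. One cosmetic remark: the argument you call ``octahedral'' in Lemma~\ref{lem:PropPreRankFunctions}(2) actually rests on \cite[Lemma A.1]{Krause2016} rather than the octahedral axiom, but this does not affect the validity of your outline.
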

	
	\begin{proof}
	To show the bijection between (a) and (b), follow the proof of Theorem~\ref{thm:TransInvariantAddFunctions<->rankFunctions} word for word, replacing $\Sigma$-invariance by $q$-$\Sigma$-invariance and $2$ by $q + 1$ in \eqref{eq:additiveToRank}, and adapting \eqref{eq:qTransInv}.
	For the bijection between (b) and (c), one may follow the strategy of proofs in  \cite[Propositions 2.4, 2.8, 2.12]{Chuang2021}. We sketch the proof for completeness. 
	Let $\rho$ be a $Z$-valued $q$-rank function on $\cC$.
	Straightforward verification shows that the map $\rho_{ob}$, defined by $\rho_{ob}(X)\coloneqq\rho(\id_X)$, satisfies axioms (O1) and (O2). The identity $\Sigma \id_X=\id_{\Sigma X}$ implies that $\rho_{ob}$ is $q$-$\Sigma$-invariant. Finally, Lemma \ref{lem:PropPreRankFunctions} guarantees that $\rho_{ob}$ is congruent. Conversely, let $\rho_{ob}$ be a $Z$-valued $q$-rank function on objects of $\cC$. Consider the map $\rho$ given by $\rho(f)\coloneqq z_f$ for $f:X \to Y$ in $\cC$ and $z_f\in Z_{\geq 0}$ such that $(q+1)z_f=\rho_{ob}(Y)-\rho_{ob}(\Cone f)+\rho_{ob}(\Sigma X)$. It is not difficult to show that the map $\rho$ is $q$-$\Sigma$-invariant and additive: this follows from the fact that $\rho_{ob}$ is $q$-$\Sigma$-invariant and additive and that $q+1$ is $Z$-regular. In order to check the rank-nullity condition, consider a triangle
	\[
	\begin{tikzcd}
	X \ar[r, "f"] & Y \ar[r,"g"] & Z \ar[r,"h"] & \Sigma X.
	\end{tikzcd}
	\]
	Note that $(q+1)z_f=\rho_{ob}(Y)-\rho_{ob}(Z)+\rho_{ob}(\Sigma X)$ and $(q+1)z_g=\rho_{ob}(Z)-\rho_{ob}(\Sigma X)+\rho_{ob}(\Sigma Y)$, hence
	\[(q+1)(z_f+z_g)=\rho_{ob}(Y)+\rho_{ob}(\Sigma Y)=(q+1)z_{\id_Y}.
	\]
	As $q+1$ is $Z$-regular, we obtain $\rho(f)+\rho(g)=\rho(\id_Y)$.
	\end{proof}

    \begin{example}\label{ex:app1}
    Take $Z = \mathbb{R}$ considered as a module over $R = \mathbb{Z}$, and take $q = 1$. Definitions \ref{def:app1}, \ref{def:app2} and \ref{def:app3} recover $\Sigma$-invariant additive functions and rank functions considered in Section \ref{sec:RankFunctions}. If we consider $Z = \mathbb{Z}$ instead, Definitions \ref{def:app1} and \ref{def:app2} correspond to the integral case from Section \ref{sec:Integral}.
    \end{example}
    
    \begin{example}
    Let $R$ be the polynomial ring $\mathbb{R}[x]$ and let $Z$ be its quotient $\mathbb{R}[x]/(x^d - 1)$, with $R$ acting on $Z$ by multiplication. Take $q=x$. The element $x + 1 \in \R[x]$ is $Z$-regular if and only if $d$ is odd.  In this case, $Z$-valued $q$-rank functions are precisely the $d$-periodic rank functions considered in \cite{Chuang2021}. The bijection between (b) and (c) in Theorem \ref{thm:GroupValuedCorrespondence} recovers \cite[Propositions 2.8, 2.12]{Chuang2021}. Taking $d=1$, we recover Example \ref{ex:app1}.
    The notion of a $d$-periodic rank function is defined for even $d$ as well (see \cite[Definition 2.6]{Chuang2021}). However, the regularity of $q + 1$ is crucial in the proof of  Theorem~\ref{thm:GroupValuedCorrespondence}. Note that for even $d$, $Z$-valued $q$-rank functions may differ from $d$-periodic rank functions.
    \end{example}
    
    \begin{example}
    Let $R = Z$ be the Laurent polynomial ring $\mathbb{R}[x, x^{-1}]$ acting on itself by multiplication, and take $q=x$. The element $x + 1$ is $Z$-regular. In this case, $Z$-valued $q$-rank functions are precisely the $\infty$-periodic rank functions considered in \cite{Chuang2021}. Again, the bijection between (b) and (c) in Theorem \ref{thm:GroupValuedCorrespondence} recovers \cite[Propositions 2.8, 2.12]{Chuang2021}. For a ring $A$, there is a one-to-one correspondence between $\infty$-periodic rank functions $\rho$ on $\cper (A)$ satisfying $\rho(\id_A) = 1$ and Sylvester rank functions
    (see \cite[Theorem 3.7]{Chuang2021}).
\end{example}

Note that each $d$-periodic or $\infty$-periodic rank function on $\cC$ induces a usual, i.e. $1$-periodic rank function on $\cC$ (see \cite[Remark 2.7]{Chuang2021}). In particular, each Sylvester rank function induces a rank function on $\cper (A)$.

\bibliographystyle{alpha}
\bibliography{biblio}

\end{document}